
\documentclass[aap]{imsart}

\RequirePackage{amsthm,amsmath,amsfonts,amssymb}
\RequirePackage[numbers]{natbib}

\startlocaldefs

\newcommand{\ac}[1]{\textcolor{red}{add citation}}

\usepackage{enumerate}
\usepackage{color}
\usepackage{multirow}\usepackage{graphicx}
\usepackage{amsfonts}
\usepackage{amsmath}
\usepackage{amssymb}

\usepackage{color}
\usepackage{dsfont}


\usepackage{pgfplots}
\usepackage{subfigure}
\usetikzlibrary{patterns}
\usepackage{amssymb}
\usepackage{graphicx}
\usepackage{amsmath}
\usepackage{bm}
\usepackage{algorithm} 
\usepackage{algorithmic}  
\usepackage[algo2e]{algorithm2e} 
\usepackage{caption}

\usetikzlibrary{decorations.pathreplacing}

\usetikzlibrary{calc,patterns}


\usepackage{url}
\usepackage{fancyhdr}
\usepackage{indentfirst}

\usepackage{enumerate}
\usepackage{dsfont}
\usepackage[british]{babel}
\usepackage[colorlinks=true,citecolor=blue]{hyperref}
\usepackage{amsthm}
\usepackage{color}

\usepackage{comment}
\usepackage{tikz-cd}
\usepackage{graphicx}
\usepackage{amsfonts}
\usepackage{amsmath}
\usepackage{amssymb}
\usepackage{url}
\usepackage{bbm}
\usepackage{fancyhdr}
\usepackage{indentfirst}
\usepackage{enumerate}
\usepackage{dsfont}
\usepackage[british]{babel}
\pgfplotsset{compat=1.7}
\usepackage[colorlinks=true,citecolor=blue]{hyperref}
\usepackage{cleveref}
\usepackage{amsthm}
\usepackage{color}
\renewcommand{\cite}{\citet}
\usepackage{comment}


\usepackage{algorithm}


\renewcommand{\d}{\,\mathrm{d}}
\newcommand{\dd}{\overset{\mathrm{law}}{=}}
\newcommand{\p}{\mathbb{P}}
\newcommand{\q}{\mathbb{Q}}
\newcommand{\qn}{\mathbb{Q}^{(n)}}
\newcommand{\bt}{\mathbf t}

\usepackage{xcolor}

\newcommand{\E}{\mathbb{E}}    
\newcommand{\R}{\mathbb{R}}    
\newcommand{\N}{\mathbb{N}}    


 

\theoremstyle{plain}
\newtheorem{theorem}{Theorem}
\newtheorem{corollary}[theorem]{Corollary}
\newtheorem{lemma}[theorem]{Lemma}
\newtheorem{proposition}[theorem]{Proposition}

\theoremstyle{definition}

\newtheorem{example}[theorem]{Example}
\newtheorem{conjecture}{Conjecture}

\newtheorem{remark}{Remark}

\usepackage{tikz}
\usetikzlibrary{arrows.meta,positioning}

\newcommand{\ee}{\varepsilon}

\newcommand{\n}[1]{\left\lVert#1\right\rVert}

\newcommand{\bz}{{\mathbf{z}}}
\newcommand{\bxi}{{\boldsymbol{\xi}}}
\newcommand{\bXi}{{\boldsymbol{\Xi}}}
\newcommand{\bzeta}{{\boldsymbol{\zeta}}}
\newcommand{\bth}{{\boldsymbol{\theta}}}

\newcommand{\tnu}{\widetilde{\nu}}

\newcommand{\bv}{\mathbf{v}}
\newcommand{\bu}{\mathbf{u}}
\newcommand{\bU}{\mathbf{U}}
\newcommand{\bV}{\mathbf{V}}
\newcommand{\bS}{\mathbb{S}}

\newcommand{\bx}{\mathbf{x}}
\newcommand{\by}{\mathbf{y}}
\newcommand{\bX}{\mathbf{X}}
\newcommand{\bY}{\mathbf{Y}}
\newcommand{\bJ}{\mathbf{J}}
\newcommand{\bl}{c_2}
\newcommand{\bc}{\mathbf{c}}
\newcommand{\bla}{\boldsymbol\lambda}

\renewcommand{\tnu}{\widetilde{\nu}}
\newcommand{\tl}{\widetilde{\kappa}}

\renewcommand{\bth}{\boldsymbol{\theta}}

\newcommand{\bn}{\mathbf{n}}

\newcommand{\bpsi}{\boldsymbol{\psi}}

\newcommand{\bet}{\boldsymbol{\eta}}

\def\d{\mathrm{d}}

\newcommand{\bone}{ {\mathbbm{1}} }
\renewcommand{\S}{\mathbb{S}}
\renewcommand{\bS}{\mathbf S}

\usepackage{mathrsfs}

\newcommand{\z}{\mathbf{0}}

\newcommand{\K}{\mathbb{K}}

\renewcommand{\le}{\leqslant}
\renewcommand{\geq}{\geqslant}
\renewcommand{\leq}{\leqslant}
\renewcommand{\epsilon}{\varepsilon}

\newcommand{\rrm}{{classical} BRW}

\endlocaldefs

\begin{document}

\begin{frontmatter}
\title{On the First Passage Times of Branching Random Walks \\ in {$\R^{\lowercase{d}}$}}

\runtitle{First Passage Times of Branching Random Walks}

\begin{aug}

\author[A]{\fnms{Jose}~\snm{Blanchet}\ead[label=e1]{jose.blanchet@stanford.edu}},
\author[B]{\fnms{Wei}~\snm{Cai}\ead[label=e2]{caiwei@stanford.edu}}
,
\author[B]{\fnms{Shaswat}~\snm{Mohanty}\ead[label=e3]{shaswatm@stanford.edu}}
\and
\author[C]{\fnms{Zhenyuan}~\snm{Zhang}\ead[label=e4]{zzy@stanford.edu}}
\address[A]{Department of Management Science and Engineering, Stanford University, CA 94305-4040, USA\printead[presep={,\ }]{e1}}

\address[B]{Department of Mechanical Engineering, Stanford University, CA 94305-4040, USA\printead[presep={,\ }]{e2,e3}}
\address[C]{Department of Mathematics, Stanford University, CA 94305-4040, USA\printead[presep={,\ }]{e4}}
\end{aug}

\begin{abstract}
We study the first passage times of discrete-time branching random walks in ${\mathbb R}^d$ where $d\geq 1$. Here, the genealogy of the particles follows a supercritical Galton-Watson process. 
We provide asymptotics of the first passage times to a ball of radius one at a distance $x$ from the origin, conditioned upon survival. 
We explicitly provide the linear dominating term and the logarithmic correction term as functions of $x$. 
The asymptotics are precise up to an order of $o_{\mathbb P}(\log x)$ for general jump distributions and up to $O_{\mathbb P}(\log\log x)$ for spherically symmetric jumps. 
A crucial ingredient of both results is the tightness of first passage times. 
We also discuss an extension of the first passage time analysis to a modified branching random walk model that has been proven to successfully capture shortest path statistics in polymer networks.

\end{abstract}

\begin{keyword}[class=MSC]
\kwd[Primary ]{60G70}
\kwd{60J80}
\kwd{60J85}
\kwd[; secondary ]{60G50}
\end{keyword}

\begin{keyword}
\kwd{Branching random walk}
\kwd{first passage time}
\end{keyword}

\end{frontmatter}


\section{Introduction}
We consider a discrete-time branching random walk (BRW) on $\R^d$, where particles in $\R^d$ perform independent random walks until they randomly branch into more particles or die. 
Spatial branching processes, including BRW and its continuous-time sibling branching Brownian motion (BBM), have a long history and a wide range of applications in ecology, population biology, and modeling epidemics (\citep{ermakova2019branching,fisher1937wave,kolmogorov1937etude,konig2020branching,kot2004stochasticity,wang1980convergence}). 
The study of the extremal behavior of spatial branching processes has gained increasing attention over recent years. The mathematical techniques applied in those studies are related to developments in numerous other fields, e.g., 
discrete Gaussian free field \citep{bramson2016convergence2,ding2014extreme},
random unitary matrices \citep{arguin2017maximum},
Riemann zeta function \citep{arguin2017maxima},
random multiplicative functions \citep{harper2020moments},
spin glasses \citep{bovier2017gaussian}, 
holomorphic multiplicative chaos \citep{gu2024universality}, 
among many others that exhibit a log-correlated structure. We refer to \citep{shi2015branching,zeitouni2016branching} for lecture notes on BRW and related topics.

The study of the extrema for one-dimensional BBM traces back to \citep{bramson1978maximal}, which established a precise asymptotic for the maximum as a function of time. For general branching random walks, \citep{addario2009minima} provided a precise asymptotic in terms of the jump distribution of the process. Finer behavior near the frontier is also well understood since the works of \citep{aidekon2013convergence,bramson2016convergence,lalley1987conditional,madaule2017convergence}. 
Nevertheless, less is known in dimensions greater than one. To name a few examples, there are studies of the maximum norm for BBM (\citep{berestycki2024extremal,kim2023maximum,kim2024shape,mallein2015maximal,stasinski2021derivative}) and very recently of BRW (\citep{bezborodov2023maximal}), as well as the range of spatial branching processes (\citep{kyprianou2002asymptotic,zhang2024large}). These works in higher dimensions typically assumed that the increment distribution is spherically symmetric. A notable exception is \citep{uchiyama1982spatial}, which provided estimates on the number of particles in a linearly shifted ball in time for a continuous-time version of the BRW. 

Our primary focus is on the first passage times (FPT) for branching random walks in $\R^d$. By definition, the FPT is the first time for particles to reach a prescribed region. For BBM, \citep{zhang2024modeling} established a precise asymptotic formula (up to $O_\p(1)$) for the FPT; see \eqref{eq:bbm} below. 
In this paper, we provide FPT asymptotics for BRW that may not have a spherically symmetric jump distribution (while the case of spherically symmetric jumps is of special interest for our applications; see Section \ref{sec:appl}). The main results are explained in Section \ref{sec:main results} and their proofs are presented in Section \ref{app:proof_thm1}.
 Applications to polymer physics will be briefly discussed in Section \ref{sec:appl} and further detailed in Section \ref{sec:newbrwproof}.

 We take slightly different routes depending on whether the jump distribution is spherically symmetric or not. In the symmetric case, a key part is establishing asymptotic upper and lower bounds on the number of particles that are near the frontier for a one-dimensional BRW (Proposition \ref{prop:number of particles}), where our approach follows the modified second moment method of \citep{bramson2016convergence}. In the non-spherically symmetric case, our argument relies heavily on the theory of random walks in cones \citep{denisov2015random}. In both settings, we prove the tightness around the median of the FPT (Theorem \ref{theorem:concentration}) using independent techniques, building upon ideas from \citep{mcdiarmid1995minimal}. A more detailed proof sketch can be found in Section \ref{sec:proof stratgey}.

\subsection{Notation and assumptions} \label{sec:fpt_for_brw}

Let us start with a formal definition of the branching random walk (BRW). We consider a discrete-time BRW model with offspring distribution $\{p_i\}_{i\geq 0}$. This means that at each time step, each particle is independently replaced by $i$ particles at the same location with probability $p_i$. The particles in each generation have i.i.d.~displacements. The case of $i=0$ corresponds to the particle being terminated. Let $\rho=\sum_i i\,p_i$ be the mean of the offspring distribution. We will always assume the supercritical case $\rho>1$. 
The  $d$-dimensional random walk increment is given by $\bxi$, and the first particle starts from the origin $\z\in\R^d$. Typically, in this paper, a bold symbol refers to a vector. 

Our goal is to study the asymptotics of the FPTs of the BRW. For $x\in\R$, we let $B_x$ denote a ball of radius one\footnote{Replacing the radius one by any fixed positive constant does not change the analysis; the same proof goes through.} centered at $(x,0,\dots,0)$ in $\R^d$. We let $V_n$ denote the collection (i.e., set) of particle locations at time step $n$, and  $\{\bet_{v,n}(k)\}_{0\leq k\leq n}$ denotes the $d$-dimensional random walk that leads to $v\in V_n$. We define the FPT  $\tau_x$  of the BRW to $B_x$ as
$$\tau_x=\min\{n\geq 0 : \exists\, v\in V_n,~ \bet_{v,n}(n)\in B_x\}.$$

In what follows, we will separately analyze the cases depending on whether the law of $\bxi$ is spherically symmetric or not.\footnote{A probability distribution on $\R^d$ is \emph{spherically symmetric} if it is invariant under any orthonormal transformation. Equivalent definitions can be found in Theorem 2.5 of \citep{fang2018symmetric}.} In both cases, we impose the following assumptions on the BRW:
 \begin{itemize}
     \item [(A1)] the offspring distribution has a finite second moment, i.e., $\sum_i i^2\,p_i<\infty$;\label{A1}
     \item [(A2)] the law of $\bxi$ is integrable and centered, i.e., $\E[\bxi]=\z$.
 \end{itemize}
For a stochastic process $\{T_x\}_{x\geq 0}$ and a positive deterministic map $\{t_x\}_{x\geq 0}$, we write $T_x=O_\p(t_x)$ if the collection $\{T_x/t_x\}_{x\geq 0}$ is tight, and   $T_x=o_\p(t_x)$ if $T_x/t_x\to 0$ in probability.

 \subsubsection{Spherically symmetric jumps}\label{sec:notations-sphsym}
Denote the first coordinate of $\bxi$ by $\xi^{(1)}$, which is a real-valued random variable. We  introduce the large deviation rate function
\begin{align}
    I(x):=\sup_{\lambda>0}\Big(\lambda x-\log\phi_{\xi^{(1)}}(\lambda)\Big),\label{eq:ratef}
\end{align} 
where $\phi_{\xi^{(1)}}(\lambda):=\E[e^{\lambda{\xi^{(1)}}}]$ is the moment generating function for ${\xi^{(1)}}$.
In this case, we make the following assumptions on $\bxi$:
\begin{itemize}
    \item [(A3)] the law of $\bxi$ is spherically symmetric in $\R^d$, and its radial component $R=|\bxi|$ satisfies $\p(R=0)<1$;
    \item [(A4)] $\log\rho\in(\mathrm{ran}I)^\circ$, where $(\mathrm{ran}I)^\circ$ is the interior of the range of $I$.  In other words, there exists $c_1 > 0$ such that $I(c_1) = \log \rho$.  It can be shown that $c_1\in (\mathrm{ran}(\log\phi_{\xi^{(1)}})')^\circ$.
   \end{itemize}
    Note that under assumptions (A2) and (A3), $I$ is $C^1$ and convex in its domain, and attains a unique global minimum at $x=0$.\footnote{See e.g., Theorem 26.3 of \citep{rockafellar1970convex}.}  We will also see from Lemma \ref{lemma:edgeworthok} below that (A3) implies that both $\bxi$ and ${\xi^{(1)}}$ are non-lattice if $d\geq 2$.\footnote{We say the law of $\bxi$ is \emph{non-lattice} if there is no $\by\in\R^d$ such that the support of $\bxi+\by$ is contained in a discrete subgroup of $\R^d$.} 
 A consequence of (A4) is that $\rho>1$, and hence the particles form a supercritical Galton-Watson process. 
  Denote by $\bl:=I'(c_1)$  the value of $\lambda$ at which the supremum in \eqref{eq:ratef} is taken for $x=c_1$. In particular, (A4) implies that $\phi_{\xi^{(1)}}$ is well-defined in a neighborhood of $c_2$. The constants $c_1$ and $c_2$ here depend only on the offspring distribution (through $\rho$) and the increment distribution (through $\phi_{\xi^{(1)}}(\lambda)$) and will be fixed throughout this paper.

\subsubsection{Non-spherically symmetric jumps}

Denote by $\phi_\bxi(\bla)=\E[e^{\bla\cdot\bxi}]$ the moment generating function for $\bxi$. 
Let 
\begin{align}
    I(\bx):=\sup_{\bla\in\R^d}\Big(\bla\cdot\bx-\log\phi_\bxi(\bla)\Big)=\sup_{\bla\in\R^d}\Big(\bla\cdot\bx-\log\E[e^{\bla\cdot\bxi}]\Big)\label{eq:I long}
\end{align}
denote the large deviation rate function for $\bxi$. We impose the following assumptions:
\begin{itemize}
    \item [(A5)] $\bxi$ satisfies the strong non-lattice condition $\limsup_{|\bt|\to\infty}|\E[e^{i\bt\cdot \bxi}]|<1$;
    \item [(A6)] $\log\rho\in(\mathrm{ran}I(\cdot,\z))^\circ$, where $I(\cdot,\z)$ refers to the function $I$ with the last $d-1$ variables fixed at zero; let $\widehat{c}_1>0$ satisfy $I(\widehat{c}_1,\z)=\log\rho$, then $(\widehat{c}_1,\z)\in (\mathrm{ran}\nabla\log\phi_\bxi)^\circ$.
   \end{itemize}
   Denote by $\bc_2=\nabla I(\widehat{c}_1,\z)$, which is the value of $\bla$ where the supremum \eqref{eq:I long} is attained at $\bx=(\widehat{c}_1,\z)$. The intuition behind the vector $\bc_2$ is explained by Figure \ref{fig:nonsphere} below. 
The assumption (A5) is crucial for the non-degeneracy of the jumps. More precisely, (A5) implies that the support of $\bxi$ cannot be contained in a proper subspace of $\R^d$. This is necessary for the BRW to be able to reach the target ball $B_x$ and shows that $d$ is the `real' dimension of the increments. A consequence of (A6) is that $\phi_\bxi$ is well-defined in a neighborhood of $\bc_2$. We will see in Proposition \ref{prop:c1} that (A3) and (A4) together imply (A6) with $\widehat{c}_1=c_1$.

\begin{figure}
    \centering
    \includegraphics[width=0.7\textwidth]{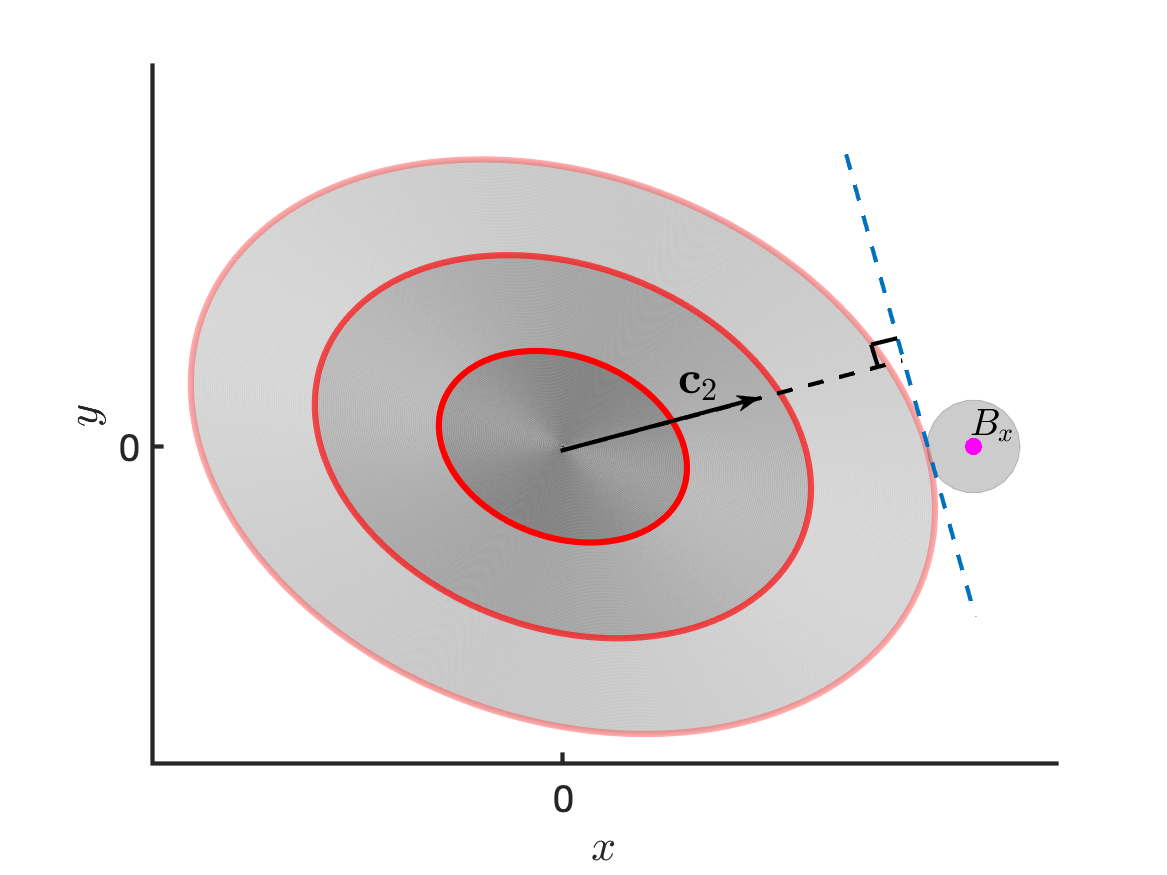}
    \caption{A two-dimensional visualization of the vector $\bc_2$. The range of a (non-spherically symmetric) BRW roughly grows linearly in time (illustrated with the shaded ellipses); at time $\tau_x$, the ellipse is nearly tangent to the target ball $B_x$ (shaded disk). Roughly speaking, the vector $\bc_2$ is normal to the tangent line. }
    \label{fig:nonsphere}
\end{figure}

\subsection{Main results}
\label{sec:main results}

In the following, we fix a dimension $d\geq 1$. Consider first the case of spherically symmetric jumps. 
Let us define
\begin{align}
    A(x) := \frac{x}{c_1}+\frac{d+2}{2\bl c_1}\,\log x.
    \label{eq:A}
\end{align}
For a finite set $G$, we use $\#G$ to denote its cardinality. Denote by $S:=\{\forall n\geq 1,~\#V_n>0\}$ the \emph{survival event} that the underlying branching process survives at all times. Since $\rho>1$, we have $p:=\p(S)>0$.
\begin{theorem}\label{thm:main}
Assume (A1)--(A4). Conditional upon survival, it holds that
\begin{align}
    \tau_x=A(x)+O_\p(\log\log x)=\frac{x}{c_1}+\frac{d+2}{2\bl c_1}\,\log x+O_\p(\log\log x).\label{eq:tauxasymp}
\end{align}
In other words, the collection $\{(\tau_x-A(x))/\log\log x\}_{x>0}$ is tight.
\end{theorem}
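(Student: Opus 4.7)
The plan is to sandwich $\tau_x$ between $A(x)-C\log\log x$ and $A(x)+C\log\log x$ with high probability conditional on $S$. Fix $C>0$ large and set $n_+=\lfloor A(x)+C\log\log x\rfloor$. Via the many-to-one identity and a $d$-dimensional local CLT / Edgeworth expansion for sums of $\bxi_i$ under the exponential tilt with parameter $(c_2,\z)$, I would establish the first-moment asymptotic
$$\E[\#(V_{n_+}\cap B_x)]\;\asymp\;n_+^{-d/2}\exp\!\bigl(c_2(c_1 n_+-x)\bigr)\;\asymp\;x\,(\log x)^{c_1 c_2 C},$$
which is polynomially large. A many-to-two / most-recent-common-ancestor decomposition, using (A1), controls the second moment by $C_0\,\E[\#(V_{n_+}\cap B_x)]^2$, so that Paley--Zygmund gives $\p(\tau_x\le n_+)\ge p_0>0$ uniformly in $x$. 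To boost positive probability to $1-\epsilon$, I would first evolve the BRW until an initial time $\Delta=C'\log\log x$ with $\rho^\Delta\gg 1$, yielding $\#V_\Delta\gtrsim\rho^\Delta$ on $S$ (Kesten--Stigum, valid under (A1)); the essentially independent subtrees rooted at $V_\Delta$ each have conditional probability at least $p_0/2$ of sending a descendant into $B_x$ within further time $n_+$, so the probability that none does is bounded by $(1-p_0/2)^{\rho^\Delta}=o(1)$.

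\textbf{Lower bound.} Set $n_-=\lceil A(x)-C\log\log x\rceil$. The direct first-moment bound $\sum_{k\le n_-}\E[\#(V_k\cap B_x)]$ is already of order $x\,(\log x)^{-c_1 c_2 C}$, diverging polynomially in $x$, so something finer is needed. The plan is a Bramson-style barrier truncation: restrict attention to particles $v\in V_k$ whose longitudinal trajectories $\pi_1(\bet_{v,k}(j))$ stay below a curved barrier $c_1 j - \tfrac{d+2}{2 c_2}\log\bigl(j\wedge(n_-{-}j)\bigr)+K$ for every $j\le k$. With this restriction, the many-to-one identity reduces the first moment to a single exponentially tilted random walk subject to a ballot-type constraint, and a Mogulskii/ballot estimate supplies an additional factor of order $1/n_-$, cutting the restricted first moment down to $o(1)$. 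A separate first-moment bound on trajectories that cross the barrier before time $n_-$---essentially the $1$D BRW maximum tail estimate applied to $\pi_1$---shows that such crossings contribute $o_K(1)$, so that the restriction is essentially free and the two bounds together give $\p(\tau_x\le n_-)=o(1)$.

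\textbf{Main obstacle.} The hardest step is the lower bound. Unlike the $1$D maximum problem, FPT to $B_x$ requires simultaneous control of the longitudinal coordinate (which must reach the slab around $x$) and the $d-1$ transverse coordinates (which must return to an $O(1)$ neighborhood of $\z$). Under the tilt $(c_2,\z)$, the transverse components are asymptotically Gaussian with variance of order $n_-$, so the local CLT at $(x,\z)$ acquires an additional $n_-^{-(d-1)/2}$ factor on top of the $1$D Bramson prefactor $n_-^{-3/2}$ produced by the ballot estimate; these combine to yield the $n_-^{-(d+2)/2}$ scaling, and hence the coefficient $(d+2)/(2 c_1 c_2)$ of $\log x$ appearing in $A(x)$. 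Calibrating the $d$-dimensional barrier so that the transverse CLT and the ballot estimate combine cleanly to precision $O_\p(\log\log x)$ (rather than the cruder $O_\p(\log x)$ that a naive LDP argument would yield) requires quantitative $d$-dimensional ballot/Mogulskii estimates, and this is where the technical work concentrates; the $\log\log x$ scale is precisely what is needed to compensate the polynomial size $\sim x$ of $\E[\#(V_{A(x)}\cap B_x)]$.
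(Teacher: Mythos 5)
Your sketch correctly identifies the coefficient $\tfrac{d+2}{2c_1c_2}$ and the heuristic behind it (the $n^{-3/2}$ ballot factor combined with the $n^{-(d-1)/2}$ transverse local CLT factor), but the route you propose is not the one the paper follows, and there is a genuine gap in the central step.

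\textbf{The second-moment claim is the crux and is unjustified.} You assert that for the raw count $\Delta:=\#(V_{n_+}\cap B_x)$ the many-to-two decomposition gives $\E[\Delta^2]\le C_0\,\E[\Delta]^2$. Without a barrier truncation this is false already in dimension one: the unrestricted second moment of the count near the frontier is dominated by rare ballistic ancestral paths and is not comparable to $\E[\Delta]^2$. If you add a longitudinal barrier the first moment drops from $x(\log x)^{C'}$ to $(\log x)^{C'}$, and you now face the $d$-dimensional second moment with a ballot constraint on the first coordinate \emph{and} a transverse constraint at the terminal time; controlling the split-time sum there is exactly the difficulty the paper highlights. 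Indeed, Section 1.3 of the paper explicitly states that ``the second moment method does not immediately apply if one counts the number of particles in the target ball $B_x$ at a certain time,'' and the concluding remarks explain that if one tries to run the second moment in $\R^d$ with a half-space barrier (the natural choice if one aims for $O_\p(\log\log x)$ precision), the relevant sums do not converge; a strict cone $\K_\alpha$ with $p_{\alpha,d}=1\pm\epsilon$ makes them converge but only yields $o_\p(\log x)$ (this is precisely the content of Theorem \ref{thm:main long}). For spherically symmetric jumps the paper sidesteps the $d$-dimensional second moment entirely: Proposition \ref{prop:number of particles} carries out a \emph{one-dimensional} Bramson-type second moment for particles near the longitudinal frontier, and Proposition \ref{prop:ulb gaussian approx} then projects onto the transverse coordinates, taking advantage of the tilted independence. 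The upper bound in Section \ref{sec:proof} then evolves $\asymp x^{(d-1)/2}$ particles from an intermediate time $(\log x)^2$ and shows each independently lands in $B_x$ with probability $\gg x^{-(d-1)/2}$. None of this is a minor variant of Paley--Zygmund on $\#(V_{n_+}\cap B_x)$.

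\textbf{Two smaller issues.} (1) Your lower-bound barrier $c_1j-\tfrac{d+2}{2c_2}\log(j\wedge(n_--j))+K$ has a steeper logarithmic correction than the $\tfrac{3}{2c_2}$ curve from the one-dimensional maximum, so the ``1D BRW maximum tail estimate'' (Lemma \ref{lemma:beyondcurve}) does not directly bound the probability of crossing it; you would need a separate argument. Moreover, because $\tau_x$ is a first passage time to a ball rather than a level set, particles may cross level $x$ in the first coordinate and return, which your barrier setup does not address. The paper's lower bound instead decomposes by the first longitudinal crossing time $T_{v,n}(x)$ and the transverse location at that time, and then uses Proposition \ref{prop:number of particles}, Proposition \ref{prop:ulb gaussian approx}(iii), and a large-deviation bound to dispose of each summand. (2) Your Kesten--Stigum boosting from $\p(\tau_x\le n_+)\ge p_0$ to $1-\epsilon$ does not account for the fact that the subtrees at time $\Delta$ start from random positions (an $O(\sqrt{\log\log x})$ displacement that shifts the target ball in a random direction), nor does it handle the correlation through the conditioning on survival. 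The paper instead relies on the tightness result, Theorem \ref{theorem:concentration}, which is proved separately and is a key structural ingredient; your proposal does not mention it.
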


A natural question is whether the $O_\p(\log\log x)$ can be replaced by a $O_\p(1)$, which we later formulate as Conjecture \ref{conjecture}.\footnote{We remark that the conjecture has been resolved by \citep{blanchet2024tightness} after the first version of this paper was posted; see Remark \ref{rem:conj}.} We discuss a few pieces of evidence that support this conjecture, and further aspects will be discussed in Section \ref{sec:conclusion}. First, this is the case for $d=1$ (Proposition \ref{prop:1dfpt} below) and for the branching Brownian motion in $\R^d$ (Theorem 1 of \citep{zhang2024modeling}). A second piece of evidence is the following tightness result on first passage times that is also central to our proof of Theorem \ref{thm:main}. 

\begin{theorem}
    \label{theorem:concentration}

    For $r\in(0,1)$ and $x>0$, let $t^{(r)}_x$ be the $r$-th quantile of the first passage time $\tau_x$, i.e., $t^{(r)}_x=\inf\{t: \p(\tau_x\leq t\mid S)\geq r\}$. Assume (A1), (A2), and (A6). There exist constants $C,c>0$ independent of $x$ such that for each $y\in[0,x]$,
    \begin{align}
        \p\left(|\tau_x-t^{(1/2)}_x|>y\mid S\right)\leq Ce^{-cy}.\label{o}
    \end{align}
    In particular, conditioned upon survival, the collection of laws $\{\tau_x-t^{(r)}_x\}_{x>0}$ is tight for each $r\in(0,1)$.
\end{theorem}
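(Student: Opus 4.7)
My plan is to control the upper tail $\p(\tau_x > t^{(1/2)}_x + y \mid S)$ and the lower tail $\p(\tau_x < t^{(1/2)}_x - y \mid S)$ separately. The lower tail follows from a direct first-moment/Chernoff estimate on space-time visits to $B_x$. The upper tail exploits the branching structure by restarting at time $y$: on survival, the resulting exponentially many i.i.d.\ sub-BRWs offer independent ``attempts'' to reach $B_x$, and a positive fraction of them each succeed within time $t^{(1/2)}_x$ with at least constant probability, so the joint failure probability decays exponentially in $y$.

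For the lower tail, a union bound over space-time visits combined with the Chernoff inequality at tilting parameter $\bc_2$ yields, using the Legendre identity $\log \phi_\bxi(\bc_2) = \widehat c_1 (\bc_2)_1 - \log\rho$ to make the sum geometric,
\[
\p(\tau_x \leq t) \;\leq\; \sum_{n=0}^{\lfloor t \rfloor} \rho^n\, \p\bigl(\bxi_1 + \dots + \bxi_n \in B_x\bigr) \;\leq\; C_2\,e^{-\kappa(x/\widehat c_1 - t)}, \qquad t \leq x/\widehat c_1,
\]
with $\kappa := (\bc_2)_1\,\widehat c_1 > 0$ independent of $x$. Since $\p(\tau_x \leq t^{(1/2)}_x) \geq p/2$ by definition of the median on survival, this standalone bound forces $t^{(1/2)}_x \geq x/\widehat c_1 - O(1)$; substituting $t = t^{(1/2)}_x - y$ and dividing by $p = \p(S)$ then gives $\p(\tau_x \leq t^{(1/2)}_x - y \mid S) \leq C\,e^{-c y}$ (the bound being trivial for $y = O(1)$, and valid from the displayed inequality otherwise).

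For the upper tail, define the good event $G_y := S \cap \{\#V_y \geq \rho^{y/2}\} \cap \{\max_{v \in V_y}\|\bet_{v,y}(y)\| \leq C_1 y\}$. Standard Galton--Watson concentration under (A1) and a Chernoff union bound on the at most $\rho^y$ walks under (A6) yield $\p(G_y^c \mid S) \leq C e^{-c y}$ for appropriate $C_1$. By the branching Markov property at time $y$, conditional on $\F_y$ the sub-BRWs $\{T_v\}_{v\in V_y}$ are independent, and writing $\sigma_v$ for the FPT of $T_v$ to $B_x$ one has $\tau_x \leq y + \min_{v \in V_y}\sigma_v$ with $\sigma_v \overset{\mathrm{law}}{=} \tau_{B_x - \bet_{v,y}(y)}$ given $\F_y$. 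Among the $v \in V_y$, a positive fraction --- namely those with $\bet_{v,y}(y)\cdot \bc_2 \geq 0$, a fraction $\geq 1/4$ with high probability by a CLT/second-moment argument --- satisfies $\p(\sigma_v \leq t^{(1/2)}_x \mid \F_y) \geq p/2$. The product of these independent failure probabilities is then $(1 - p/2)^{c \rho^{y/2}} \leq e^{-c y}$ on $G_y$, which combined with the bound on $G_y^c$ completes the upper tail.

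The main obstacle is this last uniform positivity: establishing $\p(\tau_{B_x - \by} \leq t^{(1/2)}_x) \geq p/2$ for ``good'' shifts $\by$ with $\by\cdot\bc_2 \geq 0$ and $\|\by\| \leq C_1 y$. In the spherically symmetric case, rotational invariance reduces this to the scalar monotonicity of $x\mapsto \tau_x$. In the general setting under (A6), one exploits the convex-analytic role of $\bc_2$ (cf.\ Figure~\ref{fig:nonsphere}): for $\by$ with $\by\cdot\bc_2 \geq 0$, shifting the target by $-\by$ moves it into a region with rate function $I(\cdot,\z)$ no larger, which ought to translate --- through a coupling argument with the optimally tilted measure --- into the stochastic dominance $\tau_{B_x - \by} \lst \tau_{B_x}$ required.
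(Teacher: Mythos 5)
Your lower-tail Chernoff bound --- union over generations, tilt at $\bc_2$, sum the geometric series via the Legendre identity --- is sound and is, modulo presentation, the same computation the paper performs via Gantert--H\"ofelsauer's comparison with independent walks. But as written it has a logical hole: to convert the displayed estimate into $\p(\tau_x\le t^{(1/2)}_x-y\mid S)\le Ce^{-cy}$ you also need an a priori \emph{upper} bound $t^{(1/2)}_x\le x/\widehat{c}_1+O(1)$; with only the lower bound $t^{(1/2)}_x\ge x/\widehat{c}_1-O(1)$ the exponent $\kappa\big(x/\widehat{c}_1-t^{(1/2)}_x+y\big)$ may be negative. So the lower tail is not independent of the upper.

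The genuine gap is in the upper tail, precisely at the step you yourself identify as ``the main obstacle.'' For a particle $v$ at position $\by=\bet_{v,y}(y)$ at time $y$, you need the stochastic dominance $\tau_{B_\bx-\by}\lst\tau_{B_\bx}$ so that the sub-BRW succeeds by time $t^{(1/2)}_x$ with probability at least $p/2$. The paper states explicitly that such ``stochastic monotonicity for $\tau_x$ in $x$ $\dots$ appears out of our reach,'' and the bulk of its proof is spent circumventing exactly this; your appeal to ``a coupling argument with the optimally tilted measure'' is a wish, not an argument. Separately, your selection criterion $\by\cdot\bc_2\ge0$, $\|\by\|\le C_1y$ does not even yield $\|\bx-\by\|\le x$ in the spherically symmetric case (expand $\|\bx-\by\|^2=x^2-2xy_1+\|\by\|^2$: the cross term need not dominate $\|\by\|^2$ when $y_1$ is small), so the reduction to scalar monotonicity fails as stated.

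The replacement the paper uses is worth internalizing. It never compares $\tau_{B_\bx-\by}$ to $\tau_{B_\bx}$ by a stochastic order. Instead it re-aims each selected particle $v_i$ at an auxiliary center $\bx_i$ chosen with $\|\bx_i-\bet_{v_i}\|=x$ and $\|\bx-\bx_i\|=O(y)$; by translation invariance the FPT of $v_i$'s sub-BRW to $B_{\bx_i}$ is then distributed \emph{exactly} as $\tau_x$, with no ordering needed, and the remaining $O(y)$ detour from $B_{\bx_i}$ to the true target $B_\bx$ is absorbed using the crude linear-speed estimate $\p(\tau_z\ge C_4z\mid S)\ll e^{-c_{10}z}$ of Lemma~\ref{lemma:LD for taux}. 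This re-aiming device, together with the bookkeeping around survival of the sub-BRWs and the recursion around the $\delta^y$-quantile rather than the median directly, is what your outline is missing.
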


The analog of Theorem \ref{theorem:concentration} for the maximum of BRW traces back to \citep{dekking1991limit} and \citep{mcdiarmid1995minimal}, which assumed uniformly bounded jumps. The case of unbounded jumps was later resolved by \citep{bramson2007tightness} and \citep{bramson2009tightness} using a different approach. Theorem \ref{theorem:concentration} generalizes these results to the first passage times of BRW in arbitrary dimensions. In this setting, for the jumps, we only require mean zero and assumption (A6). Since (A3) and (A4) together imply (A6), Theorem \ref{theorem:concentration} applies in the spherically symmetric case as well.

Extending these results to the case of non-spherically symmetric jumps, we define
\begin{align}
    \widehat{A}(x):=\frac{x}{\widehat{c}_1}+\frac{d+2}{2\,\widehat{c}_1\partial_{x_1} I(\widehat{c}_1,\z)}\,\log x,\label{eq:hatA}
\end{align}
where we recall the definition of $\widehat{c}_1$ from assumption (A6).
\begin{theorem}\label{thm:main long}
 Assume (A1), (A2), (A5), and (A6). Conditional upon survival,  
\begin{align}
    \tau_x=\widehat{A}(x)+o_\p(\log x)=\frac{x}{\widehat{c}_1}+\frac{d+2}{2\widehat{c}_1\partial_{x_1} I(\widehat{c}_1,\z)}\,\log x+o_\p(\log x).\label{eq:tauxasymp long}
\end{align}
In other words, the collection $(\tau_x-\widehat{A}(x))/\log x$ converges to $0$ in probability as $x\to\infty$.
\end{theorem}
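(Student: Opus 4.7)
The plan is to reduce Theorem \ref{thm:main long} to a sharp estimate on the median of $\tau_x$ via Theorem \ref{theorem:concentration}, and then to obtain this estimate from a Bahadur--Rao type local limit theorem for the walk tilted by $\bc_2$. Since (A1), (A2), (A6) hold, Theorem \ref{theorem:concentration} applies and the family $\tau_x - t_x^{(1/2)}$ is tight conditional on survival. It therefore suffices to prove $t_x^{(1/2)} = \widehat{A}(x) + o(\log x)$; equivalently, for every $\epsilon > 0$,
\[
\p\bigl(\tau_x \leq \widehat{A}(x) - \epsilon \log x \,\big|\, S\bigr) \to 0, \qquad \p\bigl(\tau_x \leq \widehat{A}(x) + \epsilon \log x \,\big|\, S\bigr) \to 1.
\]

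For the lower tail of the median, I would use many-to-one together with Markov's inequality. Writing $n_-(x) := \lfloor \widehat{A}(x) - \epsilon \log x \rfloor$ and letting $S_n := \bxi_1 + \cdots + \bxi_n$ be a sum of i.i.d.\ copies of $\bxi$,
\[
\p(\tau_x \leq n_-) \leq \sum_{n=0}^{n_-} \rho^n\, \p(S_n \in B_x).
\]
Exponentially tilting by $\bc_2$ (valid in a neighborhood of $\bc_2$ by (A6)) gives a walk with drift $(\widehat{c}_1, \z)$, and assumption (A5) supplies a Stone-type local CLT for the tilted walk. This yields the Bahadur--Rao asymptotics
\[
\rho^n\,\p(S_n \in B_x) \asymp n^{-d/2}\exp\bigl(n\log \rho - n\,I(x/n, \z)\bigr).
\]
Using $I(\widehat{c}_1, \z) = \log \rho$ and Taylor-expanding $I$ at $(\widehat{c}_1, \z)$, the coefficient $\frac{d+2}{2\widehat{c}_1\partial_{x_1} I(\widehat{c}_1,\z)}$ of $\log x$ in $\widehat{A}(x)$ is calibrated so that shifting $n$ by $\epsilon \log x$ shifts the sum polynomially; adding over $n$ gives $\p(\tau_x \leq n_-) \lesssim x^{-\delta(\epsilon)} \to 0$.

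For the upper tail of the median, I would run a second-moment argument. Set $n_+(x) := \lfloor \widehat{A}(x) + \epsilon \log x \rfloor$ and $N_x := \#\{v \in V_{n_+} : \bet_{v,n_+}(n_+) \in B_x\}$. The same local CLT gives $\E[N_x] \gtrsim x^{\delta'(\epsilon)}$. Decomposing by the split time $k$ of a pair of sampled particles and invoking (A1) to control offspring moments,
\[
\E[N_x^2] \leq C\sum_{k=0}^{n_+} \rho^{2n_+ - k}\,\E\bigl[\p(S_{n_+} \in B_x \mid \F_k)^2\bigr],
\]
where $\F_k$ is generated by the first $k$ steps of the walk. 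The inner expectation is controlled by a convolution-type local CLT, and summation shows $\E[N_x^2] \leq C\,\E[N_x]^2$. Paley--Zygmund then yields $\p(N_x > 0) \geq c > 0$, and applying this independently to the $\#V_m \to \infty$ subtrees emerging at a large fixed time $m$ (using the Galton--Watson survival) upgrades this to probability tending to one conditional on $S$.

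The principal obstacle is the sharp local limit theorem for the tilted walk in the non-spherically symmetric setting. With spherical symmetry unavailable, (A5) provides the $L^r$ density needed for a Gnedenko/Stone-type local CLT, but the Taylor expansion of $I$ at $(\widehat{c}_1, \z)$ must be carried out in $\R^d$ with no rotational invariance; one must verify that fluctuations in the $d-1$ directions orthogonal to $\bc_2$ are absorbed into the $n^{-d/2}$ prefactor and do not disturb the precise constant $\frac{d+2}{2\widehat{c}_1\partial_{x_1} I(\widehat{c}_1,\z)}$. A secondary difficulty is that the second-moment bound must stay valid at the $o(\log x)$ scale without stronger integrability assumptions on $\bxi$, which explains why only the weaker $o_\p(\log x)$ rate (rather than the $O_\p(\log\log x)$ rate of Theorem \ref{thm:main}) is targeted here.
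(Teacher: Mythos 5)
Your reduction to a median estimate via Theorem \ref{theorem:concentration} matches the paper's framework, but both moment computations contain a fatal gap: you do not account for the ballot correction, which is exactly where the ``$+2$'' in the coefficient $\frac{d+2}{2\widehat{c}_1\partial_{x_1}I(\widehat{c}_1,\z)}$ comes from.

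Concretely, for the lower tail of the median you write
\[
\p(\tau_x\leq n_-)\leq \sum_{n\leq n_-}\rho^n\,\p(S_n\in B_x)\asymp\sum_{n\leq n_-} n^{-d/2}\exp\bigl(n\log\rho - nI(x/n,\z)\bigr).
\]
Expanding $I$ around $(\widehat{c}_1,\z)$ with $n=x/\widehat{c}_1 + \delta$ gives $n\log\rho - nI(x/n,\z)\approx \widehat{c}_1\partial_{x_1}I(\widehat{c}_1,\z)\,\delta$, so at $n_-=\widehat{A}(x)-\epsilon\log x$ the summand is of order $x^{(d+2)/2-\epsilon\widehat{c}_1\partial_{x_1}I(\widehat{c}_1,\z)}\cdot x^{-d/2}=x^{1-\epsilon\widehat{c}_1\partial_{x_1}I(\widehat{c}_1,\z)}$, which does \emph{not} tend to zero for small $\epsilon$. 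The naive first moment calibrates to the coefficient $\frac{d}{2\widehat{c}_1\partial_{x_1}I(\widehat{c}_1,\z)}$, not $\frac{d+2}{2\widehat{c}_1\partial_{x_1}I(\widehat{c}_1,\z)}$. The extra factor of $n^{-1}$ is a ballot-type correction: one must count only particles whose ancestral path respects a barrier (in the paper, confinement to a moving cone $\K_\alpha(-\bc_2)$ with $p_{\alpha,d}\approx 1$, via Lemmas \ref{lemma:rw in cones} and \ref{lemma:newballot}), and separately control the contribution of paths that exit the barrier (the decomposition $M_x=\sum_\ell M_{x,\ell}$ in Section \ref{sec:lb of thm main long}).

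The same issue undermines the upper-bound second moment. Without a barrier, the correlated pairs dominate: writing the split-time decomposition $\E[N_x^2]\lesssim\E[N_x]+\sum_k\rho^{2n_+-k}\E\bigl[\p(S_{n_+}\in B_x\mid\F_k)^2\bigr]$ and tilting, one finds that the $k$-th summand carries a factor $\exp\bigl(k(\Lambda(2\bc_2)-2\bc_2\cdot(\widehat{c}_1,\z)+\log\rho)\bigr)$, and $\Lambda(2\bc_2)-2\bc_2\cdot(\widehat{c}_1,\z)+\log\rho>0$ by strict convexity of $\Lambda$ (the supremum defining $\Lambda(2\bc_2)$ is not attained at $(\widehat{c}_1,\z)$), so the sum is exponentially large in $n_+$ and the inequality $\E[N_x^2]\leq C\E[N_x]^2$ fails. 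The paper controls this by restricting to paths in $\widehat{P}_n$ that remain in a cone, and then the ballot factor $n^{-p_{\alpha,d}}$ makes the sum over split times summable (Section \ref{sec:ub of thm main long}). This barrier-and-cone machinery, which your proposal entirely omits, is precisely the technical core of the non-spherical result and the source of the $o_\p(\log x)$ (rather than $O_\p(\log\log x)$) error: the circular cone $\K_\alpha$ is only a proxy for the half-space and introduces the loss $p_{\alpha,d}=1\pm\epsilon$.
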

There are certain instances for non-spherically symmetric jumps where the asymptotic \eqref{eq:tauxasymp long} can be improved. Such a class includes product measures (meaning that $\bxi$ has independent coordinates) and non-degenerate centered elliptical distributions on $\R^d$.\footnote{We say that a centered random variable $\bxi$ follows a \emph{non-degenerate elliptical distribution} if $\bxi\dd TU$ where $U$ is spherically symmetric and $T$ is an invertible linear transformation.} These will be addressed in detail in Section \ref{sec:non-spherical}.

An immediate corollary of Theorems \ref{theorem:concentration} and \ref{thm:main long} is the following strong law of large numbers for the first passage times.
\begin{corollary}\label{coro:slln}
    Assume (A1), (A2),  (A5), and (A6). Conditioned upon survival,
     $$\frac{\tau_x}{x}\to\frac{1}{\widehat{c}_1}\qquad \text{a.s.}
     $$
\end{corollary}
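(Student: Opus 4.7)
The idea is that Theorem \ref{theorem:concentration} provides sub-exponential concentration of $\tau_x$ around its median, which is precisely the strength needed to upgrade the in-probability rate of Theorem \ref{thm:main long} to almost sure convergence via Borel-Cantelli. I would start by observing that Theorem \ref{thm:main long} forces the conditional median $t_x^{(1/2)}$ to satisfy $t_x^{(1/2)} = \widehat{A}(x) + o(\log x)$ as $x \to \infty$: if it deviated from $\widehat{A}(x)$ by more than some fixed multiple of $\log x$, then $\p(|\tau_x - \widehat{A}(x)| > \epsilon \log x \mid S) \to 0$ would be violated for all sufficiently large $x$. Consequently $t_x^{(1/2)}/x \to 1/\widehat{c}_1$ deterministically, and the corollary reduces to showing $(\tau_x - t_x^{(1/2)})/x \to 0$ almost surely on $S$.

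For any fixed $\epsilon > 0$, applying Theorem \ref{theorem:concentration} at $y = \epsilon x$ gives $\p(|\tau_x - t_x^{(1/2)}| > \epsilon x \mid S) \leq C e^{-c\epsilon x}$. Restricting to integer $x = n$, these bounds are summable, so by Borel-Cantelli we have $|\tau_n - t_n^{(1/2)}| \leq \epsilon n$ for all sufficiently large $n$, almost surely on $S$. Intersecting over a sequence $\epsilon_k \downarrow 0$ yields $\tau_n/n \to 1/\widehat{c}_1$ almost surely on $S$.

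To extend the conclusion from integer $n$ to all real $x \to \infty$, I would interpolate between consecutive integers. For $x \in [n, n+1]$ the ball $B_x$ lies within distance $2$ of $B_n$, so once a particle first enters $B_n$ at time $\tau_n$, the Markov property together with the non-degeneracy of the jump distribution ensured by (A5) imply that some descendant reaches $B_x$ in $O(1)$ further steps with exponential tails uniform in $x \in [n, n+1]$. Since $\tau_x$ is integer-valued and piecewise constant in $x$ for each BRW realization, the supremum over $x \in [n, n+1]$ reduces to a maximum over finitely many values, which can be controlled via a union bound and the concentration of Theorem \ref{theorem:concentration}, yielding $\sup_{x \in [n, n+1]}|\tau_x - \tau_n| = o(n)$ almost surely on $S$. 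I expect this final interpolation step to be the main obstacle, since the concentration estimate is pointwise in $x$ while almost sure convergence for continuous $x$ requires uniform control over an uncountable collection; the rest of the argument is the standard upgrade from convergence in probability to almost sure convergence using exponential tails.
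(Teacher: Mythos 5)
Your overall blueprint---Borel--Cantelli with exponential concentration from Theorem~\ref{theorem:concentration} along a discrete subsequence, then an interpolation argument to cover all real $x$---is the right strategy, and the first half (getting $t_x^{(1/2)}/x \to 1/\widehat{c}_1$ deterministically from Theorem~\ref{thm:main long}, then summing the tail bound $Ce^{-c\epsilon n}$ over integers) matches the paper. Where you diverge is precisely the interpolation step you flag as the main obstacle, and there you have genuine gaps that the paper's argument is specifically designed to avoid.

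The paper sidesteps interpolation entirely by introducing $\tau_x(r)$, the FPT to a ball of radius $r$ centered at $(x,\z)$, observing that Theorems~\ref{theorem:concentration} and~\ref{thm:main long} hold verbatim for each fixed $r$, and then using ball nesting: for $x\in[n,n+1]$ one has $B_x(1)\subseteq B_n(2)\cap B_{n+1}(2)$, so $\inf_{x\in[n,n+1]}\tau_x(1)\geq\max(\tau_n(2),\tau_{n+1}(2))$, and for $x\in[n,n+1/2]$ one has $B_n(1/2)\cup B_{n+1/2}(1/2)\subseteq B_x(1)$, so $\sup_{x\in[n,n+1/2]}\tau_x(1)\leq\min(\tau_n(1/2),\tau_{n+1/2}(1/2))$. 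These two deterministic inclusions simultaneously control $\tau_x$ from above and below by FPTs along half-integer shifts, each of which satisfies the SLLN a.s.\ by Borel--Cantelli. Both directions are handled by one trick.

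Your interpolation, by contrast, has two concrete holes. First, your Markov-property argument (once a particle enters $B_n$, a descendant reaches $B_x$ in $O(1)$ more steps) only yields an \emph{upper} bound $\tau_x\leq\tau_n+O(1)$; you never produce a matching lower bound $\tau_x\geq\tau_n-o(n)$, and the FPTs $\tau_x$ are not monotone in $x$ for $d\geq 2$ (the paper explicitly flags this non-monotonicity as a pitfall in the proof of Theorem~\ref{theorem:concentration}). Second, even the upper bound is not as innocent as stated: the sub-process initiated from the first particle landing in $B_n$ survives only with probability $p<1$, and conditioning on the global survival event $S$ does not automatically let you assume this sub-process survives. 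The paper devotes a paragraph before the proof of Theorem~\ref{theorem:concentration} to exactly this subtlety, and resolves it there by working with exponentially many particles instead of a single one. Your one-line invocation of ``the Markov property together with the non-degeneracy of the jump distribution ensured by (A5)'' would need to be unpacked along those lines to be sound. In short: the skeleton is right, but the interpolation as written would not survive scrutiny; the paper's nested-radii device is precisely what makes that step trivial.
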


As yet another consequence of Theorems \ref{theorem:concentration} and \ref{thm:main long} (while slightly modifying their proofs), the full range of the BRW forms a dense subset of $\R^d$, complementing Corollary 2.5 of \citep{oz2023} that the full range of the BBM is dense.  

\begin{corollary}\label{coro:density}
Assume (A1), (A2),  (A5), and (A6), and that the MGF $\phi_{\bxi}$ is well-defined in a neighborhood of $\z$. Conditioned upon survival, the set 
$$R:=\bigcup_{n\geq 0}\bigcup_{v\in V_n}\{\bet_{v,n}(n)\}$$ is dense in $\R^d$ almost surely. 
    
\end{corollary}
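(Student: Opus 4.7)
\emph{Plan.} By separability of $\R^d$, it suffices to show that conditional on survival, for each ball $B = B(\bx_0, r)$ with $\bx_0 \in \mathbb{Q}^d$ and $r \in \mathbb{Q}_{>0}$, some particle of the BRW visits $B$ almost surely; density then follows by countable intersection over all such rational balls.

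Fix such a ball $B$. The first step is to establish a positive-probability lower bound $\p(B \text{ visited}) \geq \alpha_B > 0$ by adapting the first- and second-moment estimates in the proofs of Theorems \ref{theorem:concentration} and \ref{thm:main long} to the general target $B$ in place of the axial ball $B_x$; this is the ``slight modification'' referenced in the corollary statement. Concretely, for $N_n(B) := \#\{v \in V_n : \bet_{v,n}(n) \in B\}$, the first moment $\E[N_n(B)] = \rho^n \p(\text{RW}_n \in B) \gtrsim \rho^n / n^{d/2}$ grows exponentially by a local CLT using the centering (A2) and the density regularity from (A5), and a matching second-moment bound follows from the standard branching many-to-two decomposition together with (A1). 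Paley--Zygmund then yields the desired positive lower bound.

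To upgrade ``with positive probability'' to ``almost surely on survival'', I would leverage the branching structure. At a large generation $n_0$, the count $K_{n_0} := \#\{v \in V_{n_0} : |\bet_{v, n_0}(n_0)| \leq 1\}$ of near-origin particles has mean $\asymp \rho^{n_0}/n_0^{d/2}$, and a variance estimate combined with the Kesten--Stigum theorem (using (A1)) shows $K_{n_0} \to \infty$ almost surely on survival. Each subtree rooted at such a near-origin particle is an independent BRW translated by its root's position; by the uniform version of the preceding moment argument applied to the translated target $B - \bet_{v, n_0}(n_0)$, each subtree visits $B$ with probability at least some $\alpha'>0$ that is uniform over translations of bounded size. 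Conditionally on $K_{n_0}$, the probability that no such subtree visits $B$ is at most $(1 - \alpha')^{K_{n_0}}$, which vanishes on survival.

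The main obstacle is the concentration of $K_{n_0}$: one needs $K_{n_0} \to \infty$ almost surely on survival, not merely in probability. The mean-zero hypothesis (A2) is essential here, making $\p(|\text{RW}_{n_0}| \leq 1) \asymp n_0^{-d/2}$ rather than exponentially small, while (A5) supplies the density regularity needed for the local CLT. The second moment of $K_{n_0}$ is controlled via the many-to-two formula under (A1), and a Chebyshev-type argument combined with Kesten--Stigum delivers the required almost-sure divergence; interleaving this with the independence of subtrees closes the proof.
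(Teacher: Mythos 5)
Your proof takes a genuinely different route from the paper. The paper shows that the set of particle positions at time $n$ becomes $(1/n)$-dense in the expanding ball $(n/C_2)B_\z$ outside an event of exponentially small probability $\p(E_n)\ll e^{-n/C_4}$ (via a modified first-passage-time upper bound for radius-$1/n$ targets in all directions, combined with the exponential tail from Theorem~\ref{theorem:concentration}), and then applies Borel--Cantelli, citing \citep{oz2023}. You instead reduce the statement, via separability, to showing that each rational ball $B$ is visited almost surely on survival, aiming to combine Paley--Zygmund with an independent-subtree restart argument.

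However, your upgrade from positive probability to almost sure has a real gap. You explicitly base it on the claim that $K_{n_0}:=\#\{v\in V_{n_0}:\|\bet_{v,n_0}(n_0)\|\le 1\}\to\infty$ a.s.\ on survival, and you justify it by ``a Chebyshev-type argument combined with Kesten--Stigum.'' This does not work: Chebyshev only controls the probability that $K_{n_0}$ deviates from its mean at a single time $n_0$, giving convergence in probability, and the events $\{K_{n_j}<M\}$ are strongly dependent across $j$, so no Borel--Cantelli argument closes the gap; Kesten--Stigum controls $\#V_n/\rho^n$, not the local occupation count $K_n$. What you are asserting is effectively an almost-sure local limit theorem for the BRW, a substantially harder result than the tools you invoke. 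Even the CLT of \citep{biggins1990central}, which the paper cites elsewhere, concerns the $\sqrt{n}$-scale occupation measure, not fixed unit balls, so it would not directly supply the claim either.

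The clean fix is to drop the near-origin restriction entirely. Let $f(\bz)$ be the probability that a BRW started at $\bz$ ever visits $B$. By Lemma~\ref{lemma:LD for taux} (suitably rotated, using that $\inf_\Theta\widehat{c}_1(\Theta)>0$ by (A5)--(A6)), $\p(\text{visit }B\mid S)\ge 1-Ce^{-c\|\bz\|}$, so $f(\bz)\ge p-Ce^{-c\|\bz\|}\ge p/2$ once $\|\bz\|$ is large; for $\bz$ in any bounded region one has $f(\bz)\ge\delta_M>0$ (assumption (A5) ensures $\bxi$ spans $\R^d$, so $f$ is strictly positive, and lower semicontinuity plus compactness gives a uniform bound). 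Hence $\inf_\bz f(\bz)\ge\delta>0$. Then $\p(\text{no subtree of }V_n\text{ visits }B\mid\F_n)\le(1-\delta)^{\#V_n}\to 0$ a.s.\ on survival by Kesten--Stigum, and intersecting over rational balls finishes the proof. With this correction your argument becomes a valid alternative to the paper's, avoiding the $(1/n)$-density bootstrap but still relying (through Lemma~\ref{lemma:LD for taux}) on the same exponential-tail machinery at its core.
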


In Section \ref{sec:numerical}, we test Theorems \ref{thm:main} and \ref{thm:main long} numerically with a \emph{path purging} algorithm that significantly improves the computational efficiency for capturing the extremal behavior of a BRW.

\subsection{Proof strategy}\label{sec:proof stratgey}
In this subsection, we discuss the high-level intuition of the strategy we will follow, building from the case of $d=1$. 
Throughout, we condition on the survival event. We first assume that the jump distribution is spherically symmetric. 
For one-dimensional BRW, we expect the following parity relation between maximum and FPT. Recall that the asymptotic for the maximum $M_n$ at time $n$ is 
\begin{align}
 M_n=m_n+O_\p(1),\quad\text{ where }\quad m_n:=c_1n-\frac{3}{2\bl}\,\log n.\label{eq:one-dim asymp}
\end{align}
Inverting \eqref{eq:one-dim asymp} leads to the asymptotic (e.g., using  the Lambert $W$ function)
$$n=\frac{M_n}{c_1}+\frac{3}{2\bl c_1}\,\log M_n+O_\p(1).$$
It is then natural to guess that for one-dimensional BRW,
\begin{align}
    \tau_x=t_1(x)+O_\p(1),\quad\text{ where }\quad t_1(x):=\frac{x}{c_1}+\frac{3}{2\bl c_1}\,\log x,\label{eq:1dasymp}
\end{align}
which we will prove in Proposition \ref{prop:1dfpt} below in Section \ref{sec:d=1}. Recall \eqref{eq:A}. Let us decompose $A(x)=t_1(x)+t_2(x)$, where
\begin{align}
t_2(x):=A(x)-t_1(x)=\frac{d-1}{2\bl c_1}\,\log x.\label{eq:m1m2}
\end{align} 
For a general dimension $d\geq 1$, we shall explain the extra term $t_2(x)$.

The main difficulty in analyzing the first passage times in higher dimensions is that the second moment method does not immediately apply if one counts the number of particles in the target ball $B_x$ at a certain time. 
To bypass this difficulty, we prove separately the upper and lower bounds for $\tau_x$. 

A key step in these bounds is Proposition \ref{prop:number of particles}, which keeps track of the number of particles that are near the frontier (say, a distance of $\log n$ away) for a one-dimensional BRW. Roughly speaking, there are around $xe^{c_2x}$ particles that are of distance $x$ behind the frontier, where $x\ll\log n$.  This amounts to generalizing the proof of the asymptotic of the maximum \eqref{eq:one-dim asymp}; see \citep{bramson2016convergence}. A notable distinction is that unfortunately, one cannot conclude from the second moment method the lower bound for the number of particles near the frontier with overwhelming probability. Instead, one only concludes the lower bound with a uniformly positive probability. This gap can be filled using Theorem \ref{theorem:concentration}, which shows that a uniformly positive probability would already be sufficient.

A particularly nice feature of the spherical symmetry of the jumps is that the displacements in different directions are approximately independent (likewise, the same result Theorem \ref{thm:main} holds if we instead assume independent centered jumps in the $d$ directions). In particular, at time $n$, one roughly expects a proportion $O(n^{-(d-1)/2})$ of the particles that lie within unit distance from the origin in the last $d-1$ dimensions. In other words, one expects that the first passage event is realized around the time when $x^{(d-1)/2}$ particles reach a distance of $x$ for the $\R^d$-valued BRW projected onto the first dimension; see Figure \ref{fig:schematic_paper2} below. On the other hand, by \eqref{eq:one-dim asymp}, the extra time $t_2(x)$ from \eqref{eq:m1m2} pushes the frontier ahead by a distance of approximately $c_1t_2(x)=(d-1)(\log x)/(2c_2)$, which amounts to having approximately $ x^{(d-1)/2}$ extra particles in view of Proposition \ref{prop:number of particles}.


\begin{figure}[ht!] 
\centering
\includegraphics[width=\textwidth]{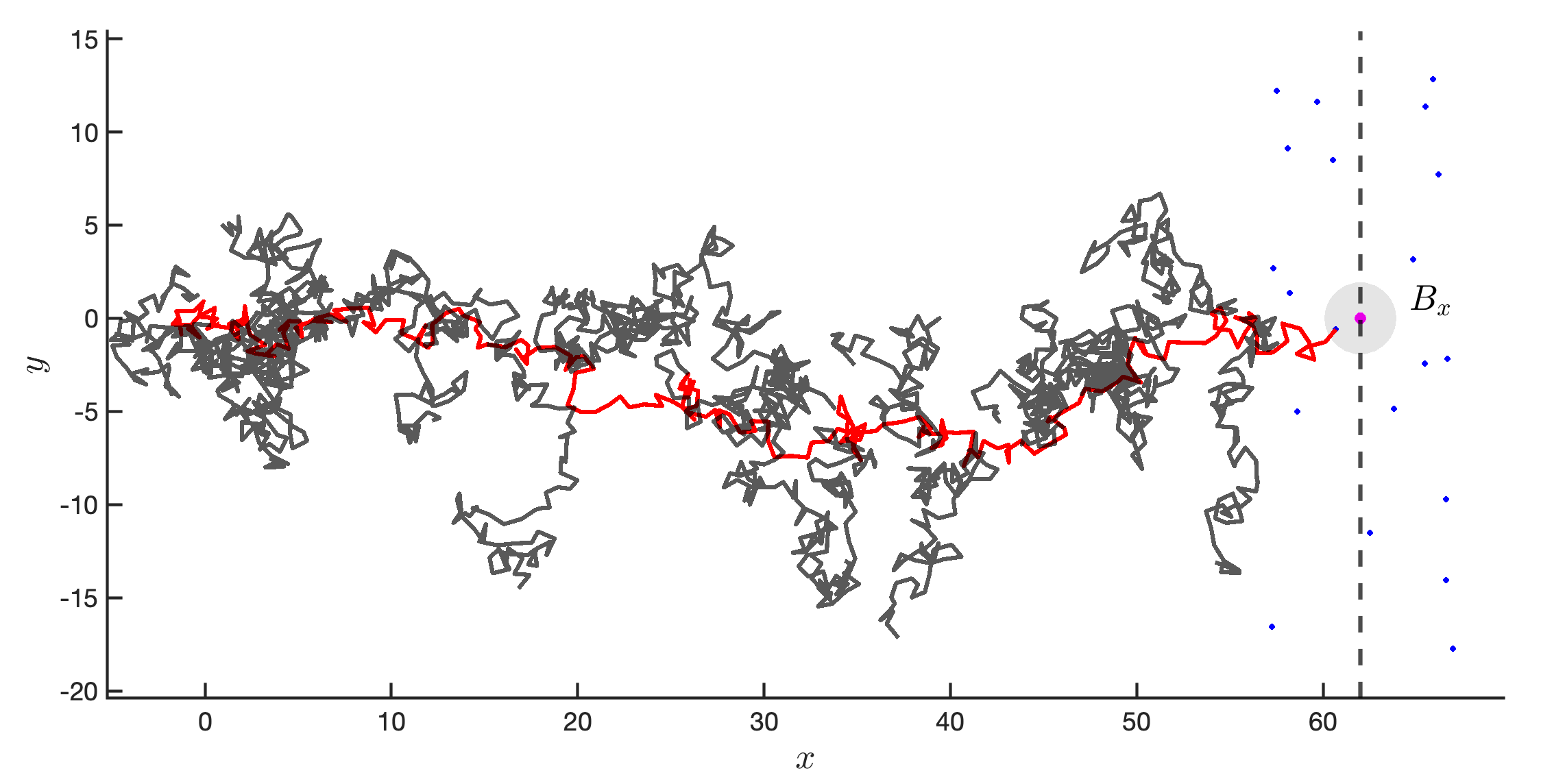}
    \caption{Schematic description of the heuristic for spherically symmetric jumps: around a proportion $x^{-(d-1)/2}$ of the particles that reach roughly $x$ far in the first coordinate (represented by blue dots) lie in the ball $B_x$. The path in red represents (part of) the trajectory that realizes the FPT, whereas the other branches are shown in black.}
    \label{fig:schematic_paper2}
\end{figure}

If the jump distribution is not spherically symmetric, applying similar arguments above (with Proposition \ref{prop:number of particles} replaced by Theorem B of \citep{biggins1979growth} and Proposition \ref{prop:ulb gaussian approx} by Proposition \ref{prop:ulb gaussian approx long}; we omit the details here) will lead to 
\begin{align}
    \tau_x=\frac{x}{\widehat{c}_1}+O_\p(\log x),\label{eq:weak taux}
\end{align}where the corresponding upper and lower bounds for $\tau_x$ are separated by an order of $\log x$.  
To pin down the coefficient of the logarithmic correction term, we extend the arguments for the maximum of one-dimensional BRW (\citep{bramson2016convergence})  with a modified barrier. 
The modified barrier event lies in $\R^d$ and hence results on random walks in cones (\citep{denisov2015random}) are applicable. Note that the same proof also applies for the spherically symmetric case, but would yield less precise asymptotics compared to \eqref{eq:tauxasymp} (an error of $o_\p(\log x)$ instead of $O_\p(\log\log x)$).

\subsection{Applications to polymer physics} \label{sec:appl} 
Our interest in estimating the FPT of spatial branching processes stems from their relevance to polymer physics. 
The mechanical behavior of polymer networks appears to be controlled by the shortest paths between distant nodes in the polymer network, as shown by recent works \citep{yin2020topological,yin2024network} using coarse-grained molecular dynamics (CGMD) simulations.
The shortest path length (SP) serves as an indicator of how stretched the average load-bearing chains are in the polymer network.
Theories based on SP go beyond traditional polymer models focused on elastic behaviors and aim to explain the inelastic behaviors exhibited by such materials.

The SP between (fixed) distant nodes is equivalent to the FPT of the BRW beginning from the origin (source node) and terminating at a desired point in space (destination node). Moreover, the occurrence of the cross-links along the polymer backbone mimics a Poisson process. As a result, a cross-link between two chains is equivalent to a branching event observed in the BRW genealogy tree. The agreement of the mean FPT from the BRW theory with the mean SP from the CGMD simulation results has been shown in our recent work~\citep{zhang2024modeling}.
Furthermore, the numerical implementation of the BRW model shows an excellent agreement between its FPT distribution and the SP distribution from the CGMD simulation. Our earlier work initially focused on constant length jumps with the BRW model where there existed a correlation between the components of a jump step in the $d$ dimensions by virtue of being sampled from the unit sphere $\S^{d-1}=\{\bx\in\R^d\mid\n{\bx}=1\}$. This was later shown to be represented equally well by the Gaussian BRW and BBM in which the components of each jump step are independent in the $d$ dimensions. However, the analysis was restricted to spherically symmetric jumps to accurately model the as-prepared polymer network prior to deformation. 

\section{Proofs of main results} \label{app:proof_thm1}
This section is devoted to the proofs of the main results. We will first analyze the simple case of $d=1$ in Section \ref{sec:d=1}, where a precise asymptotic of the FPT follows directly from inverting the maxima asymptotic \eqref{eq:one-dim asymp}. We then collect the necessary tools in Sections \ref{sec:number of particles} and \ref{sec:rw}, and prove Theorems \ref{thm:main} and \ref{theorem:concentration} in Sections \ref{sec:proof} and  \ref{sec:concentration}  respectively.  
Most of the arguments before and including Section \ref{sec:number of particles} closely follow the approach of \citep{bramson2016convergence}.\footnote{The results in \citep{bramson2016convergence} are stated for the case $p_0=0$, but the general case follows in a similar way.} 
The case of non-spherically symmetric jumps (Theorem \ref{thm:main long}) will be addressed in Section \ref{sec:proof long}, where we also summarize preliminary results on random walks that are constrained to stay in a cone. Section \ref{sec:coroproof} presents the proofs of Corollaries \ref{coro:slln} and \ref{coro:density}.

\sloppy We begin by introducing some necessary notations. Following \citep{bramson2016convergence}, we denote by $V_n$ the offspring on the $n$-th generation. Let $\{\bet_{v,n}(k)\}_{k=0,\dots,n}$ denote the $d$-dimensional random walk where $\bet_{v,n}(k)$ is the position of the $k$-th generation for the path in the tree leading to a vertex $v\in V_n$. We also let $\eta^{(1)}_{v,n}(k)$ denote the first coordinate of $\bet_{v,n}(k)$. When $d=1$, these notions coincide.
 For deterministic quantities or functions $A,B$, We use Vinogradov's symbol $A\ll B$ (or $A=O(B)$) to denote $|A|\le CB$ with some constant $C>0$ that depends only on the distribution of $\bxi$ and the dimension $d$. We write $A\asymp B$ if $A\ll B\ll A$. The notation $\n{\cdot}$ always denotes the Euclidean norm. 
We use $B_\bx(r)$ to denote the shifted ball of radius $r$ centered at $\bx\in\R^d$, $B_\bx=B_\bx(1)$, and $B_x=B_{(x,\z)}$ for $x\in\R$.
From here on, without loss of generality, we will assume that quantities that tend to infinity are integers. The arguments can be made rigorous by writing floor functions everywhere, but we have chosen not to do so for the simplicity of the notation.

\subsection{Warming up in dimension one}\label{sec:d=1}
For $\beta\geq 1$ and $n\in\N$ we define the event
$$G_{n,\beta}:=\bigcup_{v\in V_n}\bigcup_{0\leq k\leq n}\left\{\eta^{(1)}_{v,n}(k)\geq \frac{km_n}{n}+\beta+\frac{4}{\bl}(\log(\min(k,n-k)))_+\right\}.$$
\begin{lemma}[Lemma 2.4 of \citep{bramson2016convergence}]\label{lemma:beyondcurve}
  Assume conditions (A1), (A2), and (A4), and assume that the jump distribution is non-lattice. It holds that for all $n\in\N$, $\p(G_{n,\beta})\ll \beta e^{-\bl\beta}$ for $\beta\geq 1$.
\end{lemma}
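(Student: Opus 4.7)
The strategy is the by-now standard many-to-one plus ballot-style argument pioneered by Bramson and adapted to BRW in \citep{bramson2016convergence}: reduce to a single random walk via first moments, apply an exponential tilt, then sharpen with a ballot estimate.

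To begin, any particle $v \in V_n$ whose path exceeds the curve at some time $k$ has an ancestor at level $k$ already above the threshold, so a union bound over $k$ yields
$$\p(G_{n,\beta}) \leq \sum_{k=0}^{n} \p\!\left(\exists u \in V_k : \eta_u(k) \geq c_k\right), \qquad c_k := \tfrac{k m_n}{n} + \beta + \tfrac{4}{\bl}(\log\min(k, n-k))_+.$$
Markov's inequality combined with the many-to-one identity $\E\!\left[\#\{u \in V_k : \eta_u(k) \in A\}\right] = \rho^k\,\p(S_k \in A)$, where $S_\cdot$ is the random walk with jump law $\xi$, bounds each summand by $\rho^k\,\p(S_k \geq c_k)$.

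Next I would apply the exponential change of measure with tilt $\bl$. Assumption (A4) gives $\bl c_1 - \log\phi_\xi(\bl) = \log\rho$, hence $\rho\,\phi_\xi(\bl) = e^{\bl c_1}$, and under the tilted measure $\tilde\p$ the recentered walk $\tilde{S}_j := S_j - c_1 j$ is centered with finite exponential moments. Since $k m_n / n = c_1 k - \frac{3k\log n}{2\bl n}$, the barrier rewrites as $\tilde{c}_k := -\frac{3k\log n}{2\bl n} + \beta + \frac{4}{\bl}(\log\min(k, n-k))_+$, which is concave in $k$ and bounded by $\beta + O(\log n)$. A direct Markov estimate at this stage gives only $\rho^k\,\p(S_k \geq c_k) \ll n^{3k/(2n)}\,\min(k, n-k)^{-4}\,e^{-\bl\beta}$, whose sum over $k$ is $n^{3/2} e^{-\bl\beta}$: too weak by a factor $n^{3/2}/\beta$, which must be defeated by a ballot constraint.

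Finally, I would refine by decomposing according to the first crossing time $\tau := \inf\{k : S_k \geq c_k\}$. On $\{\tau = k\}$ the tilted walk $\tilde{S}_j$ stays strictly below $\tilde{c}_j$ for every $j < k$ and crosses at $j = k$ with a genuine overshoot, the latter being honest because $\xi$ is non-lattice. Classical Mogul'skii/Kozlov-type ballot bounds for centered random walks below a concave barrier (in the same family as the random-walk-in-cone estimates of \citep{denisov2015random} that are invoked later in the paper) contribute a joint probability of order $\min(k, n-k)^{-3/2}$; this cancels the spurious $n^{3/2}$ from the Markov step and leaves a summable series. A standard overshoot integration against the tilting weight $e^{-\bl u}$ then produces the $e^{-\bl\beta}$ prefactor, while the linear factor $\beta$ arises from integrating over the admissible range of barrier heights (equivalently, from summing the ballot probabilities weighted by the width of $\tilde{c}_k$). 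The main obstacle is the uniform ballot estimate: one needs the joint probability that a centered random walk with finite exponential moments (but no assumed symmetry) stays below a concave, slowly varying curve for $k-1$ steps and crosses it at step $k$, with constants uniform in $n$ and $k$. The logarithmic correction $(4/\bl)\log\min(k, n-k)$ in the definition of $G_{n,\beta}$ is tuned precisely to make this sum finite and yield the advertised $\beta e^{-\bl\beta}$ bound.
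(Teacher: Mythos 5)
The paper does not actually prove this lemma; it simply cites it as Lemma 2.4 of Bramson--Ding--Zeitouni, so there is no in-paper argument to compare against. Your overall strategy---many-to-one to reduce to a single random walk, decomposition at the first crossing time $\tau$, exponential tilt at rate $c_2=I'(c_1)$, and a ballot estimate for the tilted walk staying below the barrier---is exactly the Bramson--Ding--Zeitouni template, and at that level your reconstruction is on the right track.

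However, there is a concrete error in the ballot step that prevents the claimed cancellation. On $\{\tau=k\}$ the tilted walk runs for $k$ steps, starting a distance $\beta$ below the barrier at time $0$ (since $\tilde c_0=\beta$) and ending within $O(1)$ of the barrier at time $k$ (the factor $e^{-c_2\tilde S_k}$ suppresses overshoot). The ballot estimate for such a walk therefore contributes $\ll \beta/k^{3/2}$: the length of the constrained walk is $k$, not $\min(k,n-k)$. Your stated factor $\min(k,n-k)^{-3/2}$ does not cancel the $n^{3k/(2n)}$ growth from the Markov step. Indeed for $k=n-j$ with $j$ small one has $\min(k,n-k)^{-3/2}=j^{-3/2}=O(1)$, so the summand would still be of order $n^{3/2}j^{-11/2}\beta e^{-c_2\beta}$, and the sum over $k$ would remain $\gg n^{3/2}\beta e^{-c_2\beta}$ rather than $\beta e^{-c_2\beta}$. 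Only the correct $k^{-3/2}\approx n^{-3/2}$ at $k$ near $n$ kills the $n^{3/2}$; then the $\min(k,n-k)^{-4}$ from the log correction makes the sum over $j$ converge, and the $\beta$ in the final bound is the initial distance from the barrier entering the ballot numerator, not an "integration over barrier heights." Incidentally, the relevant ballot input is the one-dimensional estimate of Bramson--Ding--Zeitouni (their Lemma 2.1, or Lemma 2.3 for the log-corrected barrier); invoking the Denisov--Wachtel random-walk-in-cone bounds here is an overreach, as those are only needed in the non-spherically-symmetric higher-dimensional setting treated later in the paper.
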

\begin{proposition}\label{prop:1dfpt}
    Suppose that a one-dimensional BRW satisfies conditions (A1), (A2), and (A4), $p_0=0$ (no extinction) and that the jump distribution is non-lattice. We have 
    $$\tau_x=t_1(x)+O_\p(1)=\frac{x}{c_1}+\frac{3}{2\bl c_1}\,\log x+O_\p(1).$$
\end{proposition}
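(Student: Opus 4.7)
The plan is to invert the maxima asymptotic \eqref{eq:one-dim asymp} and produce matching two-sided bounds $\tau_x = m_1(x) \pm O_\p(1)$. Heuristically, solving $c_1 n - \frac{3}{2\bl}\log n = x$ for $n$ gives
$$n = \frac{x}{c_1} + \frac{3}{2\bl c_1}\log x + O(1) = m_1(x) + O(1),$$
which already matches the target; the content of the proof is to upgrade this heuristic to honest two-sided tight bounds.

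For the lower bound $\tau_x \geq m_1(x) - O_\p(1)$, let $\bar M_n := \max_{k \leq n} M_k$ denote the running maximum of the one-dimensional BRW. Since every point of $B_x$ has first coordinate at least $x-1$, I have the trivial containment $\{\tau_x \leq n\} \subseteq \{\bar M_n \geq x - 1\}$. Tightness of $\bar M_n - m_n$ follows from \eqref{eq:one-dim asymp} together with upper-barrier estimates in the spirit of Lemma \ref{lemma:beyondcurve}, which simultaneously controls all trajectories up to time $n$. Taking $n = \lfloor m_1(x) - C\rfloor$ gives $m_n = x - c_1 C + O(1)$, and for $C$ large this yields $\bar M_n < x-1$ with probability $\geq 1-\varepsilon$, hence $\tau_x > n$ on this high-probability event.

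For the upper bound $\tau_x \leq m_1(x) + O_\p(1)$, I would fix $n = \lceil m_1(x) + C\rceil$ and show that the count $N_n(B_x) := \#\{v \in V_n : \eta_{v,n}(n) \in B_x\}$ of particles in $B_x$ at time $n$ is at least one with high probability. By the many-to-one identity and the non-lattice local Cram\'er / Edgeworth expansion, $\E[N_n(B_x)] = \rho^n\,\p(S_n \in B_x) \asymp e^{\bl(c_1 n - x)}/\sqrt n \asymp e^{\bl c_1 C}\sqrt x$, where $S_n$ is the sum of $n$ i.i.d.\ copies of $\xi$. A matching second-moment estimate via the many-to-two identity, combined with Paley--Zygmund, gives $\p(N_n(B_x) \geq 1) \geq c$ uniformly in $x$. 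To boost this to probability tending to $1$, I would decompose the tree at a large but \emph{finite} generation $K$: on survival, with high probability there are $\gg 1$ independent subtrees rooted at generation $K$, each of which independently hits $B_x$ by time $n$ with probability bounded below by the previous step, so the union has probability at least $1 - (1-c)^{\Theta(\rho^K)}$.

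The main obstacle will be the second-moment estimate. Pairs of particles that branch at small $k$ produce large cross-terms unless one first truncates the count to those particles whose ancestral path stays under the Bramson envelope of Lemma \ref{lemma:beyondcurve}; with this truncation the second moment becomes comparable to the square of the first, modulo the usual $n^{-3/2}$ correction factor that arises in barrier and ballot-type computations. Once this estimate is in hand, the Paley--Zygmund and boosting steps are routine, and the upper and lower bounds combine to give $\tau_x = m_1(x) + O_\p(1)$.
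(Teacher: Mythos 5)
Your lower bound is essentially the paper's: both invoke the barrier estimate of Lemma~\ref{lemma:beyondcurve}, so that with probability at least $1-\ee$ every trajectory stays under the Bramson envelope, and hence no particle can reach $B_x$ before time $m_1(x)-C_1(\beta)$. For the upper bound, you take a genuinely different route. The paper does \emph{not} run a second-moment argument here; it inverts the known tightness \eqref{eq:one-dim asymp} directly: with high probability at time $m_1(x)+C_2$ the maximum lies in $[x,\,x+C_3]$ (the upper cutoff again coming from Lemma~\ref{lemma:beyondcurve}), and then by recurrence of a centered non-lattice one-dimensional walk the lineage from that particle lands in $[x-\tfrac12,x+\tfrac12]$ within bounded additional time $C_4$ with high probability. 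Your route instead re-derives the relevant consequence of \eqref{eq:one-dim asymp} from scratch: truncate the count $N_n(B_x)$ under the envelope, compute first and second moments via many-to-one and many-to-two, apply Paley--Zygmund, and boost over $\Theta(\rho^K)$ subtrees at a fixed generation $K$. Both approaches work; the paper's is materially shorter because it treats the maxima asymptotic as a black box and then uses the purely one-dimensional fact that once the running maximum overshoots $x$ by $O(1)$, recurrence finishes the job without any moment computations.

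Two details in your plan need tightening. First, the first-moment arithmetic has a slip: with $n=m_1(x)+C$ one has $c_1 n - x = \tfrac{3}{2\bl}\log x + c_1 C + o(1)$, so $e^{\bl(c_1n-x)}/\sqrt n \asymp x^{3/2}e^{\bl c_1C}/\sqrt x = x\,e^{\bl c_1 C}$, not $\sqrt x\,e^{\bl c_1 C}$; this does not wreck the argument (the untruncated first moment still diverges and the truncated one is still a large constant $\asymp C e^{\bl c_1 C}$), but the exponent matters when you compare against the truncated second moment, so be careful there. Second, and more substantively, the boosting step requires a version of the Paley--Zygmund lower bound that is uniform over the random positions $\eta_{v,K}(K)$ of the generation-$K$ roots and over the reduced horizon $n-K$. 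Since those roots are $O_\p(1)$ from the origin and $K$ is fixed, this does hold, but it amounts to re-running the truncated moment computation with $(x,n)$ replaced by $(x-\eta_{v,K}(K),\,n-K)$ uniformly over a compact set of displacements; you should state that uniformity explicitly rather than asserting the subtrees ``independently hit $B_x$ with probability bounded below by the previous step.'' The paper avoids this issue entirely because \eqref{eq:one-dim asymp} already has the boosting built in.
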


\begin{proof}    
    For the lower bound, we fix $\ee>0$ and pick $\beta$ large enough such that $\p(G_{n,\beta})<\ee$ for all $n\in\N$, by means of Lemma \ref{lemma:beyondcurve}. In other words, with a probability larger than $1-\ee$, the whole BRW stays below the curve 
    $$k\mapsto \frac{km_n}{n}+\beta+\frac{4}{\bl}(\log(\min(k,n-k)))_+.$$
    It is clear that with $C_1=C_1(\beta)>0$ chosen large enough, 
    $$\max_{1\leq k\leq t_1(x)-C_1}\Big(\frac{km_{t_1(x)-C_1}}{t_1(x)-C_1}+\beta+\frac{4}{\bl}(\log(\min(k,t_1(x)-C_1-k)))_+\Big)\leq x.$$
    This yields $\p(\tau_x\leq t_1(x)-C_1)\leq \p(G_{n,\beta})<\ee$, proving the lower bound.  


    For the upper bound, we fix $\ee>0$. 
Pick $C_2>0$ such that $\p(M_{t_1(x)+C_2}>x)>1-\ee/3$ uniformly in $x$ by \eqref{eq:one-dim asymp} and the convergence in law of the recentered maximum (Theorem 1.1 of \citep{bramson2016convergence}). Using the same analysis for the lower bound, there exists $C_3>0$ such that $\p(M_{t_1(x)+C_2}>x+C_3)<\ee/3$. Therefore, with probability $>1-2\ee/3$, there is $v\in V_{t_1(x)+C_2}$ such that $ x\leq \eta^{(1)}_{v,t_1(x)+C_2}(t_1(x)+C_2)\leq x+C_3$.
 Since the distribution of ${\xi}$ is non-lattice, by the Chung--Fuchs theorem (Theorem 5.4.8 of \citep{durrett2019probability}), the random walk with step distribution $\xi$ initiated from $v$ is recurrent, and hence there exists $C_4>0$ such that the descendants of $v$ reaches $[x-1/2,x+1/2]$ with probability $>1-\ee/3$ within time $C_4$. Combining the above leads to $\p(\tau_x>t_1(x)+C_2+C_4)<\ee$, as desired.
\end{proof}

\subsection{Proof of Theorem \texorpdfstring{\ref{thm:main}}{}}\label{sec:thm1 proof}
We now present the proof of Theorem \ref{thm:main} for $d\geq 2$, provided that Theorem \ref{theorem:concentration} holds true. The proof of Theorem \ref{theorem:concentration} will be the focus of the next section. The following auxiliary results are necessary as explained in our proof strategy.

\subsubsection{The number of particles  near maximum}\label{sec:number of particles}

The goal of this section is to prove the following result on the number of particles near the frontier of a one-dimensional BRW, which might be of independent interest.

\begin{proposition}\label{prop:number of particles}Assume (A1)--(A4). There exists $L>1$ depending only on the law of the BRW such that the following holds conditioned upon survival. Given any $\ee>0$, there exists $C>0$ independent from $n$ and $x$   such that  uniformly for $n$ large enough and for $x\in[2,\sqrt{n}]$, 
    \begin{align}
        \p\left(\#\{v\in V_n : \eta^{(1)}_{v,n}(n)\geq m_n-x\}>Cxe^{\bl x}\mid S\right)<\ee\label{eq:ub number}
    \end{align}
    and 
    \begin{align}
        \p\left(\#\{v\in V_n : \eta^{(1)}_{v,n}(n)\geq m_n-x\}>\frac{1}{C}xe^{\bl x}\mid S\right)>\frac{1}{L}.\label{eq:lb number}
    \end{align}
\end{proposition}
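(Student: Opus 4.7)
The plan is to follow the classical Bramson ``barrier plus ballot'' strategy, treating the count as a restricted sum over particles whose trajectory stays below an explicit upper envelope. Under (A3) together with Lemma~\ref{lemma:edgeworthok} (so that $\xi$ is non-lattice when $d\ge 2$), the projection of the BRW onto the first coordinate is a one-dimensional BRW satisfying the hypotheses of Lemma~\ref{lemma:beyondcurve}. Write $Y:=\#\{v\in V_n:\eta_{v,n}(n)\ge m_n-x\}$ for the count of interest. Given $\varepsilon>0$, by Lemma~\ref{lemma:beyondcurve} pick $\beta$ so large that $\p(G_{n,\beta})<\varepsilon$. On $G_{n,\beta}^c$, $Y$ coincides with the barrier-restricted count
\begin{equation*}
Y^\beta:=\#\Big\{v\in V_n:\eta_{v,n}(n)\ge m_n-x,\ \eta_{v,n}(k)\le\tfrac{km_n}{n}+\beta+\tfrac{4}{c_2}(\log\min(k,n-k))_+\ \forall\,0\le k\le n\Big\},
\end{equation*}
so it suffices to bound $Y^\beta$ two-sidedly, at the cost of an additive error $\varepsilon$ and an overall factor $1/\p(S)$ when passing to conditioning on $S$.

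For the upper bound~\eqref{eq:ub number}, I would invoke the many-to-one identity $\E[Y^\beta]=\rho^n\p(A)$, where $A$ is the event that a random walk $\{S_k\}$ with step distribution $\xi$ ends in $[m_n-x,\infty)$ while staying below the barrier. Tilt by $dQ/d\p=\exp(c_2 S_n-n\log\phi_\xi(c_2))$; by (A4) and Legendre duality, $W_k:=S_k-kc_1$ is mean-zero under $Q$. The endpoint and barrier conditions translate into a ballot problem for $W$: remain below a logarithmically perturbed constant barrier and end in a prescribed window. The Bramson ballot estimate (cf.\ Proposition~2.3 of \citep{bramson2016convergence}) then yields
\begin{equation*}
\rho^n\,\p\bigl(S_n\in[m_n-x+j,\,m_n-x+j+1]\cap A\bigr)\asymp\beta(x-j+\beta)e^{c_2(x-j)},\qquad j\ge 0.
\end{equation*}
Summing the geometric series in $j$ gives $\E[Y^\beta]\asymp x\,e^{c_2 x}$, and Markov's inequality combined with $\p(\cdot\mid S)\le\p(\cdot)/p$ establishes~\eqref{eq:ub number} provided $C$ is chosen large.

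For the lower bound~\eqref{eq:lb number}, I would apply the second moment method to $Y^\beta$. Decomposing $\E[(Y^\beta)^2]$ over the generation $k$ at which a pair $(v,w)$ of distinct particles splits, the two subtrees rooted at their most recent common ancestor are conditionally independent, and a two-stage application of the ballot estimate (to the ancestral path up to generation $k$ and separately to each daughter trajectory from $k$ to $n$) yields $\E[(Y^\beta)^2]\ll(\E[Y^\beta])^2$. Paley--Zygmund then produces $\p(Y^\beta\ge c\,x\,e^{c_2 x})\ge c'$ for absolute constants $c,c'>0$. To transfer to the conditional probability, note that $\{Y^\beta>0\}\subseteq\{\#V_n>0\}$ and, by Kesten--Stigum (ensured by (A1)), $\p(S\mid\#V_n>0)\to 1$; hence for $n$ sufficiently large $\p(Y^\beta\ge c\,x\,e^{c_2 x}\mid S)\ge c'/(2p)=:1/L$.

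The principal obstacle is the second moment estimate: the ballot-type bounds must be controlled uniformly over MRCA generations $k\in\{0,\ldots,n\}$ and over $x\in[2,\sqrt{n}]$, and one must verify that pairs contributing to $(Y^\beta)^2$ are dominated by those whose split occurs near the root, so that the factorization $\E[(Y^\beta)^2]\asymp(\E[Y^\beta])^2$ genuinely emerges rather than being overwhelmed by a lower-order correction that could in principle dominate when $x$ is near $\sqrt{n}$. The constraint $x\le\sqrt{n}$ is precisely what keeps all Edgeworth and ballot inputs within their regime of validity.
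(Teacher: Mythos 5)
Your proposal tracks the paper's own proof closely: both restrict to Bramson's barrier event via Lemma~\ref{lemma:beyondcurve}, use a many-to-one identity plus exponential tilting and a ballot estimate for the first moment together with Markov for the upper bound, and run a second moment/Paley--Zygmund argument (decomposing pairs by the generation of the most recent common ancestor) for the lower bound. Two small remarks. First, for the lower bound the paper counts particles in a single unit window $[m_n-x,\,m_n-x+1)$ with the simpler barrier $P_n=\{\eta_{v,n}(k)<km_n/n\ \forall k\}$, whereas you count over the full half-line $[m_n-x,\infty)$; your version works since the mass concentrates near $m_n-x$, but it forces you to carry an extra pair of endpoint sums through the second moment, which the slab truncation avoids. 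Second, the transfer to the conditional measure has nothing to do with Kesten--Stigum: what is used is only $\p(\#V_n>0,\,S^\complement)\to 0$, the elementary fact that extinction is a.s.\ finite on $S^\complement$ (the paper cites Theorem~13.3 of \citep{athreya2004branching} for the exponential rate); Kesten--Stigum concerns non-degeneracy of the limit of $\#V_n/\rho^n$ and plays no role here. Your stated conclusion $\p(S\mid\#V_n>0)\to 1$ is nevertheless correct.
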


\begin{remark}\label{remark:number of particles}
Proposition \ref{prop:number of particles} may be compared against \citep{biggins1979growth,rouault1993precise}, where it is proven that for a location that is $\Omega(n)$ away from $m_n$, the number of particles therein has the same order as its expectation. Proposition \ref{prop:number of particles} shows that this is not the case for locations that are $O(\sqrt{n})$ near $m_n$. 
    On the other hand, the number of particles that are $O(\sqrt{n})$ away from the origin can be described by the BRW central limit theorem \citep{biggins1990central}. An interesting question would be to analyze the phase transitions between those regimes. We also expect that \eqref{eq:lb number} holds if we replace its right-hand side by $1-\ee$, but the current version suffices for our purpose. 
\end{remark}

In the rest of this section, we assume that $\p$ is the probability measure conditioned upon the survival event $S$. 
Following the setup of Section 2.2 of \citep{bramson2016convergence}, we introduce $\lambda_n$ which is the value of $\lambda$ where the supremum of \eqref{eq:ratef} is attained with $x=m_n/n$. By (15) of \citep{bramson2016convergence}, it holds $0\leq \bl-\lambda_n\ll (\log n)/n$. 
For a given $v\in V_n$, let $\qn$ be a probability measure on length-$n$ random walk paths defined by
\begin{align}
    \frac{\d\p}{\d\qn}:=e^{-\lambda_n(\eta^{(1)}_{v,n}(n)-m_n)-nI(m_n/n)}\asymp n^{3/2}\rho^{-n}e^{-\lambda_n(\eta^{(1)}_{v,n}(n)-m_n)}.\label{eq:dpdqn}
\end{align}
  It follows that under $\qn$, $\{\eta^{(1)}_{v,n}(k)-km_n/n\}_{k=0,\dots,n}$
is a mean zero random walk. 
Let 
$$Q_{n,\beta}:=\left\{v\in V_n : \text{ for any }0\leq k\leq n,~\eta^{(1)}_{v,n}(k)<\frac{km_n}{n}+\beta+\frac{4}{\bl}(\log(\min(k,n-k)))_+\right\}$$
and define 
\begin{align}
   g(n,\beta,x):={\E[\#\{v\in Q_{n,\beta} : \eta^{(1)}_{v,n}(n)\geq m_n-x\}]}.\label{eq:gndef}
\end{align}
We first reduce the proof of the upper bound \eqref{eq:ub number} into proving the following estimate on $g(n,\beta,x)$.
\begin{proposition}\label{prop:gn}Assume (A1)--(A4). 
Uniformly in $x\in[2,\sqrt{n}]$, $$g(n,\beta,x)\ll \beta (x+\beta)e^{\bl x}.$$
\end{proposition}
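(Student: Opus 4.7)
The plan is to reduce the first-moment computation of $g(n,\beta,x)$ to a barrier estimate for a mean-zero random walk by the many-to-one identity and the exponential tilt \eqref{eq:dpdqn}. First, I would invoke the many-to-one lemma, which yields
$$g(n,\beta,x)=\rho^n\,\p\bigl(\mathcal B_{n,\beta}^S,\,S_n\geq m_n-x\bigr),$$
where $S_\cdot$ is a random walk with the increment distribution of $\xi$ and $\mathcal B_{n,\beta}^S$ encodes the barrier constraint $S_k<km_n/n+\beta+(4/\bl)(\log\min(k,n-k))_+$ for every $0\leq k\leq n$. Switching to $\qn$ via \eqref{eq:dpdqn} gives
$$g(n,\beta,x)=\rho^ne^{-nI(m_n/n)}\,\E_\qn\!\left[e^{-\lambda_n\tilde S_n}\,\mathbf 1_{\tilde S_n\geq -x,\,\widetilde{\mathcal B}_{n,\beta}}\right],$$
where $\tilde S_k:=S_k-km_n/n$ is mean-zero under $\qn$ and $\widetilde{\mathcal B}_{n,\beta}$ is the shifted (centered) barrier event.

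Next, I would control the prefactor by Taylor expanding the rate function around $c_1$. Using $I(c_1)=\log\rho$, $I'(c_1)=\bl$, and $m_n/n=c_1-\tfrac{3}{2\bl n}\log n$, one obtains $nI(m_n/n)=n\log\rho-\tfrac{3}{2}\log n+O((\log n)^2/n)$, hence $\rho^n e^{-nI(m_n/n)}\asymp n^{3/2}$. Combined with $|\lambda_n-\bl|\ll (\log n)/n$ and $|\tilde S_n|\leq\max(x,\beta)\leq\sqrt n$ on the event of interest, this shows $e^{-\lambda_n\tilde S_n}\asymp e^{-\bl\tilde S_n}$ on the support of the indicator. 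Slicing the expectation over integer values of $-\tilde S_n$,
$$g(n,\beta,x)\ll n^{3/2}\sum_{y=-\lfloor\beta\rfloor}^{\lceil x\rceil} e^{\bl y}\,\p_\qn\!\bigl(\tilde S_n\in[-y-1,-y],\,\widetilde{\mathcal B}_{n,\beta}\bigr).$$

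The heart of the argument is then a ballot-type estimate for the mean-zero walk $\tilde S$ staying below the logarithmically rising curve $\beta+(4/\bl)(\log\min(k,n-k))_+$ with a prescribed endpoint in a unit interval. What I need is
$$\p_\qn\!\bigl(\tilde S_n\in[-y-1,-y],\,\widetilde{\mathcal B}_{n,\beta}\bigr)\ll \beta\,(y+\beta)\,n^{-3/2}\qquad\text{uniformly in }y\in[-\beta,x].$$
This is an extension of the classical ballot theorem (both endpoints constrained to an interval and the middle below $\beta$ gives the $\beta\,(y+\beta)/n^{3/2}$ scaling); with the additional upward log-allowance in the middle, the bound only becomes easier, and I would assemble it from the toolbox already used in \citep{bramson2016convergence} (Aidékon--Jaffuel style barrier estimates combined with a local limit theorem; the non-lattice property from (A3) legitimises the latter). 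Inserting this estimate,
$$g(n,\beta,x)\ll n^{3/2}\cdot\frac{\beta}{n^{3/2}}\sum_{y=-\beta}^{x}(y+\beta)\,e^{\bl y}\ll \beta\,(x+\beta)\,e^{\bl x},$$
since the geometric weight $e^{\bl y}$ concentrates the arithmetic-exponential series at its top term $y\asymp x$, while the negative-$y$ range contributes at most $O(\beta^2)$, safely absorbed into $\beta(x+\beta)e^{\bl x}$ because $x\geq 2$.

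The main obstacle is verifying the ballot estimate with the correct joint dependence on $\beta$ and $y$, uniformly over $y\in[-\beta,x]$ and over $n$, and in particular checking that the log-curvature of the barrier does not spoil the $n^{-3/2}$ scaling. The constraint $x\leq\sqrt n$ is precisely what is needed so that (i) the tilt discrepancy $|(\lambda_n-\bl)\tilde S_n|=o(1)$ on the relevant range and (ii) the local limit theorem can be applied at the endpoint; outside that range the large-deviation regime would take over and the structure of the answer would change.
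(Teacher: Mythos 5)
Your proposal is correct and follows essentially the same route as the paper's proof. The paper applies the many-to-one identity, slices the endpoint into unit intervals, and then recognizes each sliced probability as exactly the quantity $\chi_{n,n}^{\p}(i)$ defined below (15) of \cite{bramson2016convergence}, directly invoking their estimate (18), namely $\chi_{n,n}^{\p}(i)\ll \beta(\beta-i+2)\rho^{-n}e^{-\bl i}$, which is then summed geometrically exactly as you do. Your argument unpacks that cited estimate explicitly: the tilt $\d\p/\d\qn\asymp n^{3/2}\rho^{-n}e^{-\lambda_n(\cdot)}$, the prefactor $\rho^n e^{-nI(m_n/n)}\asymp n^{3/2}$, the replacement $\lambda_n\to\bl$ valid on the slice because $x\le\sqrt n$, and the ballot estimate $\p_\qn(\tilde S_n\in[-y-1,-y],\widetilde{\mathcal B}_{n,\beta})\ll\beta(y+\beta)n^{-3/2}$; these are precisely the ingredients underlying (15)--(18) of \cite{bramson2016convergence}, so the two routes coincide modulo whether one re-derives or cites. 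The one piece you leave as a pointer rather than a proof --- the two-sided ballot estimate for a mean-zero walk under a logarithmically bulging barrier --- is also the piece the paper does not reprove but cites; so no genuine gap, though if you wanted a self-contained argument you would need to supply that ballot lemma (e.g.\ as in Lemma 2.3 of \cite{bramson2016convergence}) rather than gesture at the toolbox.
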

\begin{proof}[Proof of \eqref{eq:ub number} given Proposition \ref{prop:gn}] 
    For fixed $C,\ee>0$, we pick $\beta$ large enough so that $\p(G_{n,\beta})<\ee/2$ by Lemma \ref{lemma:beyondcurve}. On the event $G_{n,\beta}^c$, $Q_{n,\beta}=V_n$. By Proposition \ref{prop:gn}, uniformly for $n$ and $x\in[2,\sqrt{n}]$,
    $$\E\left[\#\{v\in V_n : \eta^{(1)}_{v,n}(n)\geq m_n-x\}\bone_{G_{n,\beta}^c}\right]\ll \beta (x+\beta)e^{\bl x}.$$
    Markov's inequality then yields that for some $L>0$,
    $$\p\left(G_{n,\beta}^c\cap\left\{\#\{v\in V_n : \eta^{(1)}_{v,n}(n)\geq m_n-x\}>L\beta (x+\beta)e^{\bl x}\right\}\right)<\frac{\ee}{2}.$$This yields \eqref{eq:ub number}.
\end{proof}



\begin{proof}[Proof of Proposition \ref{prop:gn}] 
 Let $\{S_k\}_{1\leq k\leq n}$ be a random walk with increment $\xi^{(1)}$. By the many-to-one formula, we have
    \begin{align*}
        &\hspace{0.5cm}\E[\#\{v\in Q_{n,\beta} : \eta^{(1)}_{v,n}(n)\geq m_n-x\}]\\
        &=\rho^n\p\Big(S_k<\frac{km_n}{n}+\beta+\frac{4}{\bl}(\log(\min(k,n-k)))_+\text{ for }1\leq k\leq n;\,S_n\geq m_n-x\Big)\\
        &=\sum_{j=0}^{x+\beta-1}\rho^n\p\Big(S_k<\frac{km_n}{n}+\beta+\frac{4}{\bl}(\log(\min(k,n-k)))_+\text{ for }1\leq k\leq n;\\
        &\hspace{4cm}S_n\in[m_n+\beta-j-1,m_n+\beta-j)\Big).
    \end{align*}
By substituting $i=\beta-j$, the latter probability is precisely $\chi_{n,n}^{\p}(i)$ defined below (15) in \citep{bramson2016convergence}. It is proved in (18) therein the estimate
$$\chi_{n,n}^{\p}(i)\ll \beta(\beta-i+2)\rho^{-n}e^{-\bl i},$$ where our assumption $x\leq\sqrt{n}$ is used. 
This leads to 
$$\E[\#\{v\in Q_{n,\beta} : \eta^{(1)}_{v,n}(n)\geq m_n-x\}]\ll \sum_{j=0}^{x+\beta}\beta(j+2)e^{-\bl(\beta-j)}\ll  \beta (x+\beta)e^{\bl x},$$
completing the proof.    
\end{proof}

\begin{proof}[Proof of \eqref{eq:lb number}]
    We apply the second moment method. Define$$P_n:=\left\{v\in V_n : \text{ for any }0\leq k\leq n,~\eta^{(1)}_{v,n}(k)<\frac{km_n}{n}\right\}.$$
For $x\in[2,\sqrt{n}]$ and $v\in V_n$, define the event
$$H_{v,n}(x):=\{v\in P_n,~\eta^{(1)}_{v,n}(n)\in[m_n-x,m_n-x+1)\}$$
and $\Delta_{n,x}:=\sum_{v\in V_n}\bone_{H_{v,n}(x)}$. It follows that \begin{align}
    \#\{v\in V_n:  \eta^{(1)}_{v,n}(n)\geq m_n-x\}\geq \Delta_{n,x}.\label{eq:Un lb}
\end{align} 

Let us first compute the first moment $\E[\Delta_{n,x}]$. Using \eqref{eq:dpdqn}, we have
\begin{align*}
    \p(H_{v,n}(x))&\gg n^{3/2}\rho^{-n}\E_{\qn}\left[e^{-\lambda_n(\eta^{(1)}_{v,n}(n)-m_n)}\bone_{H_{v,n}(x)}\right]\\
    &\gg n^{3/2}\rho^{-n}\E_{\qn}\left[e^{\bl x}\bone_{H_{v,n}(x)}\right]=n^{3/2}\rho^{-n}e^{\bl x}\qn(H_{v,n}(x)).
\end{align*}
By the ballot theorem, in the form of Lemma 2.1 of \citep{bramson2016convergence}, We have
$$\qn(H_{v,n}(x))\gg n^{-3/2}\max(x-1, 1)\gg n^{-3/2}x.$$
Combining the above yields 
\begin{align}
    \E[\Delta_{n,x}]= \rho^n\p(H_{v,n}(x))\gg xe^{\bl x}.\label{eq:1stmoment}
\end{align}

Next we estimate the second moment $\E[\Delta_{n,x}^2]$. Observe that by (A1) (see (29) of \citep{bramson2016convergence}),
\begin{align}
    \E[\Delta_{n,x}^2]\ll \E[\Delta_{n,x}]+\rho^n\sum_{s=1}^n\rho^s \p(H_{v,n}(x)\cap H_{w,n}(x)\text{ for }v\sim_s w),\label{eq:rhopower}
\end{align}
where $v\sim_s w$ means that  the distance of $v$ and $w$ in the genealogical tree is equal to $2s$. To bound the latter probability we condition on the location of the common ancestor of $v$ and $w$, $\eta^{(1)}_{v,n}(n-s)$. We have $\text{for }v\sim_s w$,
\begin{align*}
&\hspace{0.5cm}\p(H_{v,n}(x)\cap H_{w,n}(x))\\
&\leq \sum_{j=0}^\infty \p\left(\eta^{(1)}_{v,n}(k)\leq \frac{km_n}{n}\text{ for }k\leq n-s,~\eta^{(1)}_{v,n}(n-s)-\frac{(n-s)m_n}{n}\in [-(j+1),-j)\right)\\
    &\hspace{4cm}\times \left(\sup_{y\in [-(j+1),-j)}P(n,s,x,y)\right)^2,
\end{align*}
where \begin{align*}
    P(n,s,x,y)&:=\p\Bigg(\eta^{(1)}_{v,n}(n-s+\ell)\leq \frac{(n-s+\ell)m_n}{n}\text{ for }0\leq\ell\leq s,\\
    &\hspace{1.2cm}\eta^{(1)}_{v,n}(n)\in[m_n-(x+1),m_n-x)\mid \eta^{(1)}_{v,n}(n-s)-\frac{(n-s)m_n}{n}=y\Bigg).
\end{align*}
With the same change of measure and ballot theorem argument as above, we get
\begin{align*}
    &\hspace{0.5cm}\p\left(\eta^{(1)}_{v,n}(k)\leq \frac{km_n}{n}\text{ for }k\leq n-s,~\eta^{(1)}_{v,n}(n-s)-\frac{(n-s)m_n}{n}\in [-(j+1),-j)\right)\\
    &\ll e^{3(n-s)(\log n)/(2n)}\rho^{-(n-s)}e^{\lambda_n j}\\
    &\hspace{2cm}\times (\max(1,n-s))^{-3/2}\max\left((m_{n-s}-(n-s)m_n/n+j),1\right)\\
    &\ll \rho^{-(n-s)}e^{\lambda_n((3/(2\bl))((n-s)(\log n)/n-\log(n-s))+j)}\\
    &\hspace{2cm}\times\max\left(\big(j+\frac{(n-s)\log n}{n}-\log(n-s)\big),1\right)\\
    &\ll \rho^{-(n-s)}je^{\bl j-3\log(n-s)/2+3(n-s)\log n/(2n)},
\end{align*}where in the last step we use $0\leq \bl-\lambda_n\ll (\log n)/n$. 
In a similar manner, (while using (6) of \citep{bramson2016convergence}), we obtain the estimate
\begin{align*}
    P(n,s,x,y)\ll \rho^{-s}jxe^{\bl(x-j)+3((s\log n)/n-\log s)/2}.
\end{align*}
In conclusion, we arrive at
\begin{align}
    \E[\Delta_{n,x}^2]&\ll xe^{\bl x}+\sum_{s=1}^n\sum_{j=0}^\infty je^{\bl j-3\log(n-s)/2+3(n-s)\log n/(2n)}\nonumber\\
    &\hspace{4cm}\times\left(jxe^{\bl(x-j)+3((s\log n)/n-\log s)/2}\right)^2\nonumber\\
    &\ll xe^{\bl x}+x^2e^{2\bl x}\sum_{s=1}^n \frac{n^{3/2}e^{(3s\log n)/(2n)}}{(\max(1,n-s))^{3/2}s^3}\ll x^2e^{2\bl x},\label{eq:second moment last step}
\end{align}
where in the last step we argue similarly as in the proof of Lemma 2.7 of \citep{bramson2016convergence}.
 Combined with \eqref{eq:Un lb}, \eqref{eq:1stmoment}, and the Paley-Zygmund inequality leads to \eqref{eq:lb number}. 
\end{proof}

\subsubsection{Large deviation computations}\label{sec:rw}

In the following, we let $\{(X_j,\bY_j)\}_{j\in\N}$ be a sequence of i.i.d.~random variables with the same distribution as $\bxi$, and $(F_n,\bJ_n)=\sum_{j=1}^n(X_j,\bY_j)$. Recall the definition of $m_n$ from \eqref{eq:one-dim asymp}. We may slightly abuse notation and use $B_\bx(r)$ to denote the ball centered at $\bx\in\R^{d-1}$ with radius $r$ in dimension $d-1$ (instead of dimension $d$).

\begin{proposition}\label{prop:ulb gaussian approx}
   Fix a large constant $C>0$ and a dimension $d\geq 1$. Suppose that the  $\R^d$-valued  random variable $\bxi$ satisfies conditions (A2)--(A4) from Section \ref{sec:fpt_for_brw}. 
   \begin{enumerate}[(i)]
      
   \item \sloppy Uniformly for  any positive sequence $u(n)\leq C\sqrt{n}$ and any $c=c(n)\in [-C\log n,C\log n]$, 
    \begin{align}
        \p(\bJ_n\in B_\z(u(n))\mid F_n\geq m_n+c)\ll u(n)^{d-1}n^{-(d-1)/2}.\label{eq:ub}
    \end{align}
    \item Fix $a(n)\gg 1$ and $a(n)=o(\log n)$.  Uniformly for  $\by$ with $\n{\by}\leq C\sqrt{n}$ and  $c\in [-C\log n,C\log n]$,  \begin{align}
        \p(\bJ_n\in B_\by(1)\mid F_n\in[ m_n+c,m_n+c+a(n)])\gg n^{-(d-1)/2}.\label{eq:lb}
    \end{align}

 \item For any $\ee>0$, there exists $K>0$ such that uniformly for $c\in [-C\log n,C\log n]$,
 \begin{align}
     \p(\bJ_n\not \in B_\z(K\sqrt{n})\mid F_n\geq m_n+c)<\ee.\label{eq:ub2}
 \end{align}
   \end{enumerate}
 
\end{proposition}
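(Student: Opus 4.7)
The plan is to apply an exponential change of measure in the first coordinate, using the tilt parameter $\lambda_n$ from Section \ref{sec:number of particles} characterized by $(\log\phi_\xi)'(\lambda_n) = m_n/n$. Define
$$\frac{d\widetilde{\p}_n}{d\p} = \prod_{j=1}^n \frac{e^{\lambda_n X_j}}{\phi_\xi(\lambda_n)},$$
so that under $\widetilde{\p}_n$ the pairs $(X_j,\bY_j)$ remain i.i.d.\ with $\E_{\widetilde{\p}_n}[X_1]=m_n/n$.

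The key observation is that the spherical symmetry of $\bxi$ endows the tilted law with a block-diagonal covariance. Since the tilt depends only on $X_1$, the conditional distribution of $\bY_1$ given $X_1$ is unchanged and remains spherically symmetric in $\R^{d-1}$. This yields $\E_{\widetilde{\p}_n}[\bY_1]=\z$, $\cov_{\widetilde{\p}_n}(X_1,\bY_1)=\z$, and $\cov_{\widetilde{\p}_n}(\bY_1)=\sigma_Y^2 I_{d-1}$ for some $\sigma_Y^2>0$; I also write $\sigma_X^2:=\var_{\widetilde{\p}_n}(X_1)$, which converges to a positive constant as $n\to\infty$. Assumption (A4) supplies finite exponential moments near $\lambda_n$, and (A3) together with $d\geq 2$ forces $\bxi$ to be non-lattice via Lemma \ref{lemma:edgeworthok}. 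A multivariate (local) central limit theorem thus applies under $\widetilde{\p}_n$; in particular, $X$ and $\bY$ become asymptotically independent on the $\sqrt n$ scale.

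For parts (i) and (iii), I write
$$\p(\bY\in A\mid X\geq m_n+c)=\frac{\E_{\widetilde{\p}_n}[e^{-\lambda_n(X-m_n)}\bone_{\{X\geq m_n+c\}}\bone_{\{\bY\in A\}}]}{\E_{\widetilde{\p}_n}[e^{-\lambda_n(X-m_n)}\bone_{\{X\geq m_n+c\}}]}.$$
Since the weight $e^{-\lambda_n(X-m_n)}\bone_{\{X\geq m_n+c\}}$ depends only on $X$, the asymptotic independence lets the $X$-factors cancel, giving $\p(\bY\in A\mid X\geq m_n+c)\asymp\widetilde{\p}_n(\bY\in A)$ uniformly for $|c|\leq C\log n$. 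The Gaussian approximation for $\bY$ under $\widetilde{\p}_n$ then yields (i) for $A=B_\z(u(n))$ with $u(n)\leq C\sqrt n$ and (iii) for $A=B_\z(K\sqrt n)^c$ with $K$ sufficiently large. For part (ii), the same change of measure reduces matters to a joint local CLT bound: uniformly in $|c|\leq C\log n$ and $\n{\by}\leq C\sqrt n$, the joint density of $(X,\bY)$ at $(m_n+c,\by)$ under $\widetilde{\p}_n$ is comparable to $(2\pi n\sigma_X^2)^{-1/2}(2\pi n\sigma_Y^2)^{-(d-1)/2}$; integrating this density over $[m_n+c,m_n+c+a(n)]\times B_\by(1)$ and taking the ratio with the $\bY$-unconstrained analogue produces the $n^{-(d-1)/2}$ lower bound.

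The main technical obstacle is establishing a local CLT uniformly in the parameters $c$, $\by$, and $n$, particularly because $\lambda_n$ itself varies with $n$ (though $\lambda_n\to c_2$, so the family of tilted laws stays well controlled). A Stone-type local CLT or Edgeworth expansion for non-lattice sums with uniform exponential moments should suffice, but the error terms must be tracked carefully to preserve the required uniformity over the full ranges of $c$ and $\by$.
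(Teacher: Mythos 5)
Your proposal is essentially the paper's proof: a one-coordinate exponential tilt, the observation that spherical symmetry keeps the tilted law of $\bY_i$ centered, uncorrelated with $X_i$, and isotropic, and an Edgeworth/local CLT for the tilted walk (the paper's Lemma~\ref{lemma:edgeworthok}). Two small remarks on where you diverge cosmetically and where you would need to tighten. First, you tilt by $\lambda_n$ so that $X_i - m_n/n$ is exactly centered, whereas the paper tilts by the fixed $c_2 = I'(c_1)$ and absorbs the resulting $\tfrac{3}{2c_2}\log n$ shift into the box endpoints; since $0\le c_2-\lambda_n\ll(\log n)/n$ both choices work. Second, the step "the $X$-factors cancel, giving $\p(\bY\in A\mid X\ge m_n+c)\asymp\widetilde{\p}_n(\bY\in A)$" cannot be justified merely by asymptotic independence on the $\sqrt n$ scale: the weight $e^{-\lambda_n(X-m_n)}$ concentrates the effective support of $X-m_n$ on an $O(\log n)$ window, far below the diffusive scale, so the factorization must be done locally. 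The paper implements precisely this: decompose over unit slabs $\{X-m_n\in[j-1,j)\}$, apply Lemma~\ref{lemma:edgeworthok} to each product box $E_{j,n}$ uniformly in $j\le\sqrt n$ (bounding by $1$ for larger $j$), and sum the geometric series in $e^{-c_2 j}$ --- this is the careful error-tracking you correctly flag as the remaining technical step.
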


The proof of Proposition \ref{prop:ulb gaussian approx} is deferred to the appendix.

\subsubsection{Proof of Theorem \texorpdfstring{\ref{thm:main}}{} conditioned on Theorem \texorpdfstring{\ref{theorem:concentration}}{}}
\label{sec:proof}
Recall \eqref{eq:A}. Let us define 
$$A_1(x):=A(x)+\frac{2}{\bl c_1}\,\log\log x,\quad A_2(x):=A(x)-\frac{1}{\bl c_1}\,\log\log x.$$
In view of Theorem \ref{theorem:concentration}, it suffices to prove that there exists $L>0$ such that for any $a(x)$ that tends to infinity, and any $\ee>0$, there exists $N>0$ such that for any $x\geq N$,     \begin{align}
    \p(\tau_x>A_1(x)+a(x))\leq 1-\frac{1}{L}\label{eq:toprove1}
\end{align}
and 
\begin{align}
    \p(\tau_x<A_2(x)-a(x))<\ee.\label{eq:toprove2}
\end{align}
    In the following, we fix $\ee>0$ and $a(x)$. \\

\emph{Proof of the upper bound.} Define $$\alpha(x):=\frac{d-1}{2\bl}\,\log x-\frac{1}{\bl}\,\log\log x+c_1a(x)$$and 
$$W_n:=\left\{v\in V_{n}: \bet_{v,n}(n)\in\Big( m_n-\alpha(x),m_n\Big)\times B_\z(\sqrt{x})\right\}.$$
We first prove that for $x$ large enough and some large constant $L_1$ to be determined,
\begin{align}
    \p\big(\#W_{(\log x)^2}<L_1x^{({d-1})/{2}}\big)<1-\frac{1}{L},\label{eq:Wnlb}
\end{align}
for some constant $L>0$ depending on $L_1$ only. 
Indeed, let
$$\widetilde{W}_n:=\left\{v\in V_{n}: \eta^{(1)}_{v,n}(n)\in \Big(m_n-\alpha(x),m_n\Big) \right\}.$$
It follows from Proposition \ref{prop:number of particles} that for $x$ large enough, with $n=(\log x)^2$, $\p(\#\widetilde{W}_n<L_1x^{(d-1)/2})<1-1/L$. 
Comparing the definitions of $W_n$ and $\widetilde{W}_n$, it remains to show that at generation $n=(\log x)^2$, the maximum norm of the BRW is smaller than $\sqrt{x}$ with probability $1-o(1)$. But this is a direct consequence of Theorem 1.1 of \citep{zhang2024large}.


The plan is to evolve (on the event of \eqref{eq:Wnlb})  the  $L_1x^{(d-1)/2}$ particles in $W_{(\log x)^2}$ (independently) so that at least one of those families has a descendant landing in $B_x$ at time $A_1(x)+a(x)$ with high probability, where we recall that $B_x$ is the ball of radius one centered at $(x,\z)$. Let us now fix  $w\in W_{(\log x)^2}$. Define $\tau'(x):=A_1(x)+a(x)-(\log x)^2+K_1$ for some large constant $K_1>0$ (to be determined) and 
$$p(w):=\p\left(  \bet_{v,\tau'(x)}(\tau'(x))\in B_x-\bet_{w,(\log x)^2}((\log x)^2)\text{ for some }v\in V_{\tau'(x)}\right),$$
which is the probability that at least one descendant of $w$ lands in $B_x$ at time $A_1(x)+a(x)+K_1$, or equivalently, the probability that a BRW reaching the set $B_x-\bet_{w,(\log x)^2}((\log x)^2)$ at generation $\tau'(x)$. By independence and by picking the  constant $L_1$ in \eqref{eq:Wnlb} large enough, it suffices to prove $p(w)\gg x^{-(d-1)/2}$ for $w\in W_{(\log x)^2}$. Note that $\eta^{(1)}_{w,(\log x)^2}((\log x)^2)\in (m_{(\log x)^2}-\alpha(x),m_{(\log x)^2})$ and that for $K_1$ chosen large enough,
\begin{align*}
    m_{\tau'(x)}&=x + \frac{d-1}{2c_2} \log x + \frac{2}{c_2} \log \log x - c_1 (\log x)^2 + c_1 a(x)+c_1K_1+\frac{3\log c_1}{2c_2}+o(1)\\
    &\geq x + \frac{d-1}{2c_2} \log x + \frac{2}{c_2} \log \log x - c_1 (\log x)^2 + c_1 a(x)+1\\
    &= x-(m_{(\log x)^2}-\alpha(x))+1.
\end{align*}
Therefore, the asymptotic of the maximum at time $\tau'(x)$, $m_{\tau'(x)}$, is at least $x+1-\eta^{(1)}_{w,(\log x)^2}((\log x)^2)$. Using the lower bound on $\Delta_{n,x}$ derived in the proof of \eqref{eq:lb number}, there is some constant $L>0$ such that
$$\p\left(  \eta^{(1)}_{v,\tau'(x)}(\tau'(x))\in \big(x-\frac{1}{2},x+\frac{1}{2}\big)-\eta^{(1)}_{w,(\log x)^2}((\log x)^2)\text{ for some }v\in V_{\tau'(x)}\right)\geq\frac{1}{L}.$$
Conditioned on the above event, Proposition \ref{prop:ulb gaussian approx}(ii) shows that the particle $v$ that realizes this event has a chance of $\gg x^{-(d-1)/2}$ to land in $B_x-\bet_{w,(\log x)^2}((\log x)^2)$ at time $\tau'(x)$, leading to $p(w)\geq x^{-(d-1)/2}/L$. This establishes \eqref{eq:toprove1}. \\

\emph{Proof of the lower bound.} Our goal is to bound $\p(\tau_x\leq A_2(x)-a(x) )$ from above. 
According to Propositions \ref{prop:number of particles} and  \ref{prop:ulb gaussian approx}, at time $A_2(x) $ there will be less than $O(1)$ number of particles that lie in $[x,\infty)\times B_\z(1)$ in expectation. We need to argue that  other particles that stayed in $B_x$ earlier  but currently do not stay in $[x,\infty)\times B_\z(1)$ do not contribute asymptotically. This is because most of the $O(1)$ particles will arrive not much earlier than $t_1(x)+t_2(x)$, and will not move quickly.

 Consider $n=A_2(x)$. Recall that $\ee>0$ and $a(x)\to\infty$ are fixed. 
We define $T_{v,n}(x):=\min\{0\leq k\leq n: \eta^{(1)}_{v,n}(k)\geq x\}$ and for an integer $j\in[a(x),t_2(x)]$, 
$$V_{n,j}:=\big\{v\in V_n:  T_{v,n}(x)=A_2(x)-j\big\}.$$
It follows from Lemma \ref{lemma:beyondcurve} that for any $\ee>0$, there exists  $L_2$ large enough such that
\begin{align}
    &\hspace{0.5cm}\p(\tau_x\leq A_2(x)-a(x))\nonumber\\
    &\leq \p\left(\bigcup_{j=a(x)}^{t_2(x)+L_2}\bigcup_{v\in V_{n,j}}\bigcup_{T_{v,n}(x)\leq k\leq n}\{\bet_{v,n}(k)\in B_x\}\right)+\p\left(M_{A_2(x)-t_2(x)-L_2}\geq x\right)\nonumber\\
    &\leq \sum_{j=a(x)}^{t_2(x)+L_2}\p\left(\bigcup_{v\in V_{n,j}}\bigcup_{T_{v,n}(x)\leq k\leq n}\{\bet_{v,n}(k)\in B_x\}\right)+\frac{\ee}{4}.\label{eq:1}
\end{align}
Let us further decompose the event $\{\bet_{v,n}(k)\in B_x\}$ depending on the location at the first time the random walk $\{\bet_{v,n}(k)\}_{0\leq k\leq n}$ reaches $[x,\infty)\times\R^{d-1}$,  whether $\bet_{v,n}(T_{v,n}(x))\in [x,\infty)\times B_\z(c_3j)$ with some $c_3>0$ to be determined. 

First, with $j$ fixed,
\begin{align*}
    &\hspace{0.5cm}\p\left(\bigcup_{v\in V_{n,j}}\bigcup_{T_{v,n}(x)\leq k\leq n}\{\bet_{v,n}(k)\in B_x,~\bet_{v,n}(T_{v,n}(x))\in [x,\infty)\times B_\z(c_3j)\}\right)\\
    &\leq \p\left(\bigcup_{v\in V_{n,j}}\{\bet_{v,n}(A_2(x)-j)\in [x,\infty)\times B_\z(c_3j)\}\right)\\
    &= \p\left(\bigcup_{v\in V_{A_2(x)-j}}\{\bet_{v,A_2(x)-j}(A_2(x)-j)\in [x,\infty)\times B_\z(c_3j)\}\right).
\end{align*}
Using Proposition \ref{prop:number of particles} (while adjusting the proof suitably), we see that for some $L_3>0$,
\begin{align*}
    &\p\left(\#\{v\in V_{A_2(x)-j}: \eta^{(1)}_{v,A_2(x)-j}(A_2(x)-j)\geq x\}\geq L_3j^2e^{-\bl c_1 j}n^{(d-1)/2}\right) <\frac{\ee}{10j^2}.
\end{align*}
Applying Proposition \ref{prop:ulb gaussian approx}(iii) and the union bound then yields
\begin{align*}
    &\p\left(\bigcup_{v\in V_{A_2(x)-j}}\{\eta^{(1)}_{v,A_2(x)-j}(A_2(x)-j)\in [x,\infty)\times B_\z(c_3j)\}\right)\\
    &\hspace{6cm}\leq \frac{\ee}{10j^2}+L_3j^2e^{-\bl c_1 j}(c_3j)^{d-1}.
\end{align*}
 Second, using that $j\geq a(x)$ and $a(x)\to\infty$ as $x\to\infty$, as well as Theorem 3.2 of \citep{gantert2018large}, we have for some $c_3,c_4>0$ large and independent of $x$, such that for all $x$ large enough,
\begin{align*}&\hspace{0.5cm}\p\left(\bigcup_{v\in V_{n,j}}\bigcup_{T_{v,n}(x)\leq k\leq n}\{\bet_{v,n}(k)\in B_x,~\bet_{v,n}(T_{v,n}(x))\in [x,\infty)\times B_\z(c_3j)^c\}\right)\\
&\leq \p\left(\max_{v\in V_j}\n{\bet_{v,j}(j)}\geq (c_3-1)j\right)\\
&\leq d\,\p\left(\max_{v\in V_j}|\eta^{(1)}_{v,j}(j)|\geq \frac{(c_3-1)j}{\sqrt{d}}\right)\\
&\leq e^{-j/c_4}.
\end{align*}
Inserting the above estimates to \eqref{eq:1} leads to
\begin{align*}
     \p(\tau_x\leq A_2(x)-a(x))&\leq \frac{\ee}{4}+\sum_{j=a(x)}^{\infty}\left(\frac{\ee}{10j^2}+L_3j^2e^{-\bl c_1 j}(c_3j)^{d-1}+e^{-j/c_4}\right).
\end{align*}
For any $\ee>0$ and $a(x)\to\infty$, the right-hand side is $<\ee$ for $x$ large enough. This completes the proof of the lower bound \eqref{eq:toprove2}.

\subsection{Tightness of first passage times: proof of Theorem \texorpdfstring{\ref{theorem:concentration}}{}}\label{sec:concentration}

 We build upon ideas from  \citep{mcdiarmid1995minimal}, while in contrast, we do not assume uniform boundedness of the increments. 
  In the following, we write $C_1,C_2,C_3,\dots$ as large constants and $c_3,c_4,\dots$ as small constants that may depend on the law of $\bxi$ and the branching rate.\footnote{Not to be confused with the fixed constants $c_1,c_2$ defined in Section \ref{sec:notations-sphsym}.} Here, we do not assume that $\bxi$ is spherically symmetric.
We start with a few preparatory lemmas. 

 \begin{lemma}\label{lemma:weak uniform LD}
  Assume (A6).   For any $C_1>0$ and $c_3>0$, there exists  $C_2>0$ such that the following holds for $k$ large enough:      let $(X_j,\bY_j)$ be a sequence of i.i.d.~random variables with the same distribution as $\bxi$, $C>C_2$, and $(F_{Ck},\bJ_{Ck})=\sum_{j=1}^{Ck}(X_j,\bY_j)$. Then uniformly for $k$ large enough and $\bz\in\R^d$ such that $\n{\bz}\leq C_1k$,
     $$\p((F_{Ck},\bJ_{Ck})\in B_{\bz})\gg e^{-c_3k}.$$
 \end{lemma}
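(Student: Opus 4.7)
The plan is to apply a standard Cram\'er exponential tilt to shift the mean of the walk onto $\bz$, and then to bound the resulting tilted probability via a local limit theorem. Write $n = Ck$ and $\bar\bz = \bz/n$, so that $\|\bar\bz\| \le C_1/C$ can be made arbitrarily small by choosing $C$ large. By (A2), $\E[\bxi] = \z$; by (A6), $\nabla \log \phi_\bxi$ has an interior point $(\widehat c_1, \z)$ in its range, which forces the covariance $\Sigma = \nabla^2 \log \phi_\bxi(\z)$ to be non-degenerate (otherwise $\mathrm{ran}\,\nabla \log \phi_\bxi$ would be trapped in a proper affine subspace). Hence $\nabla \log \phi_\bxi$ is a smooth local diffeomorphism at $\z$, and for $C \ge C_2$ sufficiently large there exists a unique $\bla = \bla(\bar\bz)$ with $\nabla \log \phi_\bxi(\bla) = \bar\bz$ and $\|\bla\| \ll C_1/C$.

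I would then introduce the tilted law $\mathrm{d}\p_\bla/\mathrm{d}\p|_\bxi = e^{\bla \cdot \bxi - \log \phi_\bxi(\bla)}$ and apply it independently to each $(X_j, \bY_j)$. Combining the resulting change of measure with Legendre duality $I(\bar\bz) = \bla \cdot \bar\bz - \log \phi_\bxi(\bla)$ at the chosen optimizer, together with the crude bound $|\bla \cdot ((X,\bY) - \bz)| \le \|\bla\|$ on $\{(X,\bY) \in B_\bz\}$, yields
\begin{align*}
    \p\big((X,\bY) \in B_\bz\big) \ge e^{-n I(\bar\bz) - \|\bla\|}\, \p_\bla\big((X,\bY) \in B_\bz\big).
\end{align*}
Since $I(\z) = 0$ and $\nabla I(\z) = \z$, a second-order Taylor expansion gives $I(\bar\bz) \ll \|\bar\bz\|^2 \ll C_1^2/C^2$, and hence $n I(\bar\bz) \ll C_1^2 k/C$, which is at most $c_3 k/2$ once $C_2$ is chosen large relative to $C_1$ and $c_3$. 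The factor $\|\bla\|$ is $O(1)$ and absorbed into the constant.

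It then remains to bound $\p_\bla((X,\bY) \in B_\bz)$ from below. Under $\p_\bla$, $(X,\bY)$ is a sum of $n$ i.i.d.\ vectors with mean $\bz$ and covariance $n \Sigma_\bla = n \nabla^2 \log \phi_\bxi(\bla)$, which stays uniformly non-degenerate for $\bla$ in a small fixed neighborhood of $\z$. A multidimensional local limit theorem then yields $\p_\bla((X,\bY) \in B_\bz) \gg n^{-d/2}$ for $n$ large, and since $n^{-d/2} \ge e^{-c_3 k/2}$ for $k$ large, multiplying the two estimates gives $\p((X,\bY) \in B_\bz) \gg e^{-c_3 k}$. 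The main obstacle is applying this local CLT uniformly in $\bla$ under only (A6): one must verify uniform non-degeneracy of the tilted covariance (immediate from continuity) and handle both the lattice and non-lattice cases for $\bxi$ (the estimate survives in either setting via the appropriate local limit theorem, as a unit-radius ball in $\R^d$ comfortably absorbs any bounded lattice spacing that can arise from a finite-variance walk).
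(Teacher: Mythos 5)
Your proposal follows essentially the same strategy as the paper's proof: exponentially tilt the law of $\bxi$ so that the mean of each increment becomes $\bz/(Ck)$, use the Fenchel--Legendre identity to rewrite the Radon--Nikodym factor as $e^{-nI(\bar\bz)}$ up to an $O(\|\bla\|)$ correction, invoke a local CLT to get $\p_\bla\big((X,\bY)\in B_\bz\big)\gg n^{-d/2}$, and absorb the combined factor into $e^{-c_3 k}$ by using $I(\z)=0$, $\nabla I(\z)=\z$, and the smallness of $\bar\bz=\bz/(Ck)$. The paper uses a first-order Taylor (mean-value) bound together with continuity of $\nabla I$, whereas you use a second-order expansion; both are valid since $\Lambda$ and hence $I$ are smooth in a neighborhood of $\z$.

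One concrete gap in your writeup: the parenthetical claim that "a unit-radius ball in $\R^d$ comfortably absorbs any bounded lattice spacing that can arise from a finite-variance walk" is false. Lattice spacing and variance are independent quantities — e.g.\ take $\bxi$ supported on $K\bZ^d$ with $\p(\bxi=\z)$ close to $1$ and tiny mass far out so that $\var(\bxi)=1$ but the spacing $K$ is arbitrarily large; then $B_\bz$ typically contains no lattice point and the claimed lower bound fails. In fairness, the paper's own proof also does not address the lattice case carefully — it cites Lemma~\ref{lemma:edgeworthok}, which is stated for spherically symmetric (hence automatically strongly non-lattice when $d\ge 2$) jumps, whereas the lemma itself assumes only (A6). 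In the contexts where this lemma is actually applied (Theorem~\ref{theorem:concentration} under (A6), which is paired elsewhere with (A5) or (A3)), non-latticeness is available; you should invoke it explicitly rather than arguing that lattice walks are automatically covered.
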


 The proof of Lemma \ref{lemma:weak uniform LD} is deferred to the appendix.

\begin{lemma}\label{lemma:expgrowth}
    There exist some $C_3,c_4>0$  such that for $k\geq 1$ large enough,
    $$\p(\#\{v\in V_{C_3k}:\eta^{(1)}_{v,C_3k}(C_3k)\geq k\}<e^{c_4k}\mid S)\ll e^{-c_4k}.$$
\end{lemma}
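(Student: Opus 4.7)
The plan is to combine an exponential-in-time growth estimate for the Galton-Watson tree with Lemma \ref{lemma:weak uniform LD}, applied along a single selected lineage in each sub-BRW, and then concentrate via a Chernoff bound. First, under the finite-variance assumption (A1), standard supercritical Galton-Watson theory yields constants $\alpha\in(0,\log\rho)$ and $c_5>0$ such that
$$
\p(\#V_n<e^{\alpha n}\mid S)\ll e^{-c_5 n},\qquad n\geq 1.
$$
Apply this at an intermediate time $t_1:=Ak$ (with $A>0$ a constant to be fixed) to get $\#V_{t_1}\geq e^{\alpha Ak}$ on $S$ up to probability $\ll e^{-c_5 Ak}$. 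Standard BRW maximum estimates (Biggins) under (A6) further give $\max_{v\in V_{t_1}}\|\bet_{v,t_1}(t_1)\|\leq Rk$ for a large enough $R>0$, up to probability $\ll e^{-c_6 k}$.

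Next, to each $w\in V_{t_1}$ I would associate a ``spine'' lineage, obtained by picking a uniformly random child at each subsequent generation (with failure declared if the chosen lineage ever has zero offspring). Conditional on the spine surviving all $(C_3-A)k$ remaining steps, which happens with probability $(1-q_0)^{(C_3-A)k}$ where $q_0:=\p(\mathrm{offspring}=0)<1$, its endpoint is $\bet_{w,t_1}(t_1)$ plus a sum of $(C_3-A)k$ i.i.d.\ copies of $\bxi$. The target $(k+1,\z)$ lies within Euclidean distance $(R+2)k$ of $\bet_{w,t_1}(t_1)$ on the containment event, so Lemma \ref{lemma:weak uniform LD} applied with $C_1:=R+2$ and any fixed small $c_3>0$ provides a threshold $C_2=C_2(C_1,c_3)$; choosing $C_3:=A+C_2$ makes the lineage length $C_2 k$ satisfy the lemma's hypothesis, and its endpoint lies in $B_{(k+1,\z)}$ with probability $\gg e^{-c_3 k}$. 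Combined with spine survival, each $w$-rooted sub-BRW contributes at least one descendant in $V_{C_3 k}$ with first coordinate $\geq k$ with probability at least $p_0\gg e^{-C_\star k}$, where $C_\star:=c_3+C_2|\log(1-q_0)|$. These events are independent across distinct $w\in V_{t_1}$ and produce distinct descendants since the sub-trees are disjoint.

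Parameters are fixed in the following order: $\alpha,c_5,q_0$ from the Galton-Watson law; a small $c_3>0$; the resulting $C_2$ from Lemma \ref{lemma:weak uniform LD}; then $C_3:=A+C_2$; and finally $A$ large enough that $\alpha A > 2C_\star$ (possible because $C_\star$ does not depend on $A$). Set $c_4:=(\alpha A-C_\star)/4$. On the joint good event from Step~1, the number of successful sub-BRWs stochastically dominates a $\mathrm{Binomial}(e^{\alpha Ak},p_0)$ variable with mean at least $e^{(\alpha A-C_\star)k}$; a Chernoff bound then pushes this count above $e^{c_4 k}$ outside a set of super-exponentially small probability in $k$. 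Summing the three failure probabilities (growth, containment, Chernoff) gives a total of $\ll e^{-c_4 k}$ after possibly shrinking $c_4$.

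The principal obstacle is Step~1: the exponential-in-time lower bound on $\#V_n$ on survival under only the finite-variance hypothesis (A1). Rather than reprove it, I would invoke classical Galton-Watson results---for example, via the Harris decomposition of a supercritical GW tree on survival into its infinite backbone plus finite bushes, combined with a Chernoff bound on the (supercritical) backbone. A secondary technical point is that the uniformity in the target point $\bz$ in Lemma \ref{lemma:weak uniform LD} is exactly what allows that lemma to be applied across all containment-good $w\in V_{t_1}$ simultaneously.
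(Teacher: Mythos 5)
The proposal has a genuine gap rooted in the use of a pre-specified spine and the resulting circularity in the parameter choice.

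You pay the spine-survival cost $(1-q_0)^{(C_3-A)k}$, which is exponentially small in $k$ (unless $q_0=0$), and attempt to offset it by taking $A$ large so that $\alpha A > 2C_\star$ with $C_\star=c_3+C_2|\log(1-q_0)|$. You assert $C_\star$ does not depend on $A$, but it does: the containment constant $R$ in ``$\max_{v\in V_{Ak}}\|\bet_{v,Ak}(Ak)\|\leq Rk$'' must be taken $\Theta(A)$ (the BRW maximum grows linearly in time, so at time $Ak$ one needs $R\gtrsim \bar c_1 A$), whence $C_1=R+2=\Theta(A)$, and the proof of Lemma~\ref{lemma:weak uniform LD} forces $C_2\gtrsim C_1^2/c_3=\Theta(A^2)$. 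Hence $C_\star=\Theta(A^2)$, and ``$\alpha A>2C_\star$'' fails for all large $A$. Taking $A$ fixed does not help either: there is no reason for $\alpha$ to exceed $c_3+C_2|\log(1-q_0)|$, since $C_2$ is bounded below by some positive constant and $|\log(1-q_0)|$ may be large.

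The paper avoids this entirely by not demanding a pre-specified lineage survive. Instead, conditionally on the sub-BRW rooted at $v_j$ having at least one descendant at time $(C_2+1)k$---an event with probability bounded below by the survival probability $p>0$, uniformly in $k$---one selects a surviving lineage as a measurable function of the genealogical tree alone. Because the tree and the displacements are independent, the displacements along any tree-measurable lineage are still i.i.d.~copies of $\bxi$, so Lemma~\ref{lemma:weak uniform LD} gives a hitting probability $\gg e^{-c_3 k}$ at cost only $O(1)$ rather than $(1-q_0)^{C_2k}$. With this device there is no exponential spine cost to beat, $A$ can be fixed at $1$ (the paper uses $n=k$ directly), the containment constant is fixed, and the circularity disappears: the paper first establishes (with a split into an initial segment of length $c_8 n$ plus a diffusive-scaling containment) that $\#\{v\in V_k:\|\bet_{v,k}(k)\|\leq k\}\geq e^{c_6k}$ on $S$ up to exponentially small probability, then applies Lemma~\ref{lemma:weak uniform LD} with $C_1=3$ (which has no hidden dependence on any growth parameter) and $c_3=c_6/2$ to conclude.

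A secondary remark: even if you correct the issue above, your containment step as written bounds the \emph{maximum} displacement at time $Ak$, which is of order $Ak$, not $k$. The useful statement is that an exponential number of particles lie within distance $O(k)$ of the origin at that time. The paper gets exactly this from its claim \eqref{eq:c5c6}; your Step~1 as stated does not.
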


\begin{proof}
We first claim that for any $c_5>0$, there exist $c_6,c_7>0$ such that uniformly for $n$ large enough, 
\begin{align}
    \p(\#\{v\in V_n:\n{\bet_{v,n}(n)}\leq c_5n\}< e^{c_6n}\mid S)\ll e^{-c_7n}.\label{eq:c5c6}
\end{align}
Indeed, given $c_5$, let $c_8=c_5/(6d\bar{c}_1)$ where here $\bar{c}_1$ stands for the maximum linear speed of the BRW across all $d$ directions, so that by Theorem 3.2 of \citep{gantert2018large},\footnote{Theorem 3.2 of \citep{gantert2018large} is stated in the Schr\"{o}der case, where $p_0+p_1>0$. However, the proof for the upper bound of the large deviation probability does not rely on this assumption; see Remark 3.3(b) therein.}
\begin{align}
    \p\left(\exists v\in V_{c_8n},~\n{\bet_{v,c_8n}(c_8n)}> \frac{c_5n}{2}\right)\ll e^{-c_7n}.\label{eq:ld}
\end{align}
Conditioning upon the survival event $S$, with exponentially small probability we have that $\#V_{c_8n}\leq e^{c_9n}$. Evidently, each $v\in V_{c_8n}$ will lead independently with probability $>1/2$ to a particle $w\in V_n$ where $\n{\bet_{w,n}(n)-\bet_{w,n}(c_8n)}\leq c_5n/2$.
This along with standard binomial estimates proves \eqref{eq:c5c6}.

We return to the proof.  We apply first \eqref{eq:c5c6} with $n=k$ and $c_5=1$. Outside a set of exponentially small probability, we may consider a subcollection of $e^{c_6k}$ particles in $\{v\in V_k:\n{\bet_{v,k}(k)}\leq k\}$, which we label by $v_j,~1\leq j\leq e^{c_6k}$. We next evolve the particles $v_j$ independently and check if they reach $k$ in the $x$-direction. It suffices to consider the presence of the particles evolved from $v_j,~1\leq j\leq e^{c_6k}$ in a ball of radius one and at most $k+k+1\leq 3k$ apart (from the locations of $v_j$). By Lemma \ref{lemma:weak uniform LD} applied with $C_1=3$ and $c_3=c_6/2$, for each $j$ we have for some $C_2>0$, with $w_j$ denoting the location of (a fixed descendant of) $v_j$ after time $C_2k$, that 
\begin{align}
    \p(w_j\in [k,\infty)\times\R^{d-1})\geq \p(w_j\in B_{(k+1,\z)})\gg e^{-c_6k/2}.\label{eq:vjafterk}
\end{align} Note that such a fixed path exists for at least half of $v_j$'s with overwhelming probability.
In conclusion, we evolved at least $e^{c_6k}/2$ many particles independently at time $k$ for time $C_2k$, each resulting in a probability of $\gg e^{-c_6k/2}$ to land in $[k,\infty)\times\R^{d-1}$. A standard binomial estimate yields that with overwhelming probability, there exist $e^{c_6k/4}$ particles at time $(C_2+1)k$ present in $[k,\infty)\times\R^{d-1}$ (conditional on survival). This completes the proof.
\end{proof}

\begin{lemma}\label{lemma:LD for taux}
    There exist constants $c_{10},C_4>0$ such that for $x$ large enough,
    $$\p(\tau_{x}\geq C_4x\mid S)\ll e^{-c_{10}x}.$$
\end{lemma}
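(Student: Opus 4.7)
The plan is a two-phase argument that closely parallels the proof of Lemma~\ref{lemma:expgrowth}. First, I let the BRW grow for a small fraction of the total time budget so as to produce exponentially many particles still near the origin; then I evolve each of their sub-trees independently to try to reach $B_x$.

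For Phase~1, I fix a small $\delta_0>0$ to be chosen at the end and apply the estimate \eqref{eq:c5c6} established in the proof of Lemma~\ref{lemma:expgrowth} with $n=\lfloor\delta_0 x\rfloor$. This gives, outside a set of probability $\ll e^{-c_7\delta_0 x}$ and conditional on $S$, at least $N=e^{c_6\delta_0 x}$ particles $v_1,\dots,v_N$ at generation $\lfloor\delta_0 x\rfloor$ whose positions $\bu_j=\bet_{v_j,\lfloor\delta_0 x\rfloor}(\lfloor\delta_0 x\rfloor)$ all lie in $B_\z(c_5\delta_0 x)$.

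For Phase~2, I would use that the sub-BRWs rooted at $v_1,\dots,v_N$ are conditionally independent given $(\bu_j)$. For each $j$, the target $(x,\z)$ is within Euclidean distance $(1+c_5\delta_0)x$ of $\bu_j$, so Lemma~\ref{lemma:weak uniform LD} applied with $k=x$, $C_1=1+c_5\delta_0$, and a small free parameter $c_3>0$ yields some $C'>0$ such that for the i.i.d.\ sum $(X,\bY)=\sum_{i=1}^{C'x}\bxi_i$,
\[
\p\bigl((X,\bY)\in B_{(x,\z)-\bu_j}\bigr)\gg e^{-c_3 x}.
\]
Since a Galton--Watson subtree reaches generation $C'x$ with probability at least $p/2$ for $x$ large, I would then combine this with the independence between tree structure and jump labels in the BRW to obtain $\p(E_j\mid \bu_j)\gg e^{-c_3 x}$, where $E_j$ is the event that at least one descendant of $v_j$ at generation $C'x$ lands in $B_x$.

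To finish, I would choose $c_3<c_6\delta_0$. Conditional independence of the $E_j$'s and the elementary bound $(1-q)^N\leq e^{-Nq}$ give
\[
\p\Bigl(\bigcap_{j=1}^N E_j^c\,\Big|\,v_1,\dots,v_N\Bigr)\leq \exp\bigl(-c\,e^{(c_6\delta_0-c_3)x}\bigr),
\]
which is doubly exponentially small in $x$. Setting $C_4=\delta_0+C'$ and absorbing the Phase~1 failure probability $\ll e^{-c_7\delta_0 x}$ produces the desired estimate with, e.g., $c_{10}=c_7\delta_0/2$. The hardest step will be the lower bound $\p(E_j)\gg e^{-c_3 x}$: I need to cleanly decouple the survival of a canonical lineage of $v_j$'s Galton--Watson subtree (uniformly lower-bounded via the non-extinction probability $p$) from the law of the displacement along that lineage (which must remain an i.i.d.\ sum of $\bxi$'s so that Lemma~\ref{lemma:weak uniform LD} applies directly). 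This is exactly the bookkeeping device used in the proof of Lemma~\ref{lemma:expgrowth} and relies on the fact that the genealogical tree and the jump labels are independent in the BRW construction; beyond this, no new ingredient besides Lemma~\ref{lemma:weak uniform LD} and \eqref{eq:c5c6} should be needed.
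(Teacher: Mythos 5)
Your proposal is correct and follows essentially the same route as the paper. The paper's own proof of this lemma is only one line — it points back to the argument for Lemma~\ref{lemma:expgrowth} ``by replacing $k$ by $x$ and using the second inequality of \eqref{eq:vjafterk} only'' — and your write-up is precisely a fleshed-out version of that. The only cosmetic differences are that the paper effectively takes $\delta_0 = 1$ (it applies \eqref{eq:c5c6} with $n = x$ and $c_5 = 1$, so Phase~1 particles lie in $B_\z(x)$ and Lemma~\ref{lemma:weak uniform LD} is invoked with $C_1 = 3$), whereas you introduce a small $\delta_0$ to keep the Phase~1 cloud closer to the origin; either choice works, and your $\delta_0$ is not actually needed to be small. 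The decoupling step you flag as hardest (fixing a canonical descendant of $v_j$ via the tree topology alone so that its displacement remains an i.i.d.\ sum of $\bxi$'s, and lower-bounding survival by $p$) is indeed the right bookkeeping, and it is the same one implicitly used in Lemma~\ref{lemma:expgrowth}. The handling of the conditioning on $S$ (pass through $\p(B\mid S)\le\p(B)/p$ for the Phase~2 failure event, and use \eqref{eq:c5c6} directly for the conditional Phase~1 failure) is glossed over in both your write-up and the paper, but it is routine.
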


\begin{proof}
The proof is almost the same as Lemma \ref{lemma:expgrowth}, by replacing $k$ by $x$ and using the second inequality of \eqref{eq:vjafterk} only. 
\end{proof}

In the proof of Theorem \ref{theorem:concentration},  we wish to use Lemma \ref{lemma:LD for taux} to bound probabilities of the form $\p(\tau_x\geq \alpha,~\tau_y\leq\beta)$, where $\beta<\alpha$. This is intuitively possible since we may run the particle $v_y$ that reaches $B_y$ and bound the probability that it never reaches $B_x$ in a time longer than $\alpha-\beta$, where we have used the strong Markov property of the random walk.  There is, however, a subtle issue for the survival of the branching process initiated by the particle $v_y$, as this event is almost independent of the survival event $S$ we are conditioning on (in the proof of Theorem \ref{theorem:concentration}). We will bypass this difficulty by excluding this termination event a priori, or by first having an exponential number $e^{cn}$ of particles in $B_y$ instead of a single particle $v_y$. 

Recall our notations beginning Section  \ref{app:proof_thm1} and denote by $q=1-p=1-\p(S)$. 

\begin{proof}[Proof of Theorem \ref{theorem:concentration}]  Pick $C_3$ and $c_4$ as in Lemma \ref{lemma:expgrowth} and let $\delta\in(e^{-c_{13}/(2C_3)},1)$ for some $c_{13}>0$ to be determined.  For $x,y>0$, let $$s=s(x,y):=\inf\{t\geq 0:\p(\tau_x\leq t\mid S)\geq\delta^y\}.$$
It follows that $\p(\tau_x<s-1\mid S)\leq\delta^y$ and
\begin{align}
    \p(\tau_x>s+1)\leq 1-p\delta^y\leq \exp(-p\delta^y).\label{eq:ub at s}
\end{align}
 Let $k=y/C_3$ and set $C_5\gg 1+C_3+C_6C_7$, all to be determined. 
Denote by $\tau^{(c_{11})}_{x+k}$ the waiting time until $e^{c_{11}k}$ many particles reach the ball $B_{x+k}$. 
Now note that
\begin{equation}\label{eq:stepone}\begin{split}
    \p(\tau_x>s-1)&\leq \p(s-1<\tau_x<s+C_5k)+\p\left(\tau^{(c_{11})}_{x+k}>s+C_3k+1+C_6C_7k\right)\\
    &\hspace{2.5cm}+\p\left(\tau_x\geq s+C_5k,~\tau^{(c_{11})}_{x+k}\leq s+C_3k+1+C_6C_7k\right).
\end{split}
\end{equation}
We first decompose the second event $\tau^{(c_{11})}_{x+k}>s+C_3k+1+C_6C_7k$ of \eqref{eq:stepone} depending on the number of particles at time $C_3k$ that lie beyond $k$ in the $x$-coordinate. On one hand, by Lemma \ref{lemma:expgrowth},
$$\p(\#\{v\in V_{C_3k}:\eta^{(1)}_{v,C_3k}(C_3k)\geq k\}<e^{c_4k})\leq q+o(e^{-c_4k}).$$
On the other hand, on the event that $\#\{v\in V_{C_3k}:\eta^{(1)}_{v,C_3k}(C_3k)\geq k\}\geq e^{c_4k}$, we may label $e^{c_4k}$ many particles by $\{v_i\}_{1\leq i\leq e^{c_4k}}$. Our goal is to show that the probability that fewer than $e^{c_{11}k}$ of $\{v_i\}_{1\leq i\leq e^{c_4k}}$ reach $B_x$ in the period of time $[C_3k,C_3k+s+1+C_6C_7k]$ is small. Note that the trajectories for each $i$ in the period $[C_3k,C_3k+s+1+C_6C_7k]$ are independent and equivalent in law to a BRW trajectory initiated at $\bet_{v,C_3k}(C_3k)$. A technical difficulty arises, however, as the events of reaching $B_x$ are not monotone in $x$ in dimensions $d\geq 2$. It would be nice if we had stochastic monotonicity for $\tau_x$ in $x$ (as in the one-dimensional case), but this appears out of our reach. Instead, we use the following argument to bypass this technical difficulty. 

Recall that each $\eta^{(1)}_{v_i,C_3k}(C_3k)\geq k$, and the notation $\bx=(x,0\dots,0)\in\R^d$. By a standard large deviation estimate on the last $d-1$ coordinates of $\bet_{v_i,C_3k}(C_3k)$ and using the triangle inequality, we may assume that $\n{\bx-\bet_{v_i,C_3k}(C_3k)}\leq x+C_6k$ for all $1\leq i\leq e^{c_4k}$ by losing an event of probability $O(e^{-c_{12}k})$ with $c_4$ chosen small enough.
Similarly, we may assume $\eta^{(1)}_{v_i,C_3k}(C_3k)\leq C_8k$ with an overwhelming probability $1-O(e^{-c_{12}k})$ (since there are $e^{c_4k}$ many particles in total and we can apply a union bound). As a consequence,
$$\p\left(\forall 1\leq i\leq e^{c_4k},~\max\{0,x-C_8k\}\leq \n{\bx-\bet_{v_i,C_3k}(C_3k)}\leq x+C_6k\right)\geq 1-O(e^{-c_{12}k}).$$
For each $1\leq i\leq e^{c_4k}$, consider the set of $\bx_i$'s such that $\n{\bx_i-\bet_{v_i,C_3k}(C_3k)}=x$. Selecting $C_8>C_6$ and on a set of overwhelming probability of at least $1-O(e^{-c_{12}k})$, we can select $\bx_i$ such that $\n{\bx-\bx_i}\leq C_8k$.
For $\bz\in\R^d$, we denote by $\tau_{\bz}$ the FPT to the ball of radius one centered at $\bz$. By Lemma \ref{lemma:LD for taux} applied with $C_4=C_7/2$ and picking $C_7$ large (and without loss of generality, a $p/2$ proportion of the $e^{c_4k}$ many particles do not terminate, by losing an exponentially small probability),
\begin{align*}
    &\hspace{0.5cm}\p\left(\#\{ 1\leq i\leq \frac{p}{2}e^{c_4k},~\tau_{\bx-\bet_{v_i,C_3k}(C_3k)}\geq s+1+C_6C_7k\}\geq e^{c_{11}k}\right)\\
     &\leq \p\left(\#\{ 1\leq i\leq \frac{p}{2}e^{c_4k},~\tau_{\bx_i-\bet_{v_i,C_3k}(C_3k)}\geq s+1\}\geq e^{c_{11}k}\right)+O(e^{-c_{12}k})\\
     &\hspace{1cm}+\p\left(\exists 1\leq i\leq \frac{p}{2}e^{c_4k},~\tau_{\bx-\bet_{v_i,C_3k}(C_3k)}\geq s+1+C_6C_7k,~\tau_{\bx_i-\bet_{v_i,C_3k}(C_3k)}< s+1\right)\\
    &\leq \p\left(\#\{1\leq i\leq \frac{p}{2}e^{c_4k},~\tau_{\bx_i-\bet_{v_i,C_3k}(C_3k)}\geq s+1\}\geq e^{c_{11}k}\right)\\
    &\hspace{4cm}+\sum_{ 1\leq i\leq e^{c_4k}}\p\left(\tau^{(i)}_{C_6k+1}>C_6C_7k\right)+O(e^{-c_{12}k})\\
    &\leq \p\left(\mathrm{Bin}(\frac{p}{2}e^{c_4k},\p(\tau_x>s+1))\geq e^{c_{11}k}\right)+e^{c_4k}e^{-c_{10}k} +O(e^{-c_{12}k}).\nonumber
\end{align*}
Here and later, we use $\mathrm{Bin}(n,p)$ to denote a generic binomial random variable with $n$ trials and success probability $p$. 
By choosing $c_4$ and then $c_{11}$ small enough and using \eqref{eq:ub at s}, we obtain that the above satisfies 
\begin{align*}
&\p\left(\mathrm{Bin}\left(\frac{p}{2}e^{c_4k},\p(\tau_x>s+1)\right)\geq e^{c_{11}k}\right)+e^{(c_4-c_{10})k} +O(e^{-c_{12}k})\\
&\hspace{7cm}\leq \exp(-p\delta^ye^{c_{13}k})+O(e^{-c_{14}k}).\nonumber
\end{align*}
Inserting back to \eqref{eq:stepone} and using Lemma \ref{lemma:LD for taux} on the third term on the right-hand side of \eqref{eq:stepone} (with our choice of $C_5\gg 1+C_3+C_6C_7$), we have 
\begin{align}
    \p(\tau_x>s-1)\leq q+\exp(-p\delta^ye^{c_{13}k})+O(e^{-c_{14}k})+\p(s-1<\tau_x\leq s+C_5k). \label{3}
\end{align}
Since $\delta\in(e^{-c_{13}/(2C_3)},1)$ and $k=y/C_3$, for $y$ large enough we have $\exp(-p\delta^ye^{c_{13}k})\leq \delta^y$. Thus, we conclude from \eqref{3} that
\begin{align}
    \p(\tau_x>s+C_5k)\leq q+O( e^{-c_{15}y}).\label{eq:tau_x recursion}
\end{align}

For $q>0$, using the fact that  $\p(\#V_n>0\mid S^c)=o(e^{-c_{16}n})$ (this is a consequence of Theorem 13.3 of \citep{athreya2004branching}), we obtain 
\begin{equation}\begin{split}
    \p(\tau_x>s+C_5k)&\geq p\,\p(\tau_x>s+C_5k\mid S)+q\,\p(\tau_x>y,~\#V_y=0\mid S^c)\\
    &\geq p\,\p(\tau_x>s+C_5k\mid S)+q(1-o(e^{-c_{16}y}))-C_9\p(\tau_x\leq y,~S^c).
\end{split}
    \label{eq:steptwo}
\end{equation}
Again by using $\p(\#V_n>0\mid S^c)=o(e^{-c_{16}n})$, we have for some $c_{17}$ to be determined later,
$$\p(\tau_x\leq y,~S^c)\leq o(e^{-c_{17}c_{16}y})+\p(\tau_x\leq c_{17}y).$$
Recall that $M_n$ denotes the maximum in the first direction of the BRW at time step $n$. 
Using Lemma 5.2 of \citep{gantert2018large}, it holds that for $\widetilde{M}_j$ denoting the maxima of $\#V_{j}$ many independent random walks,
$$\p(\tau_x\leq c_{17}y)\leq \sum_{j=1}^{c_{17}y}\p(M_j\geq x)\leq \sum_{j=1}^{c_{17}y}\p(\widetilde{M}_j\geq x).$$
It is straightforward to show using a Chebyshev argument that uniformly for $y\leq x$, with $c_{17}$ small enough, the right-hand side is bounded by $e^{-c_{18}y}$. Inserting into \eqref{eq:steptwo} yields
$$\p(\tau_x>s+C_5k)\geq p\,\p(\tau_x>s+C_5k\mid S)+q-C_{10}e^{-c_{19}y}.$$
 Using \eqref{eq:tau_x recursion}, it then holds that $$\p(\tau_x>s+C_5k\mid S)\ll e^{-c_{19}y}.$$ We conclude that for $y\leq x$ large enough,
$$\p(s-1\leq \tau_x\leq s+C_5k\mid S)\geq 1-\delta^y-O(e^{-c_{19}y})\geq 1-O(e^{-c_{20}y}).$$
Therefore, for some $y_0>0$ large enough,
$\p(s-1\leq \tau_x\leq s+C_5y_0/C_3\mid S)>4/5$, so that $s-1\leq t^{(1/2)}_x\leq s+C_5y_0/C_3$. For $y>2C_5y_0/C_3$, we then have
\begin{align*}
    \p\left(|\tau_x-t^{(1/2)}_x|>y\mid S\right)&\leq 1- \p\left(s-1\leq\tau_x\leq s+\frac{y}{2}+\frac{C_5y_0}{C_3}\mid S\right)\leq O(e^{-c_{20}C_3y/(2C_5)}).
\end{align*}
For the case $y\leq 2C_5y_0/C_3$, we may increase the constant $C$ in \eqref{o} such that for $y$ in this range, $Ce^{-cy}\geq 1$ holds, and hence \eqref{o} trivially holds. 
This establishes \eqref{o}. 
\end{proof} 

We remark that the above proof works for a general $r\in(0,1)$ (where $r$ is the quantile in $t_x^{(r)}$), in addition to the case $r=1/2$. 



\subsection{Proof of Theorem \texorpdfstring{\ref{thm:main long}}{}}\label{sec:proof long}

First, let us recall our notations. Let $\Lambda(\bla)=\log\phi_\bxi(\bla)=\log\E[e^{\bla\cdot\bxi}]$ and $I(\bx)$ be its Fenchel--Legendre transform, which coincides with the large deviation rate function for the jump distribution $\bxi$. The unique positive solution to $I(\widehat{c}_1,\z)=\log\rho$ is given by $\widehat{c}_1$. Our goal is to prove that $\tau_x=\widehat{A}(x)+o_\p(\log x)$, where
$$\widehat{A}(x):=\frac{x}{\widehat{c}_1}+\left(\frac{d+2}{2\widehat{c}_1\partial_{x_1} I(\widehat{c}_1,\z)}\right)\log x.$$

\begin{figure}
    \centering
    \includegraphics[width=0.7\textwidth]{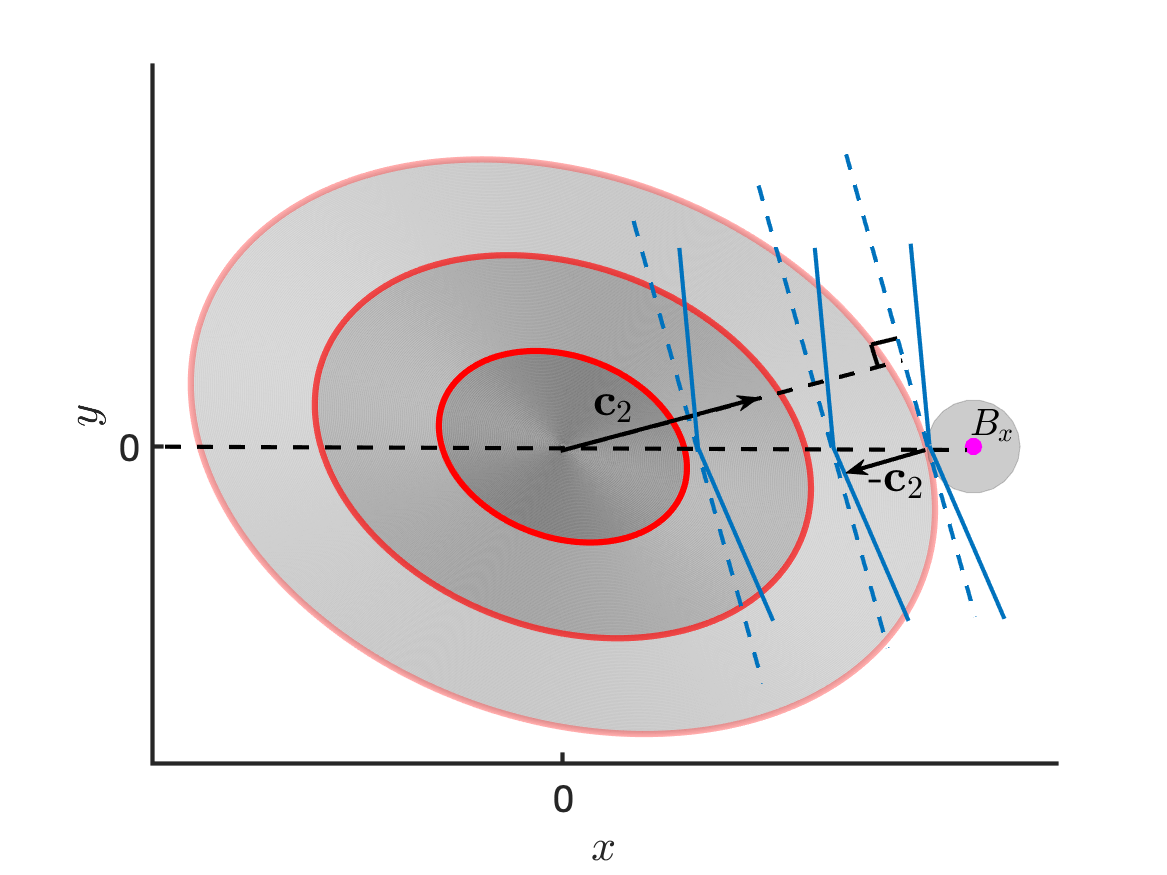}
    \caption{Visualization of how random walks in cones are related to estimating first passage times. The first passage time $\tau_x$ is roughly the time when the growing range (illustrated with the shaded ellipses) of the BRW becomes tangent with the target ball $B_x$. In our proof, we will construct moving cones (that are \emph{circular} centered in the direction $-\bc_2$ in the sense of \eqref{eq:circcone}; the boundaries are indicated by the blue lines) and show that with high probability, the range of the BRW always lies within the moving cones. }
    \label{fig:nonsphere2}
\end{figure}

\subsubsection{Random walks in cones}
For estimating $\tau_x$, we need a few preliminary results on random walks and Brownian motion in cones. The intuitive reason why random walks in cones come into play is explained in Figure \ref{fig:nonsphere2}. In this section, we try to be self-contained and summarize only the necessary results. We refer to \citep{banuelos1997brownian,burkholder1977exit,deblassie1987exit,denisov2015random} for more background on this topic.

We first fix some notations. Let $\{\bU_i\}_{i\geq 1}$ be i.i.d.~copies of $\bU=(U_1,\dots,U_d)$ in $\R^d$ and $\bS_n=\bU_1+\dots+\bU_n$ denote its partial sum. Consider an open connected subset $\mathfrak{S}\subseteq \R^d$ and let $\K$ be the cone generated by the rays emanating from $\z\in\R^d$ that go through $\mathfrak{S}$. For $\bz\in\K$, let $$\tau^\K_\bz=\min\{n\geq 1:\bz+\bS_n\not\in \K\}$$ be the first passage time of the random walk $\{\bS_n\}_{n\geq 1}$ to $\K^{\mathrm{c}}$, starting from $\bz\in\K$. Similarly, let $\tau_\bz^{\K,\mathrm{BM}}$ be the first passage time of the standard $d$-dimensional Brownian motion to $\K^{\mathrm{c}}$, starting from $\bz\in\K$. We will frequently consider cones that are \emph{circular} (in the sense of \citep{deblassie1987exit}), that is, of the form
\begin{align}
    \K_\alpha(\bv):=\{\bz\in\R^d:0\leq\theta<\alpha\},\label{eq:circcone}
\end{align}
where $\alpha\in(0,\pi),\z\neq\bv\in\R^d$, and $\theta=\theta(\bz,\bv)$ is the angle between nonzero vectors $\bz$ and $\bv$ in $\R^d$. In the case $\bv=(1,\z)$, we may write $\K_\alpha=\K_\alpha((1,\z))$.

The following result combines several results of \citep{denisov2015random}. Recall from assumption (A5) that the law of $\bxi$ is non-lattice.

\begin{lemma}\label{lemma:rw in cones}Fix a circular cone $\mathbb{K}$, i.e., $\K=\K_\alpha$ for some $\alpha\in(0,\pi)$. Suppose that $\bU$ is centered, non-lattice, and $\E[U_iU_j]=\bone_{\{i=j\}}$.
Let $\bx,\by\in\K$. There exist positive functions $V,\widetilde{V}$ satisfying
$\max(V(\bx),\widetilde{V}(\bx))\leq C(1+\n{\bx})^p$ and
    \begin{align}
        \p(\bx+\bS_n\in B_\by,\tau^\K_{\bx}>n)\ll \frac{V(\bx)\widetilde{V}(\by)}{n^{p+d/2}},\label{eq:dewa1}
    \end{align}
    where $p=p(\K)$ is such that $\p(\tau_\bx^{\K,\mathrm{BM}}>t)\asymp t^{-p/2}$. Moreover, for fixed $\by\in\K$,
    \begin{align}
        \p(\bx+\bS_n\in B_\by,\tau^\K_{\bx}>n)\asymp \frac{V(\bx)}{n^{p+d/2}},\label{eq:dewa2}
    \end{align}
    where the asymptotic constant in $\asymp$ may depend on $y$. 
\end{lemma}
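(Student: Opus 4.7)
The plan is to package together two results from \citep{denisov2015random}: a survival-tail estimate and a local central limit theorem for random walks in the cone $\K$. First I would recall from that paper the construction of the positive harmonic function $V$ for the walk killed upon exiting $\K$, defined as a suitable limit $V(\bx) = \lim_{n\to\infty} \E[u(\bx+\bS_n)\,;\, \tau^\K_\bx > n]$ for an appropriate spherical-harmonic-type function $u$ on $\mathfrak{S}$, and the analogous $\widetilde V$ built from the time-reversed increment distribution. Their construction simultaneously establishes both the harmonicity and the polynomial bound $V(\bx) \leq C(1+\n{\bx})^p$, where $p = p(\K)$ is the spherical Dirichlet exponent determined by $\p(\tau_\bx^{\K,\mathrm{BM}} > t) \asymp t^{-p/2}$.

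For the matching asymptotic \eqref{eq:dewa2} with $\by \in \K$ fixed, I would apply directly the local limit theorem of \citep{denisov2015random}, which yields
\[ n^{p/2 + d/2}\,\p(\bx+\bS_n \in B_\by,\ \tau^\K_\bx > n) \;\longrightarrow\; \kappa\,V(\bx)\,\widetilde V(\by) \]
for an explicit positive constant $\kappa$. Since $\by$ is fixed, $\widetilde V(\by)$ is a positive constant, which delivers exactly $\asymp V(\bx)/n^{p+d/2}$.

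For the uniform upper bound \eqref{eq:dewa1}, I would split the trajectory at time $\lfloor n/2\rfloor$ and condition on the intermediate position $\bz = \bx + \bS_{\lfloor n/2\rfloor}$. By the Markov property and time reversal, the event $\{\bx+\bS_n \in B_\by,\, \tau^\K_\bx > n\}$ is controlled by a convolution of two one-sided survival probabilities: one for the forward walk from $\bx$ on $[0,n/2]$, and one for the reversed walk from $\by$ on $[n/2,n]$. For the first factor I would use the tail bound $\p(\tau^\K_\bx > n/2) \ll V(\bx)\,n^{-p/2}$ combined with a Gaussian-scale local CLT to control the intermediate position; the second factor gives $\widetilde V(\by)\,n^{-p/2}$ by the analogous statement for the reversed walk. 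Integrating out the Gaussian density at the midpoint produces an additional $n^{-d/2}$, yielding the desired $V(\bx)\widetilde V(\by)/n^{p+d/2}$.

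The main obstacle is not any fresh analytical estimate but careful notational bookkeeping: verifying that the polynomial-growth exponent appearing in the harmonic-function construction of \citep{denisov2015random} really coincides with the Brownian exit exponent $p(\K)$ as defined here (via the spherical Dirichlet eigenvalue), and that our covariance normalization $\E[U_iU_j] = \bone_{\{i=j\}}$ matches the standing hypothesis of the cited results so that the limiting Brownian motion in $\K$ is indeed standard.
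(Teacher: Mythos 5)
Your proposal matches the paper's approach: the paper proves \eqref{eq:dewa2} by citing (1), Theorem~6, and Lemma~13(b) of Denisov--Wachtel, and \eqref{eq:dewa1} by citing the derivation of their Lemmas~27--28 together with Theorem~1, which is exactly the split-at-$n/2$/time-reversal/convolution argument you reconstruct. The one point the paper flags that you do not is that the cited results are stated for lattice walks, so one must check that the same derivation carries over to the non-lattice case (which it does, as the paper remarks).
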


\begin{proof}
The asymptotic \eqref{eq:dewa2} is a direct consequence of (1), (10) of Theorem 6 (which holds also for a fixed $\by\in\K$), and Lemma 13(b) of \citep{denisov2015random}. On the other hand, \eqref{eq:dewa1} follows from the derivation of Lemmas 27 and 28 therein, along with their Theorem 1.    The results in \citep{denisov2015random} are stated in the lattice case but non-lattice walks can be proved in the same way.
\end{proof}

\begin{remark}\label{remark:transform}
    As commented in \citep{denisov2015random}, if $\bU$ does not satisfy $\E[U_iU_j]=\bone_{\{i=j\}}$ but has a positive definite covariance matrix, then the same results hold with $\K$ replaced by $M\K=\{M\bx:\bx\in\K\}$, where $\bV=M\bU$ satisfies $\E[V_iV_j]=\bone_{\{i=j\}}$.
\end{remark}

The exponent $p(\K)$ is in general not explicit and is related to the smallest eigenvalue of the Laplace-Beltrami operator; see \citep{denisov2015random}. However, it suffices for our purpose to understand the behavior of $p(\K)$ when $\K$ is a {circular cone}  that is close to a half-space. The following result shows that $p\approx 1$ if the circular cone $\K$ is close enough to a half-space.

\begin{lemma}[(3.11) of \citep{burkholder1977exit}]\label{lemma:angle choice}
 Let $\alpha\in(0,\pi)$ and $\bz\in\K_\alpha$ be arbitrary. There exists a decreasing and continuous function $p_{\alpha,d}$ such that $p_{\pi/2,d}=1$ and\footnote{The formal definition is given in \citep{burkholder1977exit} by $p_{\alpha,d}=\sup\{p:\alpha<\theta_{p,d}\}$, where $\theta_{p,d}$ is the smallest zero in $(0,\pi)$ of $h(\theta)=F(-p,p+d-2;(d-1)/2;(1-\cos\theta)/2)$. The continuity is not explicitly stated in \citep{burkholder1977exit}, but follows from the continuity of the hypergeometric function.}
 $$\E[(\tau^{\K_\alpha}_\bz)^{q/2}]<\infty\quad\Longleftrightarrow \quad q<p_{\alpha,d}.$$
\end{lemma}

The following ballot-type lemma provides bounds when the cone is shifted by a function of time. We do not attempt the sharpest bound here, but state the minimal result needed for our purpose.

\begin{lemma}\label{lemma:newballot}Assume the same setting on the law of $\bU$ as in Lemma \ref{lemma:rw in cones}. 
    Let $\bv\in \R^d,~\bv\neq\z$, and $\alpha\in(0,\pi)$. 
    Let $p=p(\K_\alpha(-\bv))$ be the exponent defined in Lemma \ref{lemma:rw in cones}, $p'<p$, and $C>0$. Then uniformly in $\bz,\by\in\R^d$ such that $\bz-\by\in\K_\alpha(-\bv)$ and $n\in\N$,
    \begin{align}
    \begin{split}
        \label{eq:ballot1}
        &\p(\bS_n\in B_\bz;\,\forall 1\leq k\leq n,\,\bS_k-\by-C\log(\min(k,n-k))_+\bv\in\K_\alpha(-\bv))\\
        &\hspace{6cm}\ll \frac{(1+\n{\bz})^p(1+\n{\by})^{p}}{n^{p'+d/2}},
    \end{split}
    \end{align}
    where the asymptotic constant in $\ll$ depends on $d,p'$, and the law of $\bxi$. 
    Moreover, uniformly for $n\in\N$, $1\leq\ell\leq n$, and $\bz,\by\in\R^d$ with $\bz- \by\in C\log(\min(\ell,n-\ell))_+\bv+\K_\alpha(-\bv)$, \begin{align}
\begin{split}
    \label{eq:ballot2}
        &\p(\bS_\ell\in B_\bz;\,\forall 1\leq k\leq \ell,\,\bS_k-\by-C\log(\min(k,n-k))_+\bv\in\K_\alpha(-\bv))\\
        &\hspace{6cm}\ll \frac{(1+\n{\bz})^p(1+\n{\by})^{p}}{n^{p'+d/2}},
\end{split}
    \end{align}
    where the asymptotic constant in $\ll$ depends on $d,p'$, and the law of $\bxi$. 
\end{lemma}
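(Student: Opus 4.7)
The plan is to prove Lemma \ref{lemma:newballot} by extending the argument of Lemma 2.3 in \citep{bramson2016convergence} to the multidimensional cone setting, with Lemma \ref{lemma:rw in cones} playing the role of the one-dimensional ballot estimate. The key geometric observation is that $\by + s\bv + \K_\alpha(-\bv) \supseteq \by + \K_\alpha(-\bv)$ for all $s \geq 0$: since $-\bv \in \K_\alpha(-\bv)$ and $\K_\alpha(-\bv)$ is a convex cone, any $\bw-\by \in \K_\alpha(-\bv)$ satisfies $(\bw-\by)+(-s\bv)\in \K_\alpha(-\bv)$, i.e., $\bw-\by-s\bv\in \K_\alpha(-\bv)$. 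Consequently, on $[1,n/2]$ the curved-barrier event is contained in the flat-barrier event that $\bS_k$ stays in $\by + C\log(n/2)\bv + \K_\alpha(-\bv)$, and symmetrically on $[n/2,n]$.

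After reducing to the identity-covariance case via Remark \ref{remark:transform}, I would decompose the event at the midpoint by conditioning on $\bS_{n/2}=\bw$, then bound the probability on each half by the flat-barrier event in the enlarged cone. Applying \eqref{eq:dewa1} of Lemma \ref{lemma:rw in cones} with apex $\by+C\log(n/2)\bv$, after first conditioning on $\bS_1$ to meet the in-cone hypothesis, yields
\begin{align*}
    \p(\bS_{n/2}\in d\bw,\,\text{first-half event}) \ll \frac{(1+\n{\by})^p(\log n)^p(1+\n{\bw-\by-C\log(n/2)\bv})^p}{n^{p+d/2}}\,d\bw,
\end{align*}
using $V(\bx),\widetilde{V}(\bx)\ll(1+\n{\bx})^p$, finite $p$-th moment of $\bS_1$, and the submultiplicative bound $(1+\n{\by}+\log n)^p \ll (1+\n{\by})^p(\log n)^p$. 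A symmetric bound holds on $[n/2,n]$ after time reversal (which flips the cone to $\K_\alpha(\bv)$ but leaves the exit exponent $p$ unchanged), with $\bz$ in place of $\by$. Integrating over $\bw$, which is confined to a window of size $O(\sqrt n)$ up to Gaussian-tail corrections, gives
\begin{align*}
    \p(\bS_n\in B_\bz,\,\text{curved-barrier event on }[1,n]) \ll \frac{(1+\n{\by})^p(1+\n{\bz})^p(\log n)^{2p}}{n^{p+d/2}}.
\end{align*}
Absorbing $(\log n)^{2p}$ into $n^{p-p'}$, valid for any $p'<p$ and $n$ large enough, yields \eqref{eq:ballot1}.

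Statement \eqref{eq:ballot2} follows from the same midpoint decomposition applied on $[1,\ell]$, with the enlargement now at scale $C\log(\min(\ell,n-\ell))\bv$; the denominator $n^{p'+d/2}$ rather than $\ell^{p'+d/2}$ is consistent because $\ell\leq n$ and the enlargement tolerance at the endpoint is driven by $\log(\min(\ell,n-\ell))$ rather than $\log\ell$.

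The main technical obstacle will be the $\bS_1$-conditioning step: Lemma \ref{lemma:rw in cones} requires the random walk to start inside the cone, whereas $\bS_0=\z$ generally lies outside, and one must carefully integrate over $\bS_1$ and absorb its contribution into the harmonic prefactor $V$. A secondary difficulty is the Gaussian integration over the midpoint $\bw$, which relies on CLT-type density estimates inside the cone and must factor cleanly into the claimed $(1+\n{\by})^p(1+\n{\bz})^p$ dependence.
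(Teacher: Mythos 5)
Your starting point — the geometric containment $\K_\alpha(-\bv)+s(-\bv)\subseteq\K_\alpha(-\bv)$ for $s\ge 0$, hence curved barrier $\subseteq$ flat barrier at the weakest level — is exactly the paper's first step. But you then stop short of using it to the fullest: because the statement only asks for $n^{-p'-d/2}$ with $p'<p$ (rather than the sharper $n^{-p-d/2}$), you can replace the curved barrier by the \emph{single} flat barrier at $C\log n$ over the \emph{entire} interval $[1,n]$, apply \eqref{eq:dewa1} of Lemma~\ref{lemma:rw in cones} once with the shifted apex $\tilde\by=-\by-(C\log n)\bv$, obtain $\ll (1+\n{\tilde\by})^p(1+\n{\tilde\bz})^p/n^{p+d/2}$, and then absorb the $(\log n)^{2p}$ coming from $(1+\n{\by}+C\n{\bv}\log n)^p\ll (1+\n{\by})^p(\log n)^p$ into the $n^{p-p'}$ slack. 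No midpoint decomposition is needed. This is the paper's entire proof; \eqref{eq:ballot2} is the same argument on $[1,\ell]$ with the shift $C\log(\min(\ell,n-\ell))_+$.

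Your midpoint split, with time reversal on the second half and a Gaussian integration over $\bS_{n/2}$, is the kind of argument one needs when the logarithmically bent barrier actually matters (as in the sharp Lemma 2.3 of \citep{bramson2016convergence}); here it is an over-engineered route to a weaker target. It also creates real technical debt that the direct route avoids: Lemma~\ref{lemma:rw in cones} furnishes a ball-hitting estimate, not a density, so the ``integration over $\bw$'' requires a local estimate for the killed walk that you have not established and that the paper never needs; and the time-reversed walk has increments $-\bxi$, so the harmonic function $V$ in \eqref{eq:dewa1} changes even though the exponent $p$ does not — this is fine for an upper bound on $V$ but must be said. Neither of these concerns arises in the direct argument. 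Your remark about starting the walk at $\bS_1$ to meet the in-cone hypothesis of Lemma~\ref{lemma:rw in cones} is a legitimate (if minor) point which the paper glosses over; it is handled by conditioning on $\bS_1$ and using $\E_\q[(1+\n{\bS_1})^p]<\infty$, which follows from the exponential moments guaranteed by (A6). In short: your proposal would work once the missing density estimate is supplied, but the paper takes a one-line route that you already had all the ingredients for and did not take.
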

\begin{proof}
Let $\tilde{\by}=-\by-(C\log n)\bv$ and $\tilde{\bz}=-\bz-(C\log n)\bv$. We have by using \eqref{eq:dewa1} of Lemma \ref{lemma:rw in cones},
\begin{align*}
    &\hspace{0.5cm}\p(\bS_n\in B_\bz;\,\forall 1\leq k\leq n,\,\bS_k-\by-C\log(\min(k,n-k))_+\bv\in\K_\alpha(-\bv))\\
        &\leq \p(\bS_n\in B_\bz;\,\forall 1\leq k\leq n,\,\bS_k-\by-(C\log n)\bv\in\K_\alpha(-\bv))\\
        &=\p(\tilde{\by}+\bS_n\in B_{\tilde{\bz}};\,\forall 1\leq k\leq n,\,\tilde{\by}+\bS_k\in\K_\alpha(-\bv))\\
        &\ll \frac{(1+\n{\tilde{\by}})^p(1+\n{\tilde{\bz}})^p}{n^{p+d/2}}\\
        &\ll \frac{(1+\n{{\by}})^p(1+\n{{\bz}})^p}{n^{p'+d/2}}.
\end{align*}
    This completes the proof of \eqref{eq:ballot1}. The proof of \eqref{eq:ballot2} is similar and is omitted here for brevity.
\end{proof}

\subsubsection{A large deviation computation}\label{242}

In the following, we work with a sequence $\{\bxi_j=(X_j,\bY_j)\}_{j\in\N}$ of i.i.d.~centered random variables in $\R^d$ that are not necessarily spherically symmetric but have densities belonging to $L^r(\R^d)$ for some $r>1$ and a positive definite covariance matrix $V$. Let $\Lambda(\bla)=\log\phi_\bxi(\bla)=\log\E[e^{\bla\cdot\bxi}]$ and $I(\bx)$ be its Fenchel--Legendre transform.\footnote{Not to be confused with the functions $\Lambda$ and $I$ for the first coordinate which is defined on $\R$, since we may assume $d\geq 2$.} Let also $\bc_2=\nabla I(x/n,\z)$. By definition,
\begin{align}
    \Lambda(\bc_2)=\bc_2\cdot\left(\frac{x}{n},\z\right)-I\left(\frac{x}{n},\z\right).\label{eq:FL identity long}
\end{align}
We have the following counterpart of Proposition \ref{prop:ulb gaussian approx}. The proof is similar and deferred to the appendix.

\begin{proposition}\label{prop:ulb gaussian approx long}
   Fix a large constant $C>0$ and a dimension $d\geq 1$. Suppose that the  $\R^d$-valued  random variable $\bxi$ satisfies conditions (A2), (A5), and (A6) from Section \ref{sec:fpt_for_brw}. Let $(X_j,\bY_j)$ be a sequence of i.i.d.~random variables with the same distribution as $\bxi$ and $(F_n,\bJ_n)=\sum_{j=1}^{n}(X_j,\bY_j)$. Fix $\overline{c}>\underline{c}>0$.
   \begin{enumerate}[(i)]
      
   \item \sloppy Uniformly for  any  $c=c(n)\in [-C\log n,C\log n]$ and any $x\in[ \underline{c}\,n,\overline{c}\,n]$,
    \begin{align}
       \p(\bJ_n\in B_\z(1)\mid F_n\geq x+c)\ll e^{-nI(x/n,\z)-\bc_2\cdot (c,\z)}n^{-(d-1)/2}e^{nI(x/n)+cI'(x/n)}.\label{eq:ub long}
    \end{align}
    \item Fix $a(n)\to\infty$ and $a(n)=o(\log n)$.  Uniformly for  $\by$ with $\n{\by}\leq C\sqrt{n}$ and  $c\in [-C\log n,C\log n]$, and  any $x\in[ \underline{c}\,n,\overline{c}\,n]$, \begin{equation}
        \begin{split}
        &\p(\bJ_n\in B_\by(1)\mid F_n\in[ x+c,x+c+a(n)])\\
        &\hspace{2cm}\gg e^{-nI(x/n,\z)-\bc_2\cdot (c,\by)}n^{-(d-1)/2}e^{nI(x/n)+cI'(x/n)}.
        \end{split}\label{eq:lb long}  
    \end{equation}
   \end{enumerate}
 
\end{proposition}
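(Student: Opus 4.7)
The plan is to adapt the exponential-tilting and local CLT scheme used for Proposition~\ref{prop:ulb gaussian approx} to the non-spherical setting. The additional prefactor $e^{-nI(x/n,\z)+nI(x/n)}$ appearing in the statement encodes the discrepancy between the joint rate function $I(\cdot,\z)$ of the full vector $\bxi$ and the marginal rate function $I$ of its first coordinate: under spherical symmetry these coincide and the factor degenerates, while in the non-spherical case the optimal tilt $\bc_2=\nabla I(x/n,\z)$ is no longer aligned with the first axis.

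First, I would define the tilted law $\q$ by $d\q/d\p(\bxi)=e^{\bc_2\cdot\bxi-\Lambda(\bc_2)}$, which is well defined by (A6). Under $\q^{\otimes n}$ the sum $(X,\bY)$ has mean $(x,\z)$ and positive definite covariance $n\Sigma$ with $\Sigma=\nabla^2\Lambda(\bc_2)$, and the Fenchel-Legendre identity \eqref{eq:FL identity long} rewrites $n\Lambda(\bc_2)=\bc_2^{(1)}x-nI(x/n,\z)$. Combined with the weight $e^{-\bc_2\cdot(X,\bY)}$ arising from the change of measure, this produces exactly the exponential prefactor $e^{-nI(x/n,\z)-\bc_2\cdot(c,\bullet)}$ in the joint probability. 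Next, using assumption (A5) together with iterated Young's inequality to place the $n$-fold convolution density in $L^\infty$, a multidimensional local CLT (via Edgeworth expansion and Fourier inversion) yields, uniformly in a window of size $O(\sqrt n\log n)$ around $(x,\z)$,
\begin{equation*}
f_\q^{(n)}(x',\by')=n^{-d/2}\phi_\Sigma\!\left(\tfrac{x'-x}{\sqrt n},\tfrac{\by'-\z}{\sqrt n}\right)+o(n^{-d/2}),
\end{equation*}
together with a uniform upper bound $f_\q^{(n)}\ll n^{-d/2}$ on $\R^d$. Since $x/n\in[\underline c,\overline c]$ ranges over a compact set, the tilt $\bc_2$, the covariance $\Sigma$, and all implicit constants remain uniform.

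For part~(i), since $\bY\in B_\z(1)$ makes $e^{-\bc_2^{(2:d)}\cdot\bY}=\Theta(1)$ and $\bc_2^{(1)}>0$,
\begin{equation*}
\p(\bY\in B_\z(1),\,X\geq x+c)\ll e^{n\Lambda(\bc_2)}\cdot n^{-d/2}\!\int_{x+c}^\infty e^{-\bc_2^{(1)}x'}dx'\asymp n^{-d/2}e^{-nI(x/n,\z)-\bc_2\cdot(c,\z)}.
\end{equation*}
Dividing by the one-dimensional Cram\'er estimate $\p(X\geq x+c)\asymp n^{-1/2}e^{-nI((x+c)/n)}$ and using the Taylor expansion $nI((x+c)/n)=nI(x/n)+cI'(x/n)+o(1)$ (valid since $c=O(\log n)$) yields \eqref{eq:ub long}. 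Part~(ii) is parallel: the LCLT provides a lower bound $f_\q^{(n)}\gg n^{-d/2}$ on $B_\by(1)$ because $\phi_\Sigma(0,\by/\sqrt n)$ is bounded below for $\n\by\leq C\sqrt n$, and $e^{-\bc_2^{(2:d)}\cdot\by'}\asymp e^{-\bc_2^{(2:d)}\cdot\by}$ on $B_\by(1)$. A key observation is that because $\bc_2^{(1)}>0$, the exponential weight $e^{-\bc_2^{(1)}x'}$ decays across the strip $[x+c,x+c+a(n)]$ fast enough that $\int_{x+c}^{x+c+a(n)}e^{-\bc_2^{(1)}x'}dx'\asymp e^{-\bc_2^{(1)}(x+c)}$ irrespective of $a(n)\to\infty$; the same cancellation occurs in the denominator $\p(X\in[x+c,x+c+a(n)])\asymp n^{-1/2}e^{-nI((x+c)/n)}$, so no spurious $a(n)$ factor appears on either side, producing \eqref{eq:lb long}.

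The main technical obstacle is securing sufficient uniformity of the multidimensional local CLT under the tilted measure $\q$, simultaneously over the compact range $x/n\in[\underline c,\overline c]$ of base points and the $O(\log n)$ fluctuation range of $c$ (as well as $\n\by=O(\sqrt n)$). Assumption (A5) is tailored precisely for this: the $L^r$ density supplies the quantitative Fourier decay needed to make the Edgeworth step effective, playing here the analogous role that Lemma~\ref{lemma:edgeworthok} plays in the spherically symmetric proof.
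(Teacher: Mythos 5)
Your proposal is correct and follows essentially the same route as the paper: exponential tilting by $\bc_2=\nabla I(x/n,\z)$, a local CLT under the tilted law (with (A5) supplying the needed non-lattice/density structure in place of the spherically symmetric Lemma~\ref{lemma:edgeworthok}), the Fenchel-Legendre identity to extract the prefactor $e^{-nI(x/n,\z)}$, and division by the one-dimensional Bahadur-Rao estimate together with the Taylor expansion $nI((x+c)/n)=nI(x/n)+cI'(x/n)+o(1)$. The paper implements the CLT step via a discretized sum over unit slices in the first coordinate (as in \eqref{eq:ejn step}) rather than a continuous density; this is a cosmetic difference, and your explicit observation that the geometric decay in $\bc_2^{(1)}>0$ kills any spurious $a(n)$ factor in part~(ii) matches the paper's summation argument.
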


\subsubsection{Upper bound of Theorem \texorpdfstring{\ref{thm:main long}}{}}\label{sec:ub of thm main long}
 Let $\ee>0$ be arbitrary and \begin{align}
    n=\widehat{A}_1(x)=\frac{x}{\widehat{c}_1}+\left(\frac{d+2+2\ee}{2\widehat{c}_1\partial_{x_1} I(\widehat{c}_1,\z)}\right)\,\log x.\label{eq:nx long}
\end{align}
Our goal is to show that the event $\{\tau_x>\widehat{A}_1(x)\}$ has a small probability. 
Define $\bc_2:=\nabla I(x/n,\z)$. By definition,
\begin{align}
    \Lambda(\bc_2)=\bc_2\cdot\left(\frac{x}{n},\z\right)-I\left(\frac{x}{n},\z\right).\label{eq:FL identity long2}
\end{align}

 We set up a barrier event. 
 Pick $\alpha\in(0,\pi/2)$ such that $p_{\alpha,d}=1+\ee$; cf., Lemma \ref{lemma:angle choice}. Recall \eqref{eq:circcone}. 
 Let
$$\widehat{P}_n:=\left\{v\in V_n:~\forall 0\leq k\leq n,~ \bet_{v,n}(k)-\frac{k\bx}{n}\in\K_\alpha(-\bc_2) \right\}$$
and let 
\begin{align}
    \widehat{\Delta}_{n,x}:=\sum_{v\in \widehat{P}_n}\bone_{\widehat{H}_{v,n}(x)}:=\sum_{v\in \widehat{P}_n}\bone_{\{\bet_{v,n}(n)\in B_x\}}\label{eq:hat H}
\end{align}
be the number of particles lying in the target $B_x$ at time $n=\widehat{A}_1(x)$ that is controlled by the barrier given by $\widehat{P}_n$. The quantity $\widehat{\Delta}_{n,x}$ serves as a lower bound for the number of particles lying in the target $B_x$ at time $n$. As a consequence, it suffices to show that 
\begin{align}
    \p(\widehat{\Delta}_{n,x}\geq 1)>1/C\label{eq:wts long}
\end{align} for some $C>0$ independent of $x$, which in turn gives the upper bound for $\tau_x$ by Theorem \ref{theorem:concentration}. 
We compute first and second moments for $\widehat{\Delta}_{n,x}$.
Applying the many-to-one formula, the change of measure \eqref{eq:dqdp long}, and using a similar change of measure argument as in the proof of Proposition \ref{prop:ulb gaussian approx long}, we have
\begin{align*}
    \E[\widehat{\Delta}_{n,x}]&=\rho^n\p\left(\forall 0\leq k\leq n,~ \bS_k-\frac{k\bx}{n}\in\K_\alpha(-\bc_2);\,\bS_n\in B_x\right)\\
    &\gg \rho^n e^{-nI(x/n,\z)}\q\left(\forall 0\leq k\leq n, \bS_k\in\K_\alpha(-\bc_2);\,\n{\bS_n}\leq 1\right).
\end{align*}
By \eqref{eq:dewa2} of Lemma \ref{lemma:rw in cones}, 
$$\q\left(\forall 0\leq k\leq n, \bS_k\in\K_\alpha(-\bc_2);\,\n{\bS_n}\leq 1\right)\asymp n^{-p_{\alpha,d}-d/2}\asymp x^{-p_{\alpha,d}-d/2}.$$
Here, we may without loss of generality replace $\bc_2$ by $M\bc_2$ where $\E_\q[(M\bxi)_i(M\bxi)_j]=\bone_{\{i=j\}}$. This does not affect the analysis since an invertible linear transformation of a half-space containing $\z$ is a half-space containing $\z$ and we are in a regime where $p_{\alpha,d}\approx 1$; see Remark \ref{remark:transform}. 
Using \eqref{eq:nx long}, we have the approximation
$${nI\Big(\frac{x}{n},\z\Big)}= \frac{\log\rho}{\widehat{c}_1}\,x+\left(\frac{(d+2+2\ee)\log\rho}{2\widehat{c}_1\partial_{x_1} I(\widehat{c}_1,\z)}-(d+2+2\ee)\right)\log x+O(1).$$
These altogether lead to 
\begin{align}
    \E[\widehat{\Delta}_{n,x}]&\gg \rho^n\rho^{-x/\widehat{c}_1}\rho^{-{(d+2+2\ee)\log x}/({2\widehat{c}_1\partial_{x_1} I(\widehat{c}_1,\z)})}x^{(d+2+2\ee)/2}x^{-p_{\alpha,d}-d/2}= 1,\label{eq:first moment long}
\end{align}
where we used $p_{\alpha,d}=1+\ee$.

Next, we compute the second moment of $\widehat{\Delta}_{n,x}$, following the same type of arguments that lead to \eqref{eq:second moment last step}. Recall the definition of $\widehat{H}_{v,n}(x)$ from \eqref{eq:hat H}. By conditioning on the closest integer point to $\bet_{v,n}(n-s)\in (n-s)\bx/n+\K_\alpha(-\bc_2)$ and applying \eqref{eq:dewa1} of Lemma \ref{lemma:rw in cones},$\text{ for }v\sim_s w$ (recall that this means that  the distance of $v$ and $w$ in the genealogical tree is equal to $2s$),
\begin{align*}
    &\hspace{0.5cm}\p\left(\widehat{H}_{v,n}(x)\cap \widehat{H}_{w,n}(x)\right)\\
    &\ll \sum_{\by\in \K_\alpha(-\bc_2)\cap\mathbb{Z}^d}\left(e^{-(n-s)I({x}/{n},\z)-\bc_2\cdot\by}(\max(1,n-s))^{-p_{\alpha,d}-d/2}(1+\n{\by})^{2p_{\alpha,d}}\right)\\
    &\hspace{2cm}\times \left(e^{-sI({x}/{n},\z)-\bc_2\cdot(-\by)}s^{-p_{\alpha,d}-d/2}(1+\n{\by})^{2p_{\alpha,d}}\right)^2\\
    &\ll e^{-(n+s)I({x}/{n},\z)}(\max(1,n-s))^{-p_{\alpha,d}-d/2}s^{-2p_{\alpha,d}-d}\sum_{\by\in \K_\alpha(-\bc_2)\cap\mathbb{Z}^d}e^{\bc_2\cdot \by}(1+\n{\by})^{6p_{\alpha,d}}.
\end{align*}
By the definition \eqref{eq:circcone},
\begin{align*}
    \sum_{\by\in \K_\alpha(-\bc_2)\cap\mathbb{Z}^d}e^{\bc_2\cdot \by}(1+\n{\by})^{6p_{\alpha,d}}&= \sum_{j=0}^\infty\sum_{\substack{\by\in \K_\alpha(-\bc_2)\cap\mathbb{Z}^d\\ \by\cdot(-\bc_2)\in[j,j+1]}}e^{\bc_2\cdot \by}(1+\n{\by})^{6p_{\alpha,d}}\\
    &\ll \sum_{j=0}^\infty e^{-j}(1+j)^{12dp_{\alpha,d}}\ll 1.
\end{align*}
As a consequence and using $p_{\alpha,d}=1+\ee$,$\text{ for }v\sim_s w$,
\begin{align*}
    \p\left(\widehat{H}_{v,n}(x)\cap \widehat{H}_{w,n}(x)\right)&\ll e^{-(n+s)I({x}/{n},\z)}(\max(1,n-s))^{-p_{\alpha,d}-d/2}s^{-2p_{\alpha,d}-d}\\
    &\ll \rho^{-(n+s)}x^{(p_{\alpha,d}+d/2)(n+s)/n}(\max(1,n-s))^{-p_{\alpha,d}-d/2}s^{-2p_{\alpha,d}-d}.
\end{align*}
Therefore, by assumption (A1),
\begin{align*}
    \E[\widehat{\Delta}_{n,x}^2]&\ll \E[\widehat{\Delta}_{n,x}]+\sum_{s=1}^n\rho^{n+s}\p\left(\widehat{H}_{v,n}(x)\cap \widehat{H}_{w,n}(x)\text{ for }v\sim_s w\right)\\
    &\ll 1+\sum_{s=1}^n x^{(p_{\alpha,d}+d/2)(n+s)/n}(\max(1,n-s))^{-p_{\alpha,d}-d/2}s^{-2p_{\alpha,d}-d}\ll 1,
\end{align*}
where in the last step we argue similarly as in the proof of Lemma 2.7 of \citep{bramson2016convergence}. Combined with \eqref{eq:first moment long} and the Paley-Zygmund inequality leads to \eqref{eq:wts long}. This proves the desired upper bound.

\subsubsection{Lower bound of Theorem \texorpdfstring{\ref{thm:main long}}{}}\label{sec:lb of thm main long}
Fix $\ee>0$. 
Define
$$\widehat{A}_2(x):=\frac{x}{\widehat{c}_1}+\frac{(d+2-2\ee)\log x}{2\,\widehat{c}_1\partial_{x_1} I(\widehat{c}_1,\z)}.$$
Our goal is to show that the event $\{\tau_x\leq \widehat{A}_2(x)\}$ has a small probability. We first consider the number $M_x$ of particles in $B_x$ at time $n=\widehat{A}_2(x)$. 
We find $\alpha\in(0,\pi)$ such that $p_{\alpha,d}=1-\ee/2$. 
Recalling $\bc_2=\nabla I(x/n,\z)\in \R_+\times\R^{d-1}$, we may assume that $\bx/n\not\in\K_\alpha(-\bc_2)$. 
We may decompose $M_x=\sum_{\ell=0}^{n}M_{x,\ell}$, where
$$M_{x,n}:=\#\left\{v\in V_{n}:\bet_{v,n}(n)\in B_x;\,\forall 0\leq k\leq n,\,\bet_{v,n}(k)\in \frac{k\bx}{n}+\bpsi_n(k)+\K_\alpha(-\bc_2)\right\},$$
where $\bpsi_n(k):=(C_1+C_2(\log(\min(k,n-k)))_+)\,\bc_2$ for some $C_1,C_2>0$ to be determined, and for $0\leq \ell\leq n-1$,
\begin{align*}
    M_{x,\ell}:=\#\Bigg\{v\in V_{n}:\bet_{v,n}(n)\in B_x;\,&\forall 0\leq k\leq \ell,\,\bet_{v,n}(k)\in \frac{k\bx}{n}+\bpsi_n(k)+\K_\alpha(-\bc_2),\\
    &\bet_{v,n}(\ell+1)\not\in \frac{(\ell+1)\bx}{n}+\bpsi_n(\ell+1)+\K_\alpha(-\bc_2)\Bigg\}.
\end{align*}
Denote by $\{\bS_n\}_{n\geq 1}$ the random walk with i.i.d.~increments $\{\bxi_i\}_{i\geq 1}$. 
Applying the change of measure \eqref{eq:dqdp long} and \eqref{eq:ballot1} of Lemma \ref{lemma:newballot}, we have
\begin{align*}
    \E[M_{x,n}]&=\rho^n\p\left(\bS_n\in B_x;\,\forall 0\leq k\leq n,\,\bS_k\in \frac{k\bx}{n}+\bpsi_n(k)+\K_\alpha(-\bc_2)\right)\\
    &=\rho^n e^{-nI(x/n,\z)}\q\left(\bS_n\in B_\z;\,\forall 0\leq k\leq n,\,\bS_k\in \bpsi_n(k)+\K_\alpha(-\bc_2)\right)\\
    &\ll \rho^n e^{-nI(x/n,\z)}n^{-p_{\alpha,d}+\ee/4-d/2}(1+C_1)^{p_{\alpha,d}}\\
    &\ll x^{-\ee/4}.
\end{align*}

Next, we bound $\E[M_{x,\ell}]$ for $0\leq\ell\leq n-1$. We condition on the location of $\bet_{v,n}(\ell)$, which may take value in ${\ell\bx}/{n}+\bpsi_n(\ell)+\K_\alpha(-\bc_2)$. This leads to
\begin{align}
\begin{split}
    \label{eq:Mxl}
    &\hspace{0.5cm}\E[M_{x,\ell}]\\
    &\leq\rho^\ell\sum_{\substack{\bu\in\mathbb{Z}^d\\ \bu-({\ell\bx}/{n}+\bpsi_n(\ell))\in\K_\alpha(-\bc_2)}}\p\left(\forall 0\leq k\leq \ell,\,\bS_k\in \frac{k\bx}{n}+\bpsi_n(k)+\K_\alpha(-\bc_2),\,\bS_\ell\in B_\bu\right)\\
    &\hspace{4.5cm}\times\p\left(\bxi\in -\bu+\frac{(\ell+1)\bx}{n}+\bpsi_n(\ell+1)+(\K_\alpha(-\bc_2))^{\mathrm{c}}\right).
\end{split}
\end{align}
We bound separately the above two probabilities for a fixed $\ell$. Applying the change of measure \eqref{eq:dqdp long} and \eqref{eq:ballot2} of Lemma \ref{lemma:newballot},
\begin{align*}
    &\hspace{0.5cm}\p\left(\forall 0\leq k\leq \ell,\,\bS_k\in \frac{k\bx}{n}+\bpsi_n(k)+\K_\alpha(-\bc_2),\,\bS_\ell\in B_\bu\right)\\
    &=e^{-\ell I(\bu/\ell)}\q\left(\forall 0\leq k\leq \ell,\,\bS_k\in \bpsi_n(k)+\K_\alpha(-\bc_2),\,\bS_\ell\in B_{\bu-\ell\bx/n}\right).\\
    &\ll e^{-\ell I(\bu/\ell)}n^{-p_{\alpha,d}+\ee/4-d/2}(1+C_1)^{p_{\alpha,d}}\\
    &\ll \rho^{-\ell}x^{-\ee/4} \n{\bu-\frac{\ell\bx}{n}}e^{- \bc_2\cdot(\bu-\ell\bx/n)}.
\end{align*}
On the other hand, since the moment generating function of $\bxi$ exists in a neighborhood of $\bc_2$, we have for some $\delta>0$,
$$\p(\bxi\in B_x)\ll e^{-(1+\delta)\bc_2\cdot\bx}.$$
It follows that for $\bv\in\K_\alpha(-\bc_2)$,
\begin{align*}
    &\hspace{0.5cm}\p\left(\bxi\in -\bv+\frac{\bx}{n}+\bpsi_n(\ell+1)-\bpsi_n(\ell)+(\K_\alpha(-\bc_2))^{\mathrm{c}}\right)\\
    &\ll \exp\left(-(1+\delta)\inf_{\by\in (\K_\alpha(-\bc_2))^{\mathrm{c}}}\left(\bc_2\cdot \left(\by-\bv+\frac{\bx}{n}+\bpsi_n(\ell+1)-\bpsi_n(\ell)\right)\right)\right)\\
    &\leq \exp\left(-(1+\delta)\inf_{\by\in (\K_\alpha(-\bc_2))^{\mathrm{c}}}\left(\bc_2\cdot \left(\by-\bv\right)\right)\right),
\end{align*}
where we have used that $(x/n,\z)\in (\K_\alpha(-\bc_2))^c$.

Inserting the above estimates back to \eqref{eq:Mxl} with the change of variable $\bv=\bu-({\ell\bx}/{n}+\bpsi_n(\ell))$ leads to
\begin{align*}\E[M_{x,\ell}]&\ll \sum_{\substack{\bv\in\mathbb{Z}^d\cap\K_\alpha(-\bc_2)}}x^{-\ee/4} \n{\bv+\bpsi_n(\ell)}e^{- \bc_2\cdot(\bv+\bpsi_n(\ell))}\\
    &\hspace{3cm}\times\exp\left(-(1+\delta)\inf_{\by\in (\K_\alpha(-\bc_2))^{\mathrm{c}}}\left(\bc_2\cdot \left(\by-\bv\right)\right)\right)\\
    &\ll x^{-\ee/4}\sum_{j=0}^\infty e^{-(1+\delta)j}\sum_{\substack{\bv\in\mathbb{Z}^d\cap\K_\alpha(-\bc_2)\\ \inf_{\by\in (\K_\alpha(-\bc_2))^{\mathrm{c}}}\left(\bc_2\cdot \left(\by-\bv\right)\right)\in[j,j+1]}} \n{\bv+\bpsi_n(\ell)}e^{- \bc_2\cdot(\bv+\bpsi_n(\ell))}\\
    &\ll x^{-\ee/10}\max(1,\min(\ell,n-\ell))^{-C_2\n{\bc_2}^2}\\
    &\hspace{3cm}\times\sum_{j=0}^\infty e^{-(1+\delta)j}\sum_{\substack{\bv\in\mathbb{Z}^d\cap\K_\alpha(-\bc_2)\\ \inf_{\by\in (\K_\alpha(-\bc_2))^{\mathrm{c}}}\left(\bc_2\cdot \left(\by-\bv\right)\right)\in[j,j+1]}} \n{\bv}e^{- \bc_2\cdot\bv},
\end{align*}where the last step follows from the definition of $\bpsi_n(\ell)$ and $\bpsi_n(\ell)\ll \log n\asymp \log x$. 
By elementary geometry and properties of $\K_\alpha(-\bc_2)$, for $j\geq 0$,
$$\sum_{\substack{\bv\in\mathbb{Z}^d\cap\K_\alpha(-\bc_2)\\ \inf_{\by\in (\K_\alpha(-\bc_2))^{\mathrm{c}}}\left(\bc_2\cdot \left(\by-\bv\right)\right)\in[j,j+1]}} \n{\bv}e^{- \bc_2\cdot\bv}\ll e^{-j}.$$
Together we obtain
\begin{align*}
     \E[M_{x,\ell}]&\ll x^{-\ee/10}\max(1,\min(\ell,n-\ell))^{-C_2\n{\bc_2}^2}\sum_{j=0}^\infty e^{-\delta j}\\
     &\ll x^{-\ee/10}\max(1,\min(\ell,n-\ell))^{-C_2\n{\bc_2}^2}.
\end{align*}
We thus conclude with the following bound on $\E[M_x]$:
$$\E[M_x]= \sum_{\ell=0}^n M_{x,\ell}\ll x^{-\ee/4}+x^{-\ee/10}\sum_{\ell=0}^{n-1}\max(1,\min(\ell,n-\ell))^{-C_2\n{\bc_2}^2}.$$
With $C_2$ picked large enough, we have $\E[M_x]\ll x^{-\ee/10}$. 

The rest of the arguments is standard. Recall the weaker bound of \eqref{eq:weak taux}. If a particle reaches $B_x$ at time $n-t$ where $t=O(\log x)$, the probability that one of its descendants is located in $B_x$ at time $n$ is $\gg (\log x)^{-d}$. On the other hand, $\E[M_x]\ll x^{-\ee/10}=o((\log x)^{-d})$. As a consequence, with probability $o(1)$, there is no particle present in $B_x$ before time $n=\widehat{A}_2(x)$. This finishes the proof of the lower bound of Theorem \ref{thm:main long}.

\subsection{Proofs of remaining corollaries}\label{sec:coroproof}
\subsubsection{Proof of Corollary \texorpdfstring{\ref{coro:slln}}{}}
Denote by $\tau_x(r)$ the first passage time of the BRW to a sphere of radius $r>0$ centered at $\bx=(x,\z)$. Note that for a fixed $r>0$, the proofs of Theorems \ref{theorem:concentration} and \ref{thm:main long} go through. 

Using elementary geometry, we have
\begin{align}
    \inf_{x\in[n,n+1]}\tau_x(1)\geq \max(\tau_n(2),\tau_{n+1}(2))\label{eq:geo1}
\end{align}
and
\begin{align}
    \sup_{x\in[n,n+1/2]}\tau_x(1)\leq \min\Big(\tau_{n}\big(\frac{1}{2}\big),\tau_{n+1/2}\big(\frac{1}{2}\big)\Big).\label{eq:geo2}
\end{align}
Applying Theorems \ref{theorem:concentration} and \ref{thm:main long}, we have for any fixed $\ee>0$,
$$\sum_{n\in\N}\p\bigg(\Big|\frac{\tau_n(2)}{n}-\frac{1}{\widehat{c}_1}\Big|>\ee\bigg)+\sum_{n\in\N}\p\bigg(\Big|\frac{\tau_{n/2}(1/2)}{n/2}-\frac{1}{\widehat{c}_1}\Big|>\ee\bigg)<\infty.$$
By the Borel--Cantelli lemma, 
$$\frac{\tau_n(2)}{n}\to\frac{1}{\widehat{c}_1}\quad\text{ and }\quad \frac{\tau_{n/2}(1/2)}{n/2}\to\frac{1}{\widehat{c}_1}\quad\text{a.s.}$$
Applying \eqref{eq:geo1} and \eqref{eq:geo2} then yields that almost surely,
$$\limsup_{x\to\infty}\frac{\tau_x(1)}{x}\leq \frac{1}{\widehat{c}_1}\leq \liminf_{x\to\infty}\frac{\tau_x(1)}{x}.$$
This proves 
Corollary \ref{coro:slln}.

\subsubsection{Proof of Corollary \texorpdfstring{\ref{coro:density}}{}}

Denote by $E_n$ the event that the range 
$$R_n=\{\bet_{v,n}(n):v\in V_n\}$$
is not $(1/n)$-dense in $(n/C_2)B_\z$ (meaning that there exists $\by\in (n/C_2)B_\z$ with $(\by+(1/n)B_\z)\cap R_n=\emptyset$). 
To bound $\p(E_n)$ from above, we need a slight modification of Theorem \ref{thm:main long} that applies for balls of radii $1/n$ instead of a fixed radius. 
By performing the same analysis in Section \ref{sec:ub of thm main long}, we have the upper bound
$$\tau_{x,1/n}\leq \frac{x}{\widehat{c}_1}+C_1\log\log x+O_\p(1),$$
where $\tau_{x,1/n}$ denotes the first passage time to a ball of radius $1/n$ centered at $(x,\z)$ and $C_1>0$. 
We need to show that $\widehat{c}_1$ is large in all directions. For a $d\times d$ orthogonal matrix $\Theta$, denote by $\widehat{c}_1(\Theta)$ the $\widehat{c}_1$ corresponding to the BRW with jump distribution $\Theta \bxi$. By assumption (A5) and our assumption that $\phi_\bxi$ is well-defined in a neighborhood of $\z$, the domain of $I$ contains a neighborhood of $\z$, and hence 
\begin{align}
    \inf_{\Theta}\widehat{c}_1(\Theta)>0.\label{eq:theta condition}
\end{align}
Applying \eqref{eq:theta condition} gives that there exists $C_2>0$, so that $\tau_{x,1/n,\bth}\leq (C_2/2)x+O_\p(1)$ in all directions $\bth\in \S^{d-1}$. 
Therefore, by applying Theorem \ref{theorem:concentration},
\begin{align*}
    \p(E_n)\leq \sum_{j=1}^{n/C_2}Cj^{d-1}\sup_{\bth\in \S^{d-1}}\p(\tau_{j,1/n,\bth}\geq n)\leq \sum_{j=1}^{n/C_2}Cj^{d-1}e^{-n/C_3}\ll e^{-n/C_4}
\end{align*}for some $C_3,C_4>0$. 
Thus $\sum_n\p(E_n)<\infty$. 
The remaining arguments follow in the same way as Corollary 2.5 of \citep{oz2023}.

\begin{remark}
    We expect that using a similar argument, one can prove \emph{uniform} versions of \eqref{eq:tauxasymp} and \eqref{eq:tauxasymp long} under stronger assumptions on $I$. For instance,
    $$\sup_{\Theta\in\S^{d-1}}\frac{1}{\log x}\Big|\tau_{\Theta \bx}-\Big(\frac{x}{\widehat{c}_1(\Theta)}+\frac{d+2}{2\widehat{c}_1(\Theta)\partial_{x_1} I_{\Theta}(\widehat{c}_1(\Theta),\z)}\,\log x\Big)\Big|\to 0\quad\text{in probability},$$
    where $I_{\Theta}$ is the large deviation rate function of $\Theta\bxi$.
\end{remark}

\section{Extensions and applications}

\subsection{The delayed branching model}
\label{sec:newbrwproof}
In this section, we revisit the applications of BRW in modeling polymer networks. 
A polymer network is a three-dimensional disordered structure consisting of long sequences of monomers, called polymer chains, connected via chemical bonds. It turns out that in the study of polymers, random walks provide a decent statistical approximation to polymer chains \citep{flory1960elasticity}. In turn, any polymer chain can be bonded to another chain at a certain site (e.g. a monomer) also by chemical bonds called cross-links. 

The resulting network topology is generally random. A chain segment is a connected segment of monomers inside a polymer chain with the property that each of its two ends is connected, via cross-links, to another polymer chain (not necessarily the same one), as mentioned earlier. Chain segments have varying (random) lengths. 

As it turns out, macroscopic mechanical properties, such as elasticity and fracture toughness, are largely determined by the distribution of shortest paths (SP) between distant monomers \citep{yin2020topological}. Understanding the statistics of these SP distributions is, therefore, central to predicting network behavior under deformation.

The FPT for the BRW to reach a unit ball at a distance of $x$ can approximate the SP between monomers placed at a distance $x$ from each other in a polymer network \citep{zhang2024modeling}. The intuition can be appreciated from the picture in Figure \ref{fig:disc_schematic}. Locally, the polymer network at a given cross-link is approximated by the branching mechanism of the BRW.

In physical experiments, obtaining SP statistics is infeasible. However, extracting SP statistics from computational simulations of polymer networks using coarse-grained molecular dynamics (CGMD) has been shown promising~\citep{mohanty2025strength,yin2020topological,yin2024network}.  Now, even though the BRW intuition is local, as argued earlier, and the SP distribution is a global object (involving long distances relative to the size of a single monomer), it was shown based on comparisons to CGMD simulations in \citep{zhang2024modeling}, that the SP statistics can be reproduced by FPT statistics of a small variation of the classical BRW model (the reason of which will be explained below), which we refer to as the \emph{delayed branching} model. The reason for this small variation is to model explicitly the cross-link representing the segment that joins any two polymer chains. 

Consider the \rrm~model satisfying $p_3>0$ and $p_0+p_1+p_3=1$ (i.e., only branching instances are into three particles) with a spherically symmetric jump distribution (for simplicity). The choice of branching into three particles is to be consistent with the real polymer network being modeled, where branching represents the cross-link (bond) between two polymer chains. The first particle would correspond to the continuation of the first polymer chain beyond the point of cross-link whereas, the other two particles would correspond to the segments of the second polymer chain (see \citep{zhang2024modeling} for a detailed description).

The classical BRW assumes that the three particles emerge from the same point. However, the more realistic picture states that the first and second chains are displaced at the point of branching due to the presence of the cross-link. The \emph{delayed branching} model is a modification of such a BRW process, where at each branching event (in which the particle was supposed to give rise to three offsprings), a particle branches into two, and one of the two descendants branches again into two particles in the next time step.\footnote{A particle may die in each of the two branching sub-events.} That is, the branching event (into three particles) is broken into two pieces, one of which is \emph{delayed} by one step. Thus, each branching event consists of two sub-events, which we call types I and II respectively. Here, type I branching corresponds to the branching of the parent, and type II branching corresponds to the branching of one of the two children born in the type I sub-event.
\begin{figure}[t!]
    \centering
    \includegraphics[
    width=0.7\textwidth,
    trim={0.13cm 0.55cm 0.23cm 0.28cm}, 
    clip
  ]{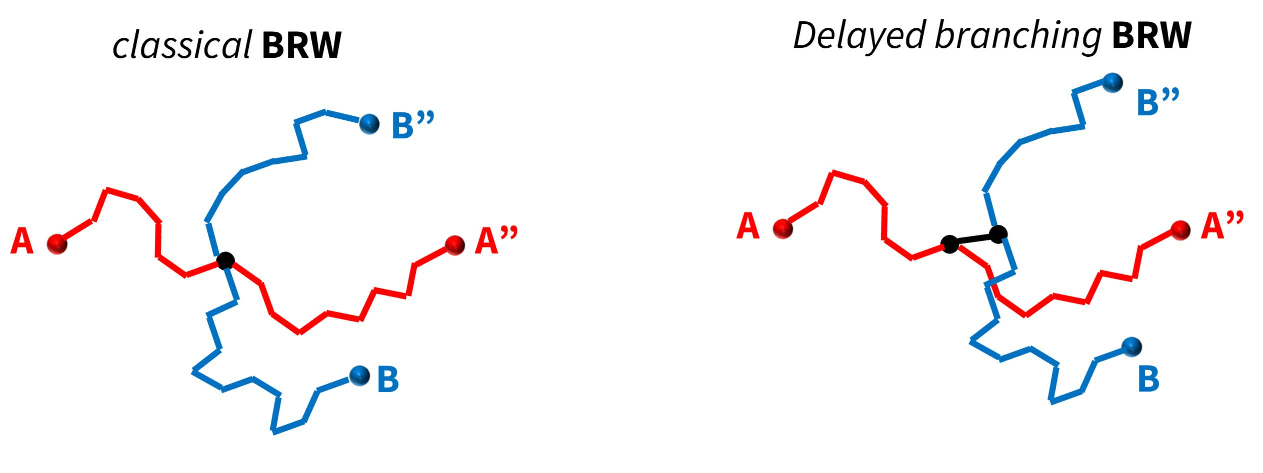}
    \caption{BRW tree representations of polymer networks in the \emph{classical} and the \emph{delayed branching} regimes. The red path from $A$ to $A''$ represents the first polymer chain and the blue path from $B$ to $B''$ represents the second chain that cross-links with the first. In the BRW tree genealogy, the particles $A''$, $B$, and $B''$ are the descendants of the particle starting from $A$.}
    \label{fig:disc_schematic}
\end{figure}

We first derive the analogue for $\rho$ (the parameter governing the growth of the number of particles) in the \rrm. By construction, the expected number $\tilde{N}_n$ of particles in the delayed branching model at time $n$ satisfies
\begin{align}
    \tilde{N}_{n+1}=p_1\tilde{N}_n+p_3 \tilde{N}_n+2p_3\tilde{N}_{n-1}(1-p_0).\label{eq:recur}
\end{align}
The characteristic polynomial of the recursive equation \eqref{eq:recur} is given by
\begin{align}
    x^2-(p_1+p_3)x-2p_3(1-p_0).\label{eq:char poly}
\end{align}
Denote the unique positive zero of \eqref{eq:char poly} by
\begin{align}
    \tilde{\rho}=\frac{1-p_0 }{2}+\sqrt{\frac{(1-p_0 )^2}{4}+2p_3(1-p_0 )}.\label{eq:newrho}
\end{align}
In particular, $\tilde{N}_n\asymp \tilde{\rho}^n$.

Let $\widetilde{c}_1$ satisfy $I(\widetilde{c}_1)=\log\tilde{\rho}$ where the rate function $I$ is  defined in \eqref{eq:ratef}, and $\widetilde{c}_2=I'(\widetilde{c}_1)$.   We define $\widetilde{A}(x)$ similarly as in \eqref{eq:A} with $c_1,\,c_2$ replaced by $\widetilde{c}_1,\,\widetilde{c}_2$.
The parameters $p_3$ and $p_0$ are denoted by $\tl$ and $\tnu$ in \citep{zhang2024modeling} respectively, where $\tl$ represents the \emph{branching rate} of the BRW and $\tnu$ represents the \emph{death rate} of the branches to account for the finite length of the polymer chains in the CGMD simulation. Let $\widetilde{\tau}_x$ be the first passage time to $B_x$.
\begin{theorem}\label{thm:main2}
Assume (A3) and (A4) with $\rho$ replaced by $\tilde{\rho}$. Conditioned upon survival, we have the asymptotic
\begin{align}
    \widetilde{\tau}_x=\widetilde{A}(x)+O_\p(\log\log x)=\frac{x}{\widetilde{c}_1}+\frac{d+2}{2\widetilde{c}_2\widetilde{c}_1}\,\log x+O_\p(\log\log x).\label{eq:tauxasymp2}
\end{align}
In other words, the collection $\{(\widetilde{\tau}_x-\widetilde{A}(x))/\log\log x\}_{x>0}$ is tight.
\end{theorem}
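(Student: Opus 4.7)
The plan is to carry out the proof of Theorem~\ref{thm:main} essentially verbatim, after reencoding the delayed branching BRW as a two-type branching random walk and replacing $\rho,c_1,c_2$ by $\tilde\rho,\widetilde c_1,\widetilde c_2$. First I would label each particle as type $A$ (ordinary) or type $B$ (scheduled for its type~II sub-event at the next step) and observe that the mean offspring matrix is
\[
M=\begin{pmatrix} p_1+p_3 & 2(1-p_0)\\ p_3 & 0\end{pmatrix},
\]
whose characteristic polynomial coincides with \eqref{eq:char poly} and whose Perron eigenvalue is $\tilde\rho$ by \eqref{eq:newrho}. Let $\bh=(h_A,h_B)$ denote the corresponding right Perron eigenvector; this is the weight used to build a multitype many-to-one formula, and the type chain is irreducible so $h_A,h_B>0$.

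Next I would establish the one-dimensional maximum asymptotic
$\widetilde M_n=\widetilde c_1 n-(3/(2\widetilde c_2))\log n+O_\p(1)$
(conditional on survival) for the delayed BRW in dimension one, adapting the Bramson--Ding--Zeitouni framework \citep{bramson2016convergence} to the two-type setting via the tilted measure
\[
\frac{\d\p}{\d\qn}=\frac{h_{\mathrm{type}(v)}}{h_A}\,e^{-\lambda_n(\eta_{v,n}(n)-m_n)-nI(m_n/n)},
\]
where now the rate function is taken with respect to $\log\tilde\rho$. Under $\qn$ the spine follows a mean-zero random walk in space (with the unchanged increment $\xi$) while the off-spine skeleton is a size-biased multitype Galton--Watson tree whose branching rate is still $\tilde\rho$, so that ballot-type estimates such as Lemma~\ref{lemma:beyondcurve} and the particle-count bounds of Proposition~\ref{prop:number of particles} carry through with constants depending on $\bh$ but with the same scaling.

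With these inputs in place, the rest of the proof of Theorem~\ref{thm:main} transfers almost verbatim. The jump estimates of Proposition~\ref{prop:ulb gaussian approx} concern only $\bxi$ and so are unaffected. The tightness result of Theorem~\ref{theorem:concentration} requires only the exponential population growth (with rate $\tilde\rho$) conditional on survival and the exponential large-deviation decay of $\tilde\tau_x/x$, both of which follow from Lemmas~\ref{lemma:weak uniform LD}--\ref{lemma:LD for taux} once the multitype growth is in hand. Plugging these into the first and second moment computations of Section~\ref{sec:proof}, with $c_1,c_2$ replaced by $\widetilde c_1,\widetilde c_2$, produces the two-sided bounds \eqref{eq:toprove1} and \eqref{eq:toprove2} with $A$ replaced by $\widetilde A$ and yields \eqref{eq:tauxasymp2}.

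The main obstacle is the verification of the one-dimensional maximum asymptotic with the precise logarithmic correction $3/(2\widetilde c_2)$: while the linear speed $\widetilde c_1$ for a multitype BRW with Perron eigenvalue $\tilde\rho$ is classical, reproducing the $(3/2)\log n$ correction demands redoing the barrier/ballot arguments of \citep{bramson2016convergence} with the $\bh$-weighted many-to-one formula and checking that the two-step offspring structure does not perturb the asymptotics. This is technical bookkeeping rather than a new difficulty, since the spine increments are unchanged and the Perron eigenvector simply absorbs the periodic type fluctuation into bounded prefactors.
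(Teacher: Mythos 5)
Your proposal is correct in spirit but takes a genuinely different technical route from the paper. You reencode the delayed branching model as a two-type BRW with mean offspring matrix $M$ (whose characteristic polynomial indeed coincides with \eqref{eq:char poly}), introduce the Perron eigenvector $\bh$, and then plan to redo the Bramson--Ding--Zeitouni many-to-one/barrier machinery in the multitype setting to first rederive the one-dimensional maximum asymptotic $\widetilde M_n = \widetilde c_1 n - (3/(2\widetilde c_2))\log n + O_\p(1)$, after which the higher-dimensional argument transfers. The paper instead avoids building a multitype spine framework altogether: it observes that the proof of Theorem~\ref{thm:main} interacts with the genealogy only through two moment quantities---the expected number of particles $N_n$ at time $n$ (used in the first-moment step \eqref{eq:1stmoment}) and the expected number of pairs $N_{n,s}$ at genealogical distance $2s$ (used in the second-moment step \eqref{eq:rhopower})---and then proves directly in Lemmas~\ref{lemma:number1} and \ref{lemma:number2} that $N_n\asymp\tilde\rho^n$ and $N_{n,s}\asymp\tilde\rho^{n+s}$, uniformly in $s$. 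This means the change of measure and ballot lemmas never need to be re-proved in a two-type form; one just substitutes the correct particle-count asymptotics into the existing single-type computations. For the tightness part (Theorem~\ref{theorem:concentration}), the paper similarly isolates the single input that needs to be replaced, namely the large-deviation bound \eqref{eq:ld} for the maximum, and handles it by comparison with independent random walks following \citep{gantert2018large}. Your route is more standard and systematic and would establish the multitype maximum asymptotic as a by-product, but at the cost of adapting the entire barrier/ballot apparatus to the $\bh$-weighted spine; the paper's route is more surgical because it exploits the special feature that the spatial increments are identical across types (so the many-to-one factorization $\E[\#\{v\in V_n:\text{property}\}]=N_n\,\p(S_n\text{ has property})$ holds with the genealogical and spatial parts separating cleanly), reducing the whole modification to two moment estimates. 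Both get you to \eqref{eq:tauxasymp2}, but if you do carry out your plan you should be prepared to verify carefully that the size-biased multitype off-spine tree really reproduces the same $(3/2)\log n$ correction---this is exactly the bookkeeping the paper's moment-based shortcut is designed to avoid.
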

Theorem \ref{thm:main2} partially resolves Conjecture 1 of \citep{zhang2024modeling}. 
 We sketch its proof below, which will be very similar to the proof of Theorem \ref{thm:main}. 
A key ingredient in the proof of Theorem \ref{thm:main} is the analysis of the expected number $N_n$ of particles present at time $n$, conditioned upon survival. If the branching events are independent across layers, then $N_n=\rho^n$. On the other hand, for the delayed branching BRW, due to the two-step branching regime, the branching events are not independent. 
The following two lemmas perform a similar analysis related to the number of particles for the delayed branching model. In particular, the analogue for our expected number of descendants for each particle, $\rho$, becomes $\tilde{\rho}$ for the delayed branching model.

\begin{lemma}[Lemma 5 of \citep{zhang2024modeling}]\label{lemma:number1}
    Conditioned upon survival, the expected number of particles $N_n$ at time $n$ satisfies  $N_n\asymp\tilde{\rho}^n$.
\end{lemma}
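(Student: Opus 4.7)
The plan is to first establish the unconditional asymptotic $\E[\#V_n]\asymp\tilde\rho^n$ and then argue that conditioning on the survival event $S$ only changes this by a multiplicative constant. Taking expectations in the delayed branching dynamics yields the recursion \eqref{eq:recur} for $\tilde N_n:=\E[\#V_n]$. The characteristic polynomial \eqref{eq:char poly} has two real roots, the dominant one being $\tilde\rho$ from \eqref{eq:newrho} and the other being negative with strictly smaller modulus; the standard theory of linear recurrences then gives $\tilde N_n=A\tilde\rho^n+B\lambda_-^n$ with $A>0$ determined by the initial conditions, so $\tilde N_n\asymp\tilde\rho^n$.

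The upper bound on $N_n:=\E[\#V_n\mid S]$ is immediate: since $\tilde\rho>1$ and the branching process is supercritical, $\p(S)>0$, so $N_n\leq \tilde N_n/\p(S)\ll\tilde\rho^n$.

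For the lower bound I would recast the delayed branching model as a two-type Galton--Watson process, labelling each particle as type A (a ``normal'' particle, free to undergo the full offspring distribution in one step) or type B (one of the two children born at a type-I sub-event, obligated to perform the delayed type-II sub-branching at the following step). With this bookkeeping the type at time $n+1$ is a function of the types and independent randomness at time $n$, so the process is Markov; its mean offspring matrix is a $2\times 2$ nonnegative irreducible matrix whose Perron eigenvalue must coincide with $\tilde\rho$, since the total mean-size sequence of this multitype process satisfies exactly the recursion \eqref{eq:recur}. Let $\bv=(v_A,v_B)$ be a positive left eigenvector and let $a_n,b_n$ denote the numbers of type-A and type-B particles at time $n$. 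Then $W_n:=(v_Aa_n+v_Bb_n)/\tilde\rho^n$ is a nonnegative martingale. Assumption (A1) provides finite second moments for the offspring distribution, hence $\{W_n\}$ is bounded in $L^2$, and the multitype Kesten--Stigum theorem yields $W_n\to W_\infty$ almost surely and in $L^1$ with $\{W_\infty>0\}=S$ almost surely. In particular $\E[W_\infty\mid S]>0$. Since $\#V_n\geq(v_Aa_n+v_Bb_n)/\max(v_A,v_B)$, we conclude $N_n\gg\tilde\rho^n\,\E[W_n\mid S]\to\tilde\rho^n\,\E[W_\infty\mid S]\gg\tilde\rho^n$, as desired.

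The main technical obstacle I anticipate is the precise setup of the two-type bookkeeping: one must verify that the type at generation $n+1$ depends only on the types and independent outcomes at generation $n$ (so that multitype Markov theory genuinely applies), write down the $2\times 2$ mean matrix explicitly, and confirm both its irreducibility (which follows from $p_3>0$, as A can produce B, and B deterministically produces A's in the next step) and that its Perron eigenvalue matches the root $\tilde\rho$ of \eqref{eq:char poly}. Once this is in place, the $L^2$ hypothesis from (A1) makes the Kesten--Stigum dichotomy routine and closes the argument.
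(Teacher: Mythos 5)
The paper does not prove this lemma; it is quoted as Lemma~5 of \citep{zhang2024modeling}, so there is no in-paper argument to compare against. Your proof is self-contained and correct, and the two-type bookkeeping you flag as the main obstacle works out exactly as you anticipate: a type-A (normal) particle produces one A with probability $p_1$ and one A plus one B with probability $p_3$, while a type-B (delayed) particle produces two A's with probability $1-p_0$ and none otherwise. The resulting mean matrix
\begin{equation*}
M=\begin{pmatrix} p_1+p_3 & p_3 \\ 2(1-p_0) & 0 \end{pmatrix}
\end{equation*}
is primitive (since $p_3>0$ and $p_0<1$, $M^2$ has all positive entries), its characteristic polynomial is $x^2-(p_1+p_3)x-2p_3(1-p_0)$, i.e.\ exactly \eqref{eq:char poly}, and the offspring counts are bounded, so the Perron eigenvalue equals $\tilde\rho$, the process is positive regular and non-singular, and the multitype Kesten--Stigum theorem applies as you invoke it; Cayley--Hamilton for $M$ confirms that $\tilde N_n=\E[a_n]+\E[b_n]$ indeed obeys \eqref{eq:recur}. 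One small simplification is available for the lower bound: the two-type process conditioned on extinction is subcritical, hence $\E[\#V_n\,\bone_{S^c}]\to 0$, and then $\p(S)\,N_n=\tilde N_n-\E[\#V_n\,\bone_{S^c}]\gg\tilde\rho^n$ follows directly from the unconditional estimate $\tilde N_n\asymp\tilde\rho^n$ without invoking Kesten--Stigum; but your martingale route is equally valid and gives the slightly stronger conclusion $\E[W_n\mid S]\to\E[W_\infty\mid S]>0$.
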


\begin{lemma}
    \label{lemma:number2}
    For $1\leq s\leq n$, let $$N_{n,s}:=\E[\#\{(v,w)\in V_n^2 : v\sim_s w\}]$$be the expected number of pairs of particles at level $n$ whose distance in the genealogical  tree is equal to $2s$. 
    Conditioned upon survival,  $N_{n,s}\asymp \tilde{\rho}^{n+s}$ uniformly in $1\leq s\leq n$.
\end{lemma}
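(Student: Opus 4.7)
The plan is to decompose $N_{n,s}$ according to the most recent common ancestor (MRCA). Every ordered pair $(v,w)\in V_n^2$ with $v\sim_s w$ lies exactly $s$ generations below a unique MRCA $u\in V_{n-s}$, and the two paths from $u$ must diverge at the very first step. Writing $C_u$ for the (random) set of children of $u$ and $D^{(i)}_{s-1}$ for the number of level-$n$ descendants of the $i$-th child of $u$, linearity of expectation gives
\begin{equation*}
N_{n,s} = \E\Biggl[\sum_{u\in V_{n-s}}\,\sum_{\substack{i,j\in C_u \\ i\neq j}} D^{(i)}_{s-1} D^{(j)}_{s-1}\Biggr].
\end{equation*}

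The delayed branching mechanism produces two particle types: ``ordinary'' particles, which independently choose $p_0/p_1/p_3$ offspring behavior, and ``special'' particles born from a type~I event, which undergo forced type~II branching at the next step with each child surviving with probability $1-p_0$. I would track the two types separately. Writing $\mu^{\mathrm{o}}_k$ and $\mu^{\mathrm{s}}_k$ for the expected total number of descendants after $k$ generations starting from a single ordinary or special particle, a Perron--Frobenius analysis of the two-type mean matrix
\begin{equation*}
M=\begin{pmatrix} p_1+p_3 & 2(1-p_0) \\ p_3 & 0 \end{pmatrix}
\end{equation*}
(whose leading eigenvalue is precisely the positive root $\tilde\rho$ of \eqref{eq:char poly}, by construction) shows that $\mu^{\mathrm{o}}_k\asymp\mu^{\mathrm{s}}_k\asymp\tilde\rho^k$, and that the expected numbers of ordinary and of special particles at level $n-s$ are each $\asymp\tilde\rho^{n-s}$. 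This refines Lemma~\ref{lemma:number1}.

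Conditionally on whether $u$ is ordinary (in which case it branches into one ordinary plus one special child with probability $p_3$) or special (in which case it has $K\sim\mathrm{Bin}(2,1-p_0)$ ordinary children), the two descending subtrees are independent delayed BRWs, so
\begin{equation*}
\E\Biggl[\,\sum_{\substack{i,j\in C_u \\ i\neq j}} D^{(i)}_{s-1}D^{(j)}_{s-1}\,\Big|\,u\text{ ordinary}\Biggr]=2p_3\,\mu^{\mathrm{o}}_{s-1}\mu^{\mathrm{s}}_{s-1},\qquad \E\Biggl[\,\cdots\,\Big|\,u\text{ special}\Biggr]=2(1-p_0)^2(\mu^{\mathrm{o}}_{s-1})^2,
\end{equation*}
both of which are $\asymp\tilde\rho^{2s}$. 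Summing over $u\in V_{n-s}$ yields $N_{n,s}\asymp\tilde\rho^{n-s}\cdot\tilde\rho^{2s}=\tilde\rho^{n+s}$. The conditioning on survival changes only multiplicative constants: the upper bound is immediate from $\p(S)>0$, while the lower bound uses that the contribution from $S^c$ to the pair count decays exponentially in $n$ (since $\p(\#V_n>0\mid S^c)$ does, the same input used in the proof of Theorem~\ref{theorem:concentration}).

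The main obstacle is the type bookkeeping: extending Lemma~\ref{lemma:number1} by type and verifying that neither type is anomalously rare or over-represented at any level, uniformly in $s$. This is what the Perron--Frobenius analysis of $M$ provides (via strictly positive left and right eigenvectors for $\tilde\rho$), and once this is in hand, the rest of the argument is a direct generalization of the standard identity $N_{n,s}\asymp\rho^{n+s}$ for a classical Galton--Watson tree with finite second moment.
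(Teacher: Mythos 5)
Your proof is correct and is, at its core, the same decomposition the paper uses: sum over the most recent common ancestor $u\in V_{n-s}$, show each contributes $\asymp\tilde\rho^{2s}$ ordered pairs of level-$n$ descendants in expectation, and multiply by the expected number $\asymp\tilde\rho^{n-s}$ of ancestors. Where you depart from the paper is in how you establish the $\asymp\tilde\rho^{2s}$ per-ancestor contribution. The paper reasons rather tersely: it invokes Lemma~\ref{lemma:number1} as a black box, couples the subtree rooted at $u$ to a fresh delayed BRW to get $\E[N_{u,s}\bone_S]\gg\tilde\rho^s$, bounds $\E[N_{u,s}\bone_S]\ll\tilde\rho^s$ by dropping the indicator, and absorbs the type-II case into the asymptotics with a bare ``extra factor of $2$.'' You instead set up the two-type (ordinary/special) branching process explicitly, verify that the mean matrix $M$ has characteristic polynomial~\eqref{eq:char poly} (indeed, using $p_1+p_3=1-p_0$ this checks out), invoke Perron--Frobenius irreducibility (which holds since $p_3>0$ and $p_0<1$) to get $\mu^{\mathrm{o}}_k\asymp\mu^{\mathrm{s}}_k\asymp\tilde\rho^k$, and then compute the factorial-moment contributions $2p_3\mu^{\mathrm{o}}_{s-1}\mu^{\mathrm{s}}_{s-1}$ and $2(1-p_0)^2(\mu^{\mathrm{o}}_{s-1})^2$ directly. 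Your route is more self-contained and makes the type bookkeeping precise, which the paper's ``extra factor of $2$'' remark glosses over; the paper's route is shorter because it can lean on Lemma~\ref{lemma:number1} and a monotone coupling. Both correctly handle conditioning on survival: the upper bound is trivial from $\p(S)>0$, and for the lower bound your appeal to the exponential decay of $\p(\#V_n>0\mid S^c)$ is one acceptable way to argue that the $S^{\mathsf c}$-contribution to the pair count is $O(1)=o(\tilde\rho^{n+s})$ (one could also note $\E[(\#V_n)^2\bone_{S^{\mathsf c}}]$ is bounded, since on $S^{\mathsf c}$ the process is effectively subcritical).
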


\begin{proof}
By Lemma \ref{lemma:number1}, the expected number of choices of the common ancestor $u$ of $v,w$ is $N_{n-s}\asymp \tilde{\rho}^{n-s}$. We now fix $u$. 
Let $S_u$ denote the survival event of the underlying branching process initiated from $u$. Let $N_{u,s}$ and $N'_{u,s}$ denote the number of particles initiated from $u$ in $s$ steps conditioned on $S$ and $S_u$ respectively. By a coupling argument, if $u$ initiates a branching sub-event of type I,
$$\E[N_{u,s}\bone_{S}]\geq \E[N'_{u,s}\bone_{S_u}]\asymp N_s\asymp \tilde{\rho}^s.$$
Moreover, $\E[N_{u,s}\bone_{S}]$ can be controlled by the expected number of particles without conditioning upon survival, which is $\asymp \tilde{\rho}^s$. 
Otherwise, if $u$ initiates a branching sub-event of type II, an extra factor of $2$ exists but will be absorbed into the asymptotics. Therefore we conclude that $\E[N_{u,s}\mid S]\asymp \tilde{\rho}^s$ (that $\p(S)\in(0,1)$ has been proven in Lemma 5 of \citep{zhang2024modeling}), and hence $N_{n,s}\asymp \tilde{\rho}^{n-s}(\tilde{\rho}^s)^2=\tilde{\rho}^{n+s}$.
\end{proof}

We briefly identify the modifications required for the proof of Theorem \ref{thm:main2}. Note that the only difference between the classical BRW and delayed branching BRW is the branching structure, i.e., the underlying tree that describes the genealogy of the particles. Given that Theorem \ref{theorem:concentration} holds, the proof of Theorem \ref{thm:main} depends on such structure only through  
    \eqref{eq:1stmoment} and \eqref{eq:rhopower}. The corresponding modifications can be made by replacing $\rho$ by $\tilde{\rho}$ in view of Lemmas \ref{lemma:number1} and \ref{lemma:number2}. 
    The proof of Theorem \ref{theorem:concentration} depends on large deviation estimates for the maximum of delayed branching BRW in \eqref{eq:ld}. Nevertheless, this applies by performing the same proof as in \citep{gantert2018large} by comparing against the maximum of independent random walks (Theorems 3.1 and 5.2 therein). The other changes needed in Theorem \ref{theorem:concentration} are minor and will be omitted for brevity, as the constants can always be adjusted to take into account the delayed branching property. Otherwise, the proof of Theorem \ref{thm:main2} goes through almost verbatim.

The same arguments apply for non-spherically symmetric jumps, where we instead assume (A1), (A2), (A5), and (A6) and replace Theorem \ref{thm:main} by Theorem \ref{thm:main long}.

\subsection{Finer asymptotics for non-spherically symmetric jumps}\label{sec:non-spherical}
We discuss a few cases with non-spherically symmetric jumps where the asymptotic \eqref{eq:tauxasymp long} can be improved. 
The only instance where we explicitly required spherical symmetry of the law of $\bxi$ for the proof of Theorem \ref{thm:main} lies in the proof of Proposition \ref{prop:ulb gaussian approx}, where we apply the change of measure \eqref{eq:dq/dp}. More precisely, applying the exponential tilt only in the $x$-coordinate in \eqref{eq:dq/dp} does not guarantee that the random variable $\bxi$ is centered in other coordinates under the new measure; cf.~\eqref{eq:dqdp long}. 

In certain nice cases, the non-spherically symmetric case can be analyzed using the same approach. Let $I$ be the large deviation rate function for the jump $\bxi$ and $\widehat{c}_1$ be such that $I(\widehat{c}_1,\z)=\log\rho$. 
One may also define the constant $c_1$ that arose in assumption (A4) for a non-spherically symmetric jump $\bxi$, through the relation $I^{(1)}(c_1)=\log\rho$ where $I^{(1)}$ is the large deviation rate function for the first marginal of $\bxi$. 
If $\widehat{c}_1=c_1$, we can show using essentially the same proof of Theorem \ref{thm:main} that
$$\tau_x=\frac{x}{\widehat{c}_1}+\frac{d+2}{2\widehat{c}_1\partial_{x_1}I(\widehat{c}_1,\z)}\,\log x+O_\p(\log\log x).$$
The following proposition gives a necessary and sufficient condition for this to happen for all $\rho>1$.

\begin{proposition}\label{prop:c1}
    It holds that $\widehat{c}_1=c_1$ for all $\rho>1$ if and only if $\E[{\xi^{(i)}}\mid{\xi^{(1)}}]=0$ for all $2\leq i\leq d$, where ${\xi^{(i)}}$ denotes the $i$-th coordinate of the $\R^d$-valued random variable $\bxi$.
\end{proposition}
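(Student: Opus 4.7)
The plan is to rewrite the equality $\widehat{c}_1 = c_1$ as an equality between rate functions, pin that down by first-order conditions on the log moment generating function, and finally recognize the resulting condition, quantified over $\rho$, as forcing conditional expectations to vanish through uniqueness of the Laplace transform.

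First I would establish the universal inequality $I(x,\z)\geq I^{(1)}(x)$ on the common domain, since restricting the supremum in \eqref{eq:I long} to $\bla=(\lambda,\z)$ reproduces the supremum defining $I^{(1)}(x)$ in \eqref{eq:ratef}. Both $I(\cdot,\z)$ and $I^{(1)}$ are convex, $C^1$ in the interior of their domains, and minimized at $0$ (from (A2)), hence strictly increasing on their positive axes. Combined with $I^{(1)}(c_1)=\log\rho=I(\widehat{c}_1,\z)$, this yields $\widehat{c}_1\leq c_1$ with equality iff $I(c_1,\z)=I^{(1)}(c_1)$. As $\rho$ ranges over an open subinterval of $(1,\infty)$, $c_1=(I^{(1)})^{-1}(\log\rho)$ sweeps an open interval $J\subset(0,\infty)$; so the proposition's condition is equivalent to $I(x,\z)=I^{(1)}(x)$ for every $x\in J$.

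Next I would pin down equality via first-order conditions. Let $\lambda^\ast(x)=(I^{(1)})'(x)$ be the unique maximizer in \eqref{eq:ratef}. Under (A5), $\log\phi_\bxi$ is strictly convex on the interior of its domain, so the maximizer $\bla^\ast$ in \eqref{eq:I long} at the point $(x,\z)$ is unique and characterized by $\nabla\log\phi_\bxi(\bla^\ast)=(x,\z)$. The identity $I(x,\z)=I^{(1)}(x)$ forces $\bla^\ast=(\lambda^\ast(x),\z)$, and the last $d-1$ coordinates of the gradient condition then read
\begin{equation*}
    \E\bigl[\xi_i\,e^{\lambda^\ast(x)\xi_1}\bigr]=0\qquad (i=2,\dots,d).
\end{equation*}
Since $x\mapsto\lambda^\ast(x)$ is a $C^1$ strictly increasing bijection from $J$ onto an open interval $\Lambda\subset(0,\infty)$, the proposition's condition becomes $\E[\xi_i\,e^{\lambda\xi_1}]=0$ for every $\lambda\in\Lambda$ and every $i\geq 2$.

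Finally, I would convert this open-set condition into the almost-sure vanishing of $\E[\xi_i\mid\xi_1]$. Conditioning on $\xi_1$ rewrites the identity as $\E[\E[\xi_i\mid\xi_1]\,e^{\lambda\xi_1}]=0$ for $\lambda\in\Lambda$. Split $\eta:=\E[\xi_i\mid\xi_1]=\eta_+-\eta_-$ into positive and negative parts; the finite measures $\eta_\pm\cdot\mathcal{L}(\xi_1)$ then have Laplace transforms that agree on $\Lambda$. By analyticity of the Laplace transform on the interior of its domain of convergence (which contains $\Lambda$ thanks to (A6)) and uniqueness, the two measures coincide, so $\eta=0$ almost surely. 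The converse is immediate: if $\E[\xi_i\mid\xi_1]=0$ a.s.\ for $i\geq 2$, then $\E[\xi_i e^{\lambda\xi_1}]=\E[e^{\lambda\xi_1}\E[\xi_i\mid\xi_1]]=0$ for all $\lambda$ in the domain, so $(\lambda^\ast(x),\z)$ satisfies the first-order conditions for $I(x,\z)$ and, by strict convexity, is the unique maximizer, giving $I(x,\z)=I^{(1)}(x)$ for every $x$ in the relevant interval. The crux of the argument—and the reason the quantifier ``for all $\rho>1$'' cannot be dropped—is this last step: the first-order condition at a single $\lambda$ gives one scalar equation, which is strictly weaker than the almost-sure vanishing of $\E[\xi_i\mid\xi_1]$; only the open-interval version of the condition activates Laplace-transform uniqueness.
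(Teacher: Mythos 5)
Your proof is correct and follows essentially the same route as the paper: translate $\widehat{c}_1=c_1$ into the identity $I(\cdot,\z)=I^{(1)}(\cdot)$, reduce this via first-order conditions on the log-moment generating function to $\E[\xi_i e^{\lambda\xi_1}]=0$ over a range of $\lambda$, and conclude by Laplace-transform uniqueness. Your treatment is a bit more careful than the paper's at two points — explicitly tracking that $\lambda^\ast(x)$ only sweeps an open interval $\Lambda\subset(0,\infty)$ (rather than all of $\R$) and invoking analyticity to extend, and proving the converse by first-order conditions plus strict convexity where the paper cites Jensen — but the substance is the same.
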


\begin{proof}
Since $I^{(1)}$ is strictly increasing on its domain intersecting $(0,\infty)$, an equivalent condition for $\widehat{c}_1=c_1$ for all $\rho$ is that $I^{(1)}(c_1)=I(c_1,\z)$ for all $c_1$. By definition, this amounts to
    $$\sup_{\lambda_1\in\R}\left(\lambda_1c_1-\E[e^{\lambda_1{\xi^{(1)}}}]\right)=\sup_{\lambda_1,\dots,\lambda_d\in\R}\left(\lambda_1c_1-\E[e^{\bla\cdot\bxi}]\right).$$
Since the range of $I'$ is $\R_+$, this is equivalent to
$$\forall \lambda_1\in\R,~\inf_{\lambda_2,\dots,\lambda_d\in\R}\E[e^{\bla\cdot\bxi}]=\E[e^{\lambda_1{\xi^{(1)}}}].$$
By taking partial derivatives with respect to $\lambda_i$ at $\lambda_i=0,~2\leq i\leq d$, we obtain that for each $2\leq i\leq d,~\E[{\xi^{(i)}}e^{\lambda_1{\xi^{(1)}}}]=0$. Since this holds for all $\lambda_1\in\R$, by the uniqueness of the Laplace transform, we must have $\E[{\xi^{(i)}}\mid{\xi^{(1)}}]=0$.  The reverse direction is obvious by Jensen's inequality.  
\end{proof}

There is yet a different method to deal with distributions on $\R^d$ that can be transformed into a spherically symmetric distribution under an invertible linear transformation of space. Suppose that $\bxi\dd T(\bzeta)$ for some spherically symmetric distribution $\bzeta$ and invertible linear map $T$.
The FPT for the BRW with jump $\bxi$ to $B_x$ is then equivalent to the FPT for the BRW with jump $\bzeta$ to $T^{-1}(B_x)$. Since Theorem \ref{thm:main} holds with the radius of the ball replaced by any constant (with the same proof) and $T$ is invertible, we may without loss of generality replace $T^{-1}(B_x)$ by $B_{T^{-1}\bx}$, where $\bx=(x,\z)$. 
This leads to 
\begin{align}
    \tau_x=\frac{\n{T^{-1}\bx}}{\widehat{c}_1(\bzeta)}+\frac{d+2}{2\widehat{c}_1(\bzeta)\partial_{x_1}I_\bzeta(\widehat{c}_1(\bzeta),\z)}\,\log x+O_\p(\log\log x).\label{eq:tauxlong1}
\end{align}
Summarizing the discussions above, we arrive at the following.
\begin{theorem}\label{thm:main_asymmetric_better}Assume assumption (A1). 
Suppose that either of the following holds:
    \begin{enumerate}[(i)]
        \item the law of $\bxi=({\xi^{(1)}},\dots,{\xi^{(d)}})$ satisfies $\E[{\xi^{(i)}}\mid{\xi^{(1)}}]=0$ a.s.~for all $2\leq j\leq d$, and satisfies (A2) and (A4);
        \item there exists an invertible linear transformation $T$ on $\R^d$ with $\bxi\dd T(\bzeta)$ for some spherically symmetric distribution $\bzeta$ satisfying (A2)--(A4).
    \end{enumerate}
    Then the first passage time $\tau_x$ has the asymptotic
    \begin{align}
        \tau_x=\frac{x}{\widehat{c}_1}+\frac{d+2}{2\widehat{c}_1\partial_{x_1}I(\widehat{c}_1,\z)}\,\log x+O_\p(\log\log x).\label{eq:tauxlong2}
    \end{align}
\end{theorem}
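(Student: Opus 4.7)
The plan is to treat cases (i) and (ii) separately, reducing each to a setting already covered by the proof of Theorem \ref{thm:main}. In case (i), Proposition \ref{prop:c1} already tells us that $\widehat c_1=c_1$, where $c_1$ is defined from the one-dimensional rate function $I^{(1)}$ of the first marginal $\xi_1$. The target asymptotic is therefore identical in form to \eqref{eq:tauxasymp}, and the only task is to re-examine the proof of Theorem \ref{thm:main} to check that spherical symmetry was used only for convenience. I will inspect the change of measure \eqref{eq:dpdqn}, which exponentially tilts only in the first coordinate. Under $\q^{(n)}$, the conditional law of $(\xi_2,\ldots,\xi_d)$ given $\xi_1$ is unchanged, so the hypothesis $\E[\xi_i\mid\xi_1]=0$ for $i\ge 2$ gives $\E_{\q^{(n)}}[\xi_i]=\E[\E[\xi_i\mid\xi_1]]=0$. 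Hence the random walks in coordinates $2,\ldots,d$ remain centered under the tilt, which is exactly what was required to obtain the Edgeworth/Gaussian estimates in Proposition \ref{prop:ulb gaussian approx}, and in particular the $n^{-(d-1)/2}$ factor controlling the probability that the last $d-1$ coordinates land in a unit ball. With that ingredient in hand, the remainder of the proof of Theorem \ref{thm:main} (the ballot-theorem, second-moment and barrier arguments of Sections \ref{sec:number of particles}--\ref{sec:proof}) goes through verbatim.

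For case (ii), I will perform a deterministic change of variables in space. If $\bxi\dd T(\bzeta)$ for an invertible linear map $T$, then the BRW with increments $\bxi$ coincides in law with the image under $T$ of the BRW with spherically symmetric increments $\bzeta$. Therefore $\tau_x$ is the first passage time of the $\bzeta$-BRW to the ellipsoid $T^{-1}(B_x)$. Since $T$ is invertible, there is a constant $r>0$ such that
\begin{equation*}
B_{T^{-1}\bx}(r) \;\subseteq\; T^{-1}(B_x) \;\subseteq\; B_{T^{-1}\bx}(1/r),
\end{equation*}
so $\tau_x$ is sandwiched between the $\bzeta$-FPTs to the two balls of fixed radii $r$ and $1/r$ centered at $T^{-1}\bx$. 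Applying Theorem \ref{thm:main} to the spherically symmetric BRW driven by $\bzeta$ (the statement and proof are unchanged if one replaces the unit radius by any fixed positive radius), both sandwiching FPTs share the common asymptotic
\begin{equation*}
\frac{\|T^{-1}\bx\|}{c_1(\bzeta)}+\frac{d+2}{2\,c_1(\bzeta)\,c_2(\bzeta)}\,\log\|T^{-1}\bx\|+O_\p(\log\log x),
\end{equation*}
and in particular $\tau_x$ does as well.

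The final step is a purely algebraic verification that this matches the claimed expression \eqref{eq:tauxlong2}. The identity $\phi_\bxi(\bla)=\phi_\bzeta(T^{\top}\bla)$ yields $I_\bxi(\by)=I_\bzeta(T^{-1}\by)$, and spherical symmetry of $\bzeta$ gives $I_\bzeta(\by)=I_{\zeta_1}(\|\by\|)$. From $I_\bxi(\widehat c_1,\z)=\log\rho=I_{\zeta_1}(c_1(\bzeta))$ we deduce $\widehat c_1\|T^{-1}e_1\|=c_1(\bzeta)$, which immediately gives $x/\widehat c_1=\|T^{-1}\bx\|/c_1(\bzeta)$ and makes the linear terms agree. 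For the logarithmic correction, differentiating $I_\bxi(x_1,\z)=I_{\zeta_1}(x_1\|T^{-1}e_1\|)$ at $x_1=\widehat c_1$ gives $\partial_{x_1}I_\bxi(\widehat c_1,\z)=c_2(\bzeta)\|T^{-1}e_1\|$, so $\widehat c_1\,\partial_{x_1}I_\bxi(\widehat c_1,\z)=c_1(\bzeta)\,c_2(\bzeta)$, and since $\log\|T^{-1}\bx\|=\log x+O(1)$, the logarithmic coefficients match as well. I expect the only genuinely technical step to be the audit of the proof of Theorem \ref{thm:main} in case (i) to confirm that spherical symmetry is never exploited beyond the mean-zero property under the exponential tilt; everything else is either bookkeeping or a clean change-of-variables reduction.
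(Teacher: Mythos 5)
Your proposal is correct and follows essentially the same route as the paper. For case (i) you correctly identify (as the paper's preceding discussion does) that the one-coordinate exponential tilt leaves the conditional law of $(\xi_2,\ldots,\xi_d)$ given $\xi_1$ unchanged, so the hypothesis $\E[\xi_i\mid\xi_1]=0$ keeps the transverse coordinates centered under $\q$ and the Edgeworth/Gaussian machinery of Proposition~\ref{prop:ulb gaussian approx} still delivers the $n^{-(d-1)/2}$ factor; for case (ii) your change-of-variables reduction and the two identities (linear and logarithmic coefficients) are exactly the paper's \eqref{eq:ts1} and \eqref{eq:ts2}, just organized through the scalar relation $I_\bxi(x_1,\z)=I_{\zeta_1}(x_1\|T^{-1}\mathbf{e}_1\|)$ rather than the multivariate chain rule, which is an equivalent and if anything cleaner computation.
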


\begin{proof}
    By comparing \eqref{eq:tauxlong1} and \eqref{eq:tauxlong2}, it suffices to show for case (ii) that
    \begin{align}
        \frac{x}{\widehat{c}_1(\bxi)}=\frac{\n{T^{-1}\bx}}{\widehat{c}_1(\bzeta)}\label{eq:ts1}
    \end{align}
    and that
    \begin{align}
        \widehat{c}_1(\bxi)\partial_{x_1}I_\bxi(\widehat{c}_1(\bxi),\z)=\widehat{c}_1(\bzeta)\partial_{x_1}I_\bzeta(\widehat{c}_1(\bzeta),\z).\label{eq:ts2}
    \end{align}
Let $\mathbf{e}_1=(1,\z)\in\R^d$.
By definition \eqref{eq:I long} and since $T$ is invertible, $I_\bxi(\bx)=I_\bzeta(T^{-1}\bx)$. 
By spherical symmetry of $\bzeta$,
$$I_\bxi(\widehat{c}_1(\bxi),\z)=I_\bzeta\left(\frac{T^{-1}(\widehat{c}_1(\bzeta),\z)}{\n{T^{-1}\mathbf{e}_1}}\right)=I_\bzeta\left(\frac{\n{T^{-1}(\widehat{c}_1(\bzeta),\z)}}{\n{T^{-1}\mathbf{e}_1}}\right)=I_\bzeta(\widehat{c}_1(\bzeta),\z),$$
proving \eqref{eq:ts1}. Denote the radial component of $I_\bzeta$ by $f$. By applying the multivariate chain rule, we have
\begin{align*}
    \partial_{x_1}I_\bxi(\widehat{c}_1(\bxi),\z)&=\nabla I_\bzeta(T^{-1}(\widehat{c}_1(\bxi),\z)) \cdot \partial_{x_1}T^{-1}(\widehat{c}_1(\bxi),\z)\\
    &=f'\left(\n{T^{-1}(\widehat{c}_1(\bxi),\z)}\right)\frac{T^{-1}(\widehat{c}_1(\bxi),\z)}{\n{T^{-1}(\widehat{c}_1(\bxi),\z)}}\cdot T^{-1}\mathbf{e}_1\\
    &=f'\left(\widehat{c}_1(\bzeta)\right)\frac{\widehat{c}_1(\bxi)\n{T^{-1}\mathbf{e}_1}^2}{\widehat{c}_1(\bzeta)}\\
    &=\partial_{x_1}I_\bzeta(\widehat{c}_1(\bzeta),\z)\n{T^{-1}\mathbf{e}_1}.
\end{align*}
Combined with \eqref{eq:ts1}, this establishes \eqref{eq:ts2}.
\end{proof}

Condition (i) handles the case where the law of $\bxi$ is given by a product measure. 
The condition (ii) above is satisfied, e.g., for centered non-degenerate elliptical distributions. In particular, this applies to non-degenerate Gaussian distributions, which we explicitly compute in the following example.

\begin{example}
    Suppose that $\bxi$ is centered Gaussian with an invertible covariance matrix $\Sigma=AA^{\top}$. In this case, $\bxi=A\bzeta$ where $\bzeta$ is standard Gaussian. In this case, $I_\bxi(\bx)=\bx^{\top}\Sigma^{-1}\bx/2$, so that $\widehat{c}_1=\sqrt{2\log\rho/(\Sigma^{-1})_{11}}$ and $\partial_{x_1}I(\widehat{c}_1,\z)=\sqrt{2(\log\rho)(\Sigma^{-1})_{11}}$. By Theorem \ref{thm:main_asymmetric_better}, we have
    $$\tau_x=\frac{x}{\sqrt{2\log\rho/(\Sigma^{-1})_{11}}}+\frac{d+2}{4\log\rho}\,\log x+O_\p(\log\log x).$$
    One can check indeed that this also follows from \eqref{eq:tauxlong1}, using $\n{A^{-1}\bx}=x\sqrt{(\Sigma^{-1})_{11}}$ where $\bx=(x,\z)$.
\end{example}

\section{Numerical analysis}
\label{sec:numerical}

In this section, we test Theorem \ref{thm:main long} numerically. Since tracking the locations of all particles in a BRW is time-consuming and memory-intensive, we introduce a path-purging algorithm that removes particles that are highly unlikely to have descendants that realize the first passage time.\footnote{An efficient algorithm that simulates the tip of one-dimensional BBM has been proposed in \citep{brunet2020generate}. Unfortunately, the arguments therein do not seem to carry over to general BRWs in higher dimensions.} A preliminary version of the algorithm for spherically symmetric jumps can be found in Section 2.3 of \citep{zhang2024modeling}.

Recall that $I$ denotes the large deviation rate function of the jump distribution $\bxi$. In the path-purging algorithm, we purge along the normal vector 
\begin{align}
    \mathbf{n}=\nabla I(\widehat{c}_1,\z).\label{eq:n}
\end{align}
That is, we retain particles that have the $q_c/2$ largest inner product with $\mathbf{{n}}$, where $q_c$ is a prefixed large number. To intuitively justify this, suppose that a particle is found at $\bx'=(x',\z)$ where $x'<x$. The expected time for the BRW initiated at $\bx'$ to arrive at $B_x$ is around $(x-x')/\widehat{c}_1$. In order for another particle located at $\bx'+\bv$ to reach $B_x$ within roughly the same amount of time, we need
$$e^{-(x-x')I(\widehat{c}_1,\z)/\widehat{c}_1}=e^{-(x-x')I(\widehat{c}_1(\bx-\bx'-\bv)/(x-x'))/\widehat{c}_1}.$$
For small $\bv$, this holds if and only if $\bv\cdot \nabla I(\widehat{c}_1,\z)=0$, independent of the choice of $x'$.

If $\bxi$ is spherically symmetric, or satisfies condition (i) of Theorem \ref{thm:main_asymmetric_better}, the normal vector $\mathbf{n}$ is then parallel to the direction towards the termination site $\bx=(x,\z)$.


  In our implementation, we take $q_c=9000$ and consider BRW where particles only branch into three (i.e., $\{i>1:p_i>0\}=\{3\}$) in dimension $d=3$. At times, we also consider the delayed branching model from Section \ref{sec:newbrwproof}. We compute the first passage times for offspring distributions with $p_3\in \{0.05\ell:1\leq\ell\leq 19\}$ and $p_1=1-p_3$ (meaning $p_0=0$, unless otherwise stated) while varying $x$ and the distribution of $\bxi$. We showcase the following two cases:
  \begin{itemize}
      \item In Section \ref{sec:numerical brw}, we discuss the case where $\bxi$ is uniformly distributed on $\S^2$.
      \item In Section \ref{sec:numerical gbrw}, we deal with Gaussian distributions with a general covariance matrix.
  \end{itemize}

Finally, we note that the main result of \citep{gantert2011asymptotics} shows that there rarely exists an infinite path in a one-dimensional BRW that always stays above the curve $n\mapsto (c_1-\ee)n$ for $\ee>0$ small. While this suggests that our path purging algorithm cannot be backed up by theory for a fixed $q_c$ as $x\to\infty$, the simulations work effectively well in the regime where $q_c$ is quite large compared to $x$.


 \subsection{Uniform distribution on the sphere}\label{sec:numerical brw}

 Consider the case when $\bxi$ is uniformly distributed in $\S^2$, i.e., we are in the spherically symmetric case. 
 The statistic that we compare is the coefficient of the linear term in the estimation of $\tau_x$ in \eqref{eq:tauxasymp}. For any given $p_3\in \{0.05\ell:1\leq\ell\leq 19\}$, in the numerical BRW simulation, we calculate the FPT distribution at different offset distances, $x \in \{0.25L_x,\, 0.5L_x,\, 0.75L_x,\, L_x\}$, where $L_x=65.5$. The mean FPT $\E[\tau_x]$ is then fit to a function of the form
\begin{equation}
    \E[\tau_x] =  \frac{x}{c_1} + B \log x + C. \label{eq:fit_tau_approx}
\end{equation}

%
\begin{figure}[t]
    \centering
        \subfigure[]{\includegraphics[width=0.45\textwidth]{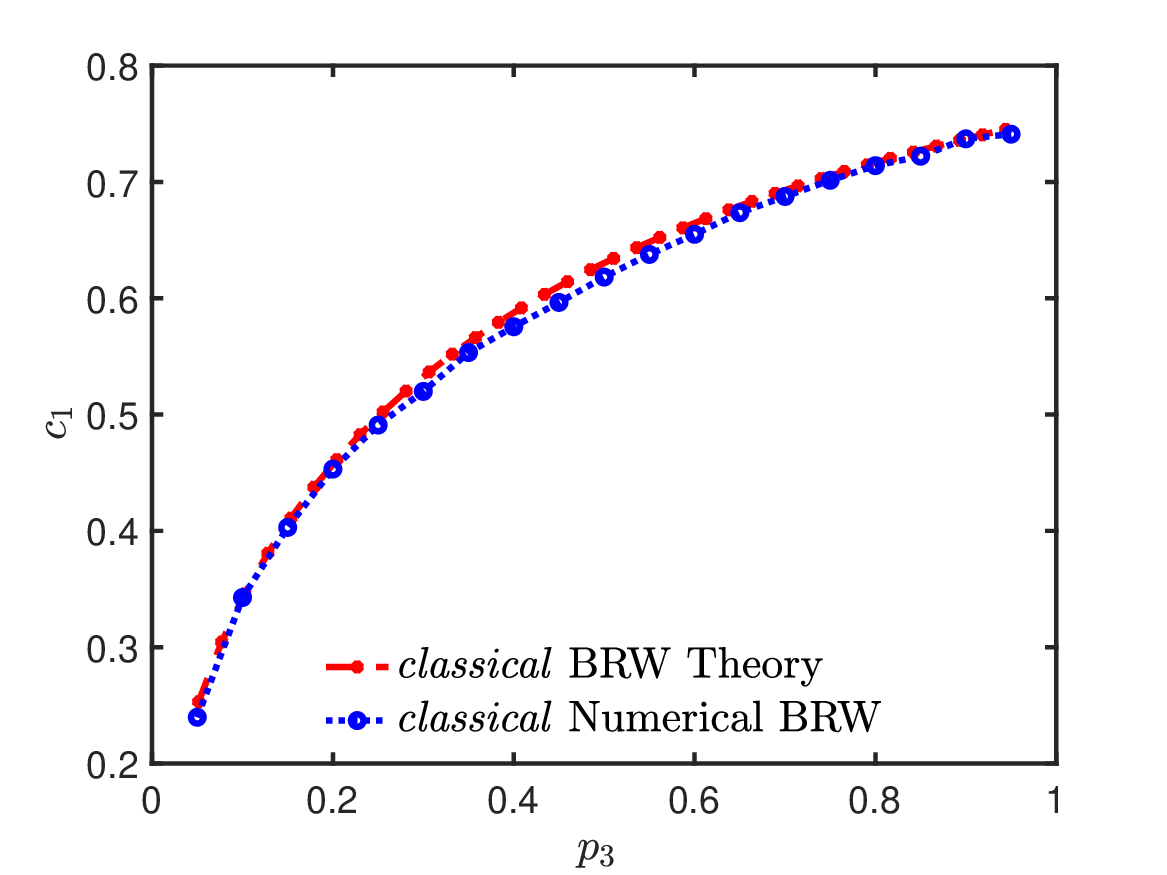}}
         \subfigure[]{\includegraphics[width=0.45\textwidth]{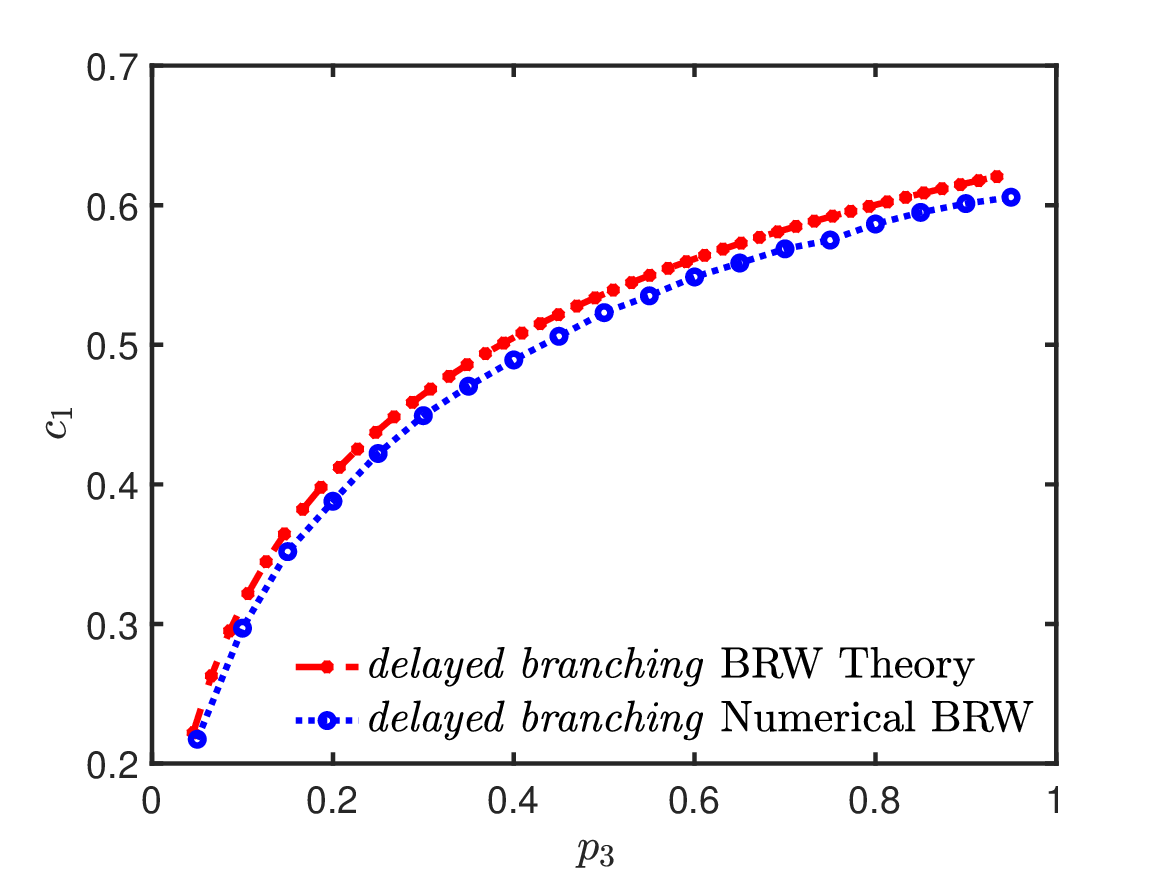}}
    \caption{The numerical 3D BRW obtained $c_1$ compared against the reference calculations across different $p_3$ values for the  (a) \rrm~and (b) {delayed branching} BRW with $p_0=0.004$. The mean of the FPT at each $x$ is computed from $2000$ samples.
    }
    \label{fig:c1_rho_diff_models}
\end{figure}
The numerical implementation validates the theoretical predictions to the classical and delayed branching BRW models, as shown in Figure~\ref{fig:c1_rho_diff_models}.
%
This also indicates that the numerical approximation of {path purging} does not affect the linear coefficient of the scaling behavior of the FPT distribution, and can be used as a suitable approximation for carrying out the numerical BRW simulations. The error between the theoretical and numerical implementation can be attributed to the limited range of $x$ over which the fit of \eqref{eq:fit_tau_approx} is executed. Figure~\ref{fig:c1_rho_diff_models} suggests that \emph{classical} model requires a smaller range of $x$ for the FPT evaluation for a higher fidelity estimation of $c_1$ in comparison to the \emph{delayed branching} model.
\begin{figure}[ht!]
    \centering
        \subfigure[]{\includegraphics[width=0.45\textwidth]{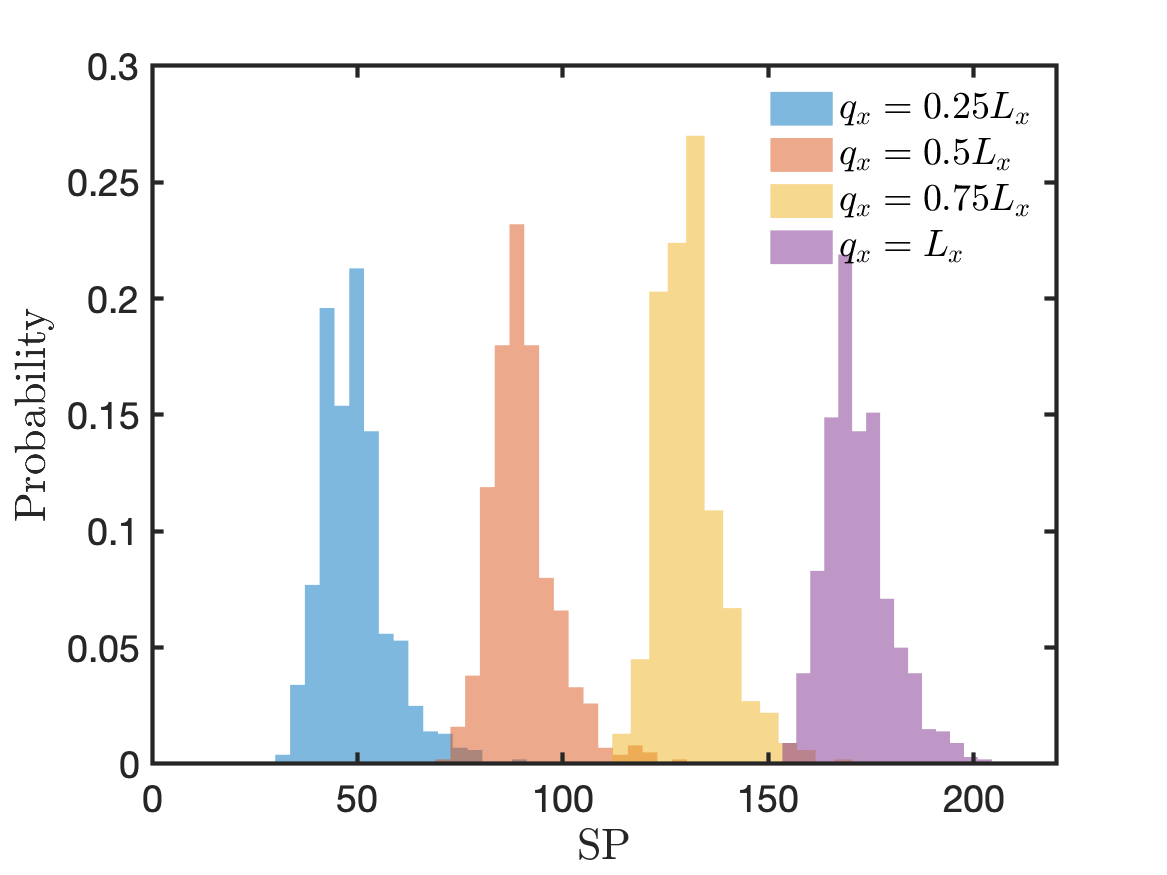}\label{fig:sig_valid_a}}
        \subfigure[]{\includegraphics[width=0.45\textwidth]{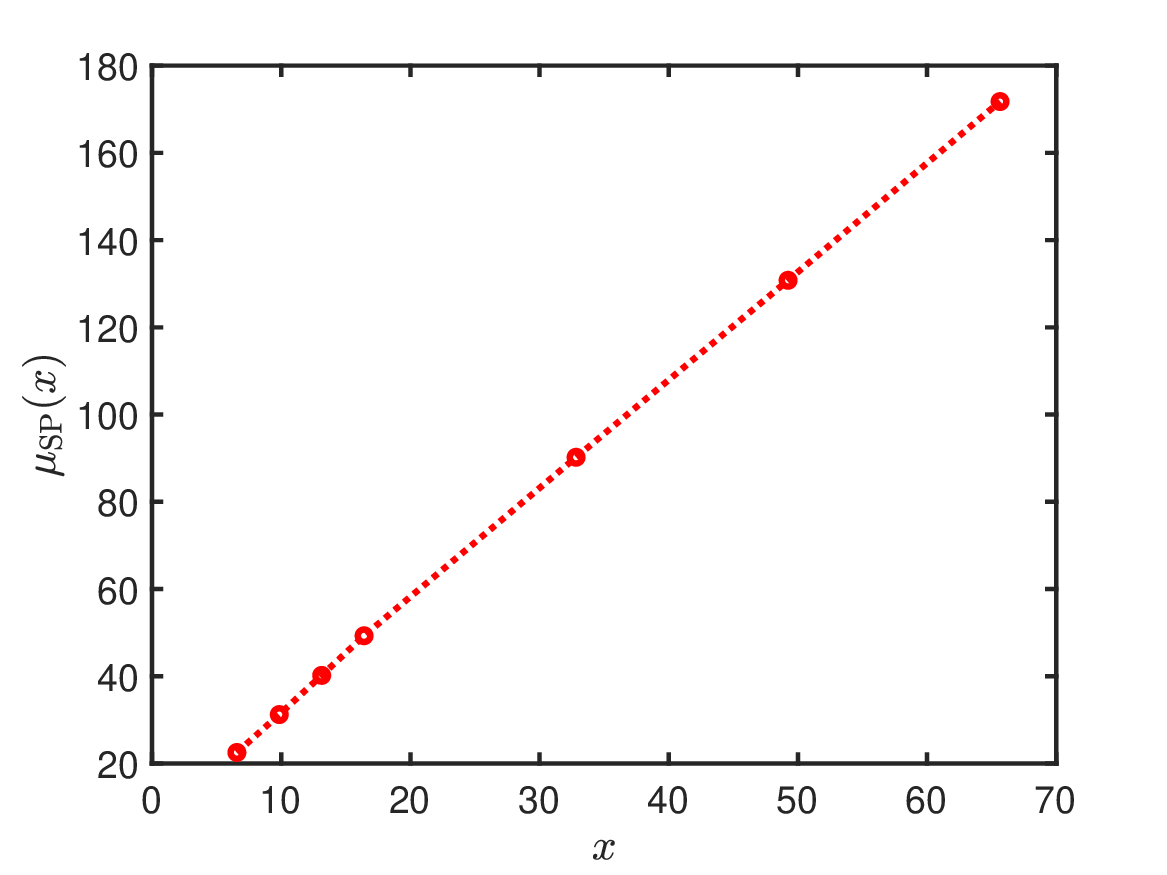}\label{fig:cgmd linear}}
        \subfigure[]{\includegraphics[width=0.45\textwidth]{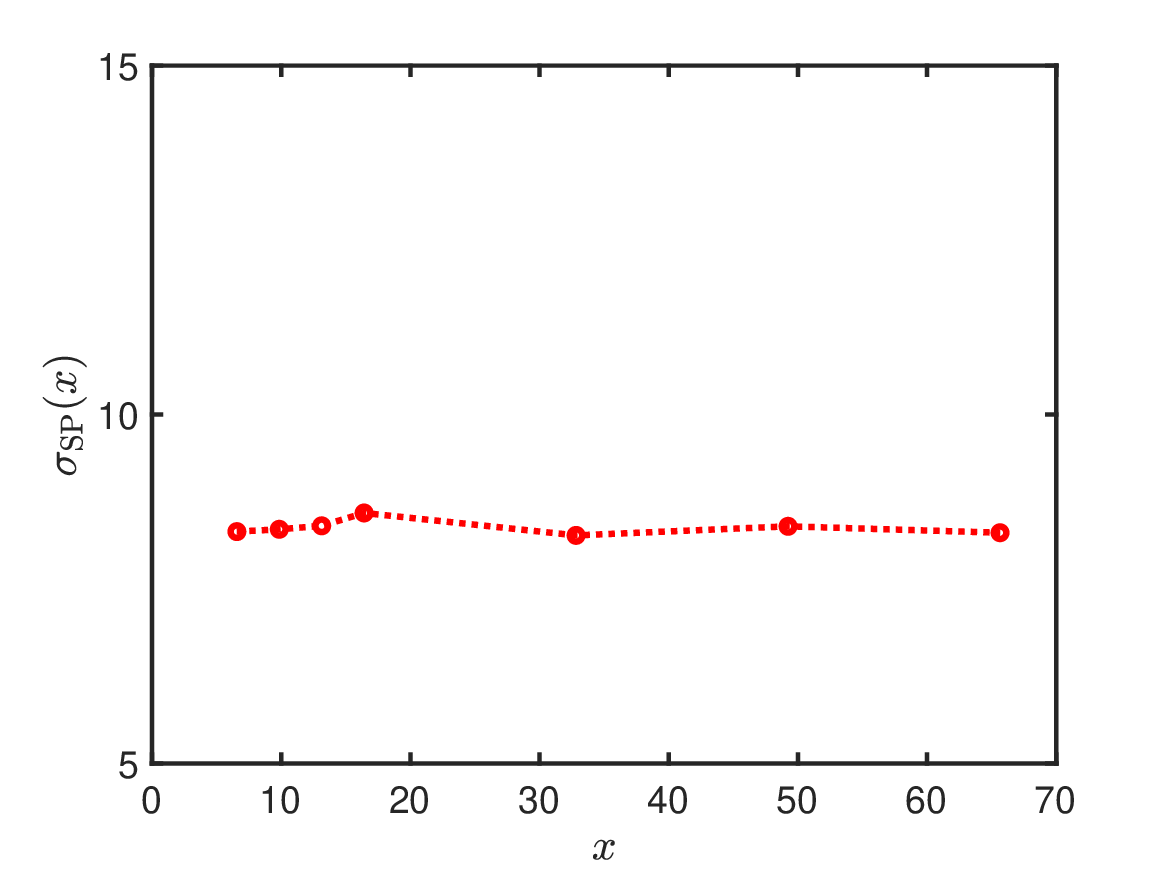}\label{fig:sig_valid_b}}
       \hspace{-1cm}
    \caption{(a) The FPT distributions at different values of $x$ for the \rrm. The (b) mean and (c) standard deviation of the SP as a function of $x$, respectively. The data points correspond to $x = 0.1L_x,\,0.15L_x,\,0.2L_x,\,0.25L_x,\,0.5L_x,\,0.75L_x,\,L_x$ and $p_3=0.0856$ for the \rrm, where $L_x=65.5$. The mean of the FPT at each $x$ is computed from $500$ samples.
    }
    \label{fig:sig_valid}
\end{figure}
Figure~\ref{fig:sig_valid_a} shows the FPT distribution at different $x$. Figure~\ref{fig:cgmd linear} shows the linear increase in the mean FPT (negligible logarithmic effect), whereas Figure~\ref{fig:sig_valid_b} validates the asymptotic in \eqref{eq:tauxasymp2} by showing a negligible change in the standard deviation of the FPT distribution with increasing offset distance, $x$.

\subsection{Gaussian distributions}
\label{sec:numerical gbrw}
Suppose now that the jump $\bxi$ is centered Gaussian with a positive definite covariance $\Sigma$. We have $\Lambda(\bla)=\bla^\top \Sigma\bla/2$ and $I(\bx)=\bx^\top \Sigma^{-1}\bx/2$. In particular, $\bc_2=\nabla I(x/n,\z)=((\Sigma^{-1})_{11}x/n,\dots,(\Sigma^{-1})_{1d}x/n)$ and $\Lambda(\bc_2)=(x/n)^2(\Sigma^{-1})_{11}/2$. 
By \eqref{eq:n}, the normal vector $\bn$ is a constant multiple of $\Sigma^{-1}\mathbf{e}_1$, where $\mathbf{e}_1=(1,\z)\in\R^d$. If $\Sigma$ is diagonal (independent jumps), the normal vector $\bn$ is then parallel to the direction of the termination site $\bx$. This is the case in our previous work~\citep{zhang2024modeling} as opposed to the general case of non-spherically symmetric and dependent jumps that we will discuss in this section.


 
 We look at three specific examples: (i) symmetric and independent (S.I.) jumps (presented earlier~\citep{zhang2024modeling}), (ii) non-symmetric and independent (N-S.I.) jumps, and (iii) non-symmetric and dependent (N-S.D.) jumps. The corresponding covariances for the three cases are given by
 \begin{equation*}
     \Sigma_{\rm S.I.}=
     \begin{bmatrix}
         1 &0 &0 \\
         0&1 &0 \\
         0&0&1
     \end{bmatrix}
,\,\,\,\,\,\,
     \Sigma_{\rm N\textrm{-}S.I.}=
     \begin{bmatrix}
         1&0 &0 \\
         0&1.5 &0 \\
         0&0&0.5
     \end{bmatrix}
,\,\,\,\,\,\,
     \Sigma_{\rm N\textrm{-}S.D.}=
     \begin{bmatrix}
         1&0.5 &0.25 \\
         0.5&1.5 &0.5 \\
         0.25&0.5&0.5
     \end{bmatrix}
     .
 \end{equation*}
We follow the same procedures as Section \ref{sec:numerical brw} that estimate the statistic $\widehat{c}_1$ in \eqref{eq:tauxasymp long} through fitting \eqref{eq:fit_tau_approx}. Note that at any given $p_3$, the $\widehat{c}_1$ is a function of $(\Sigma^{-1})_{11}$. As a result, we expect the $c_1$ of the Gaussian BRW with S.I.~and N-S.I.~jumps to be identical, as confirmed by our numerical results in Figures~\ref{fig:c1_rho_gbrw_models_a} and~\ref{fig:c1_rho_gbrw_models_b}. We use $p_0=0$ for the comparison between the numerical and theoretical results in this section.
\begin{figure}[t]
    \centering
        \subfigure[]{\includegraphics[width=0.45\textwidth]{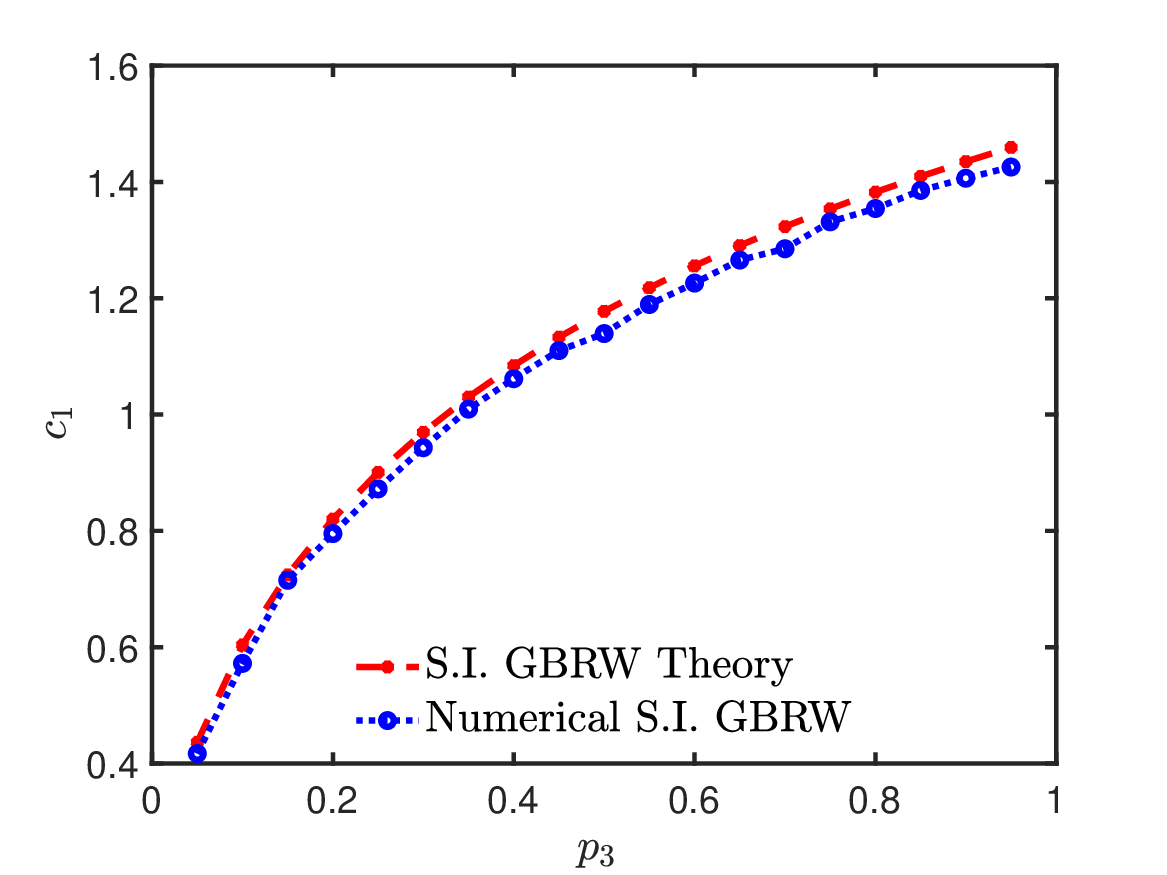}\label{fig:c1_rho_gbrw_models_a}}
        \subfigure[]{\includegraphics[width=0.45\textwidth]{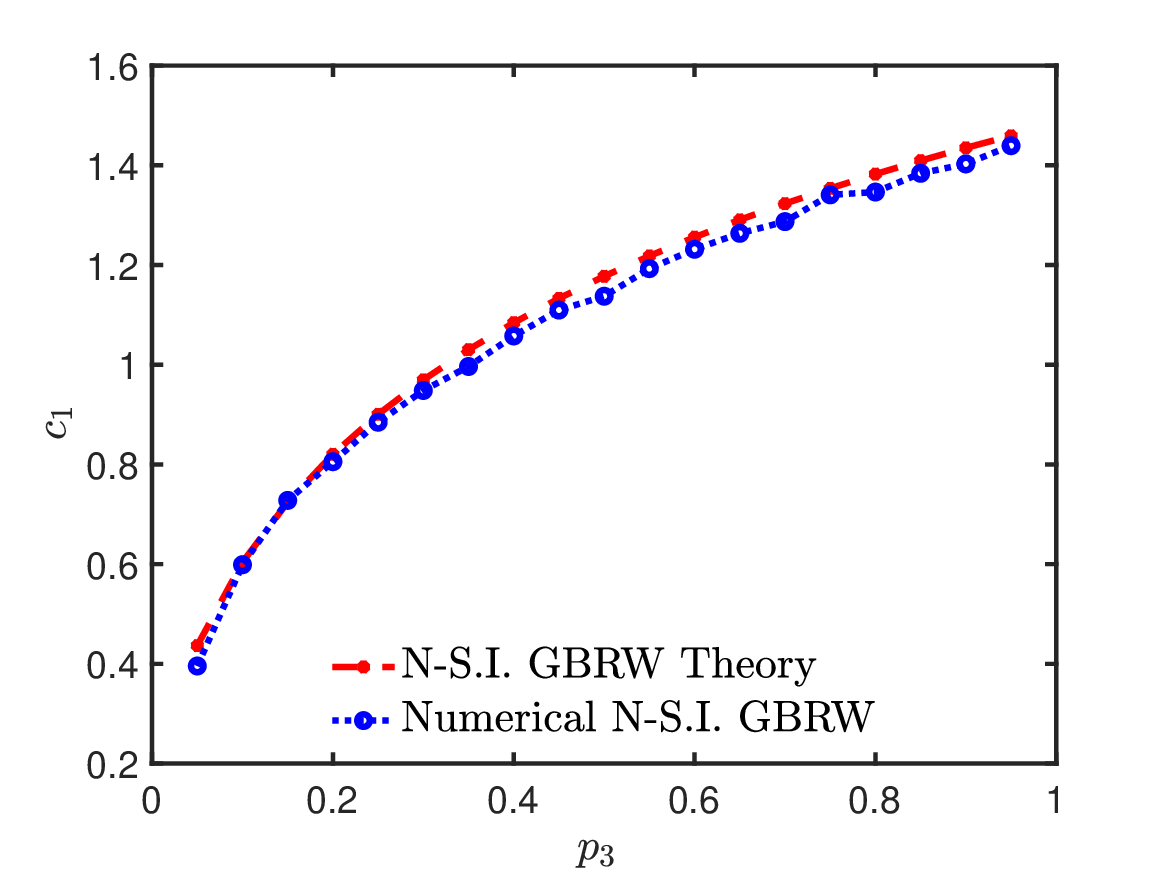}\label{fig:c1_rho_gbrw_models_b}}
        \subfigure[]{\includegraphics[width=0.45\textwidth]{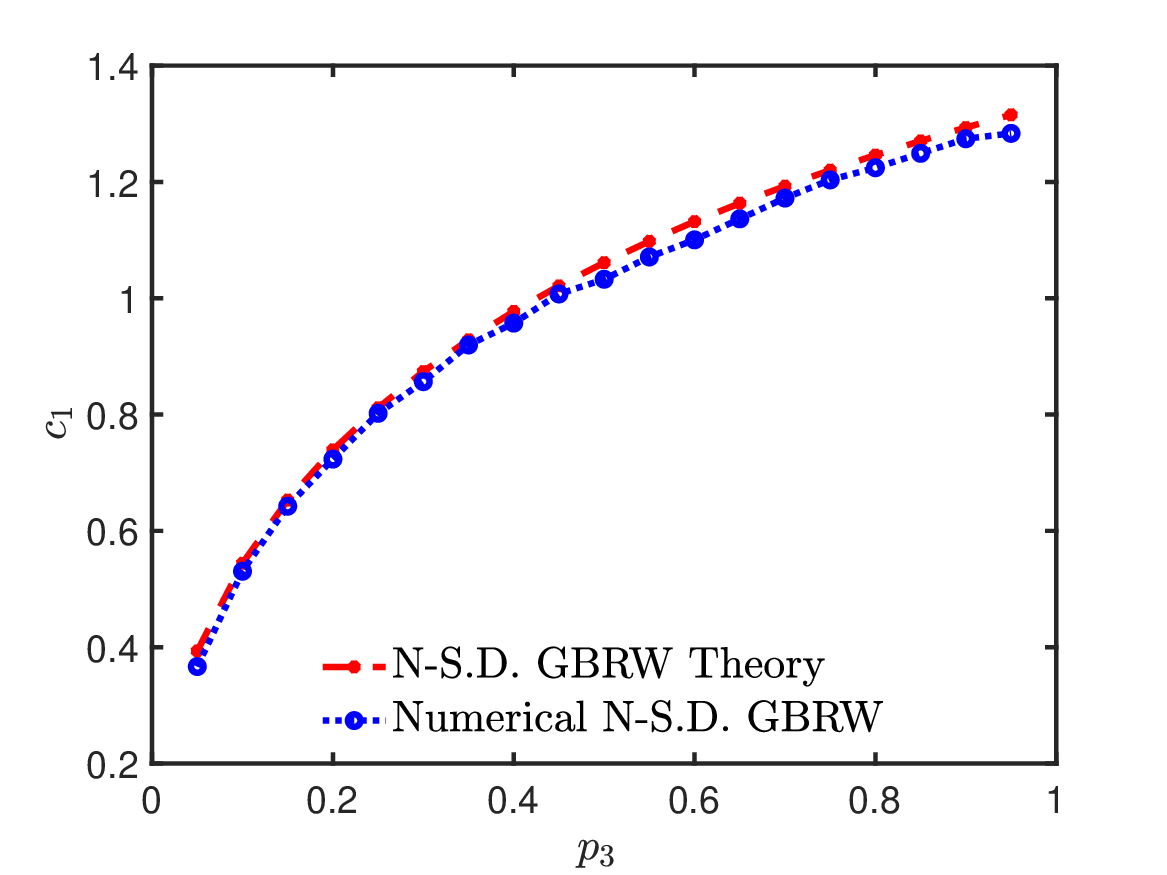}\label{fig:c1_rho_gbrw_models_c}}
    \caption{The numerically obtained $\widehat{c}_1$ compared against the theoretical $\widehat{c}_1$ for Gaussian BRW with  (a) symmetric and independent (S.I.) jumps, (b) non-symmetric and independent (N-S.I.) jumps, and (c) non-symmetric and dependent (N-S.D.) jumps. The mean of the FPT at each $x$ is computed from $10000$ samples.
    }
    \label{fig:c1_rho_gbrw_models}
\end{figure}
In the case of the N-S.D. jumps, the $(\Sigma^{-1})_{11}$ is larger and we expect the $\widehat{c}_1$ to be smaller. Figure~\ref{fig:c1_rho_gbrw_models_c} shows the agreement of the numerical implementation in being able to capture the lower $c_1$ at every $p_3$. The agreement of the numerical results could be improved by incorporating a larger number of paths for the computation of the mean FPT or by increasing the range of $x$ over which the mean FPT is computed to execute the fit in \eqref{eq:fit_tau_approx}. To ascertain that 500 samples are sufficient to capture the mean, we present the mean FPT $\E[\tau_x]$ at different values of $p_3$ at $x=65.5$ for the Gaussian BRW with N-S.I.~and N-S.D.~jumps.
\begin{figure}[t]
    \centering
        \subfigure[]{\includegraphics[width=0.45\textwidth]{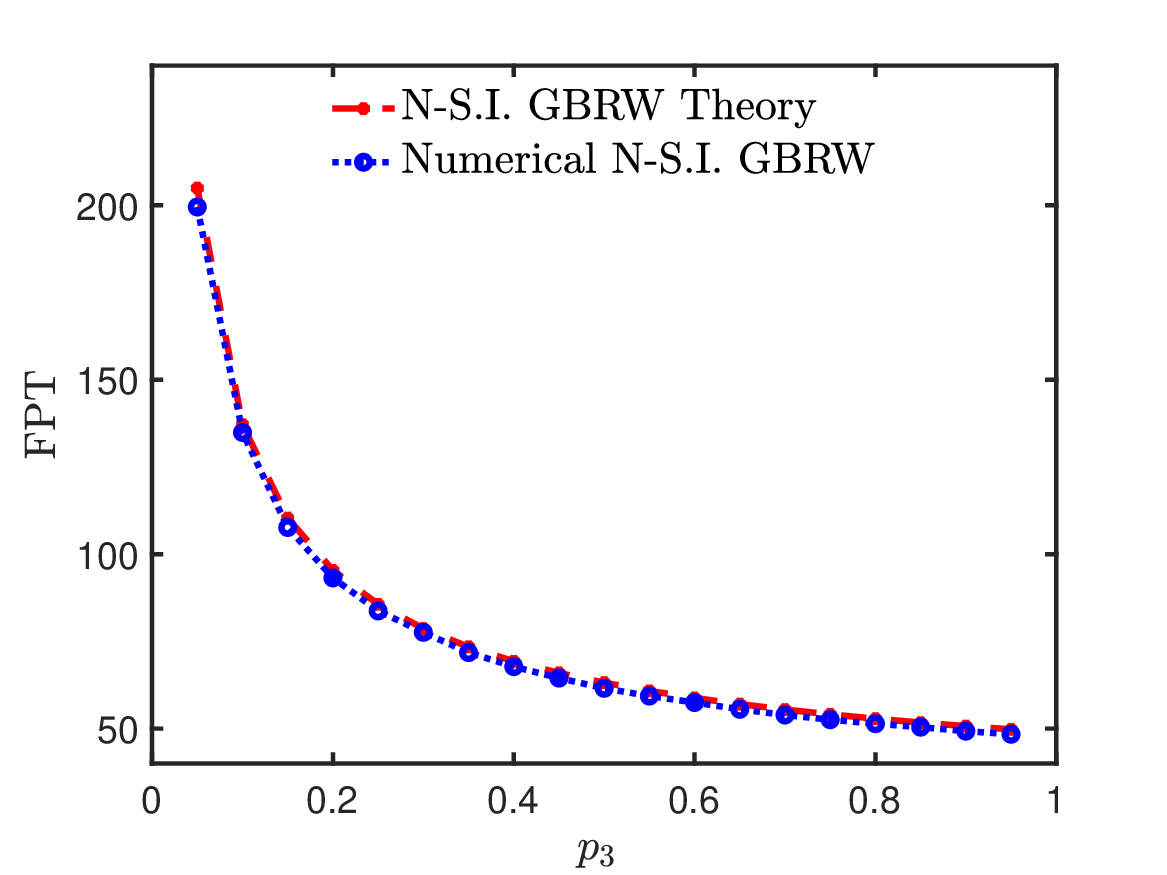}\label{fig:fpt_gbrw_models_a}}
        \subfigure[]{\includegraphics[width=0.45\textwidth]{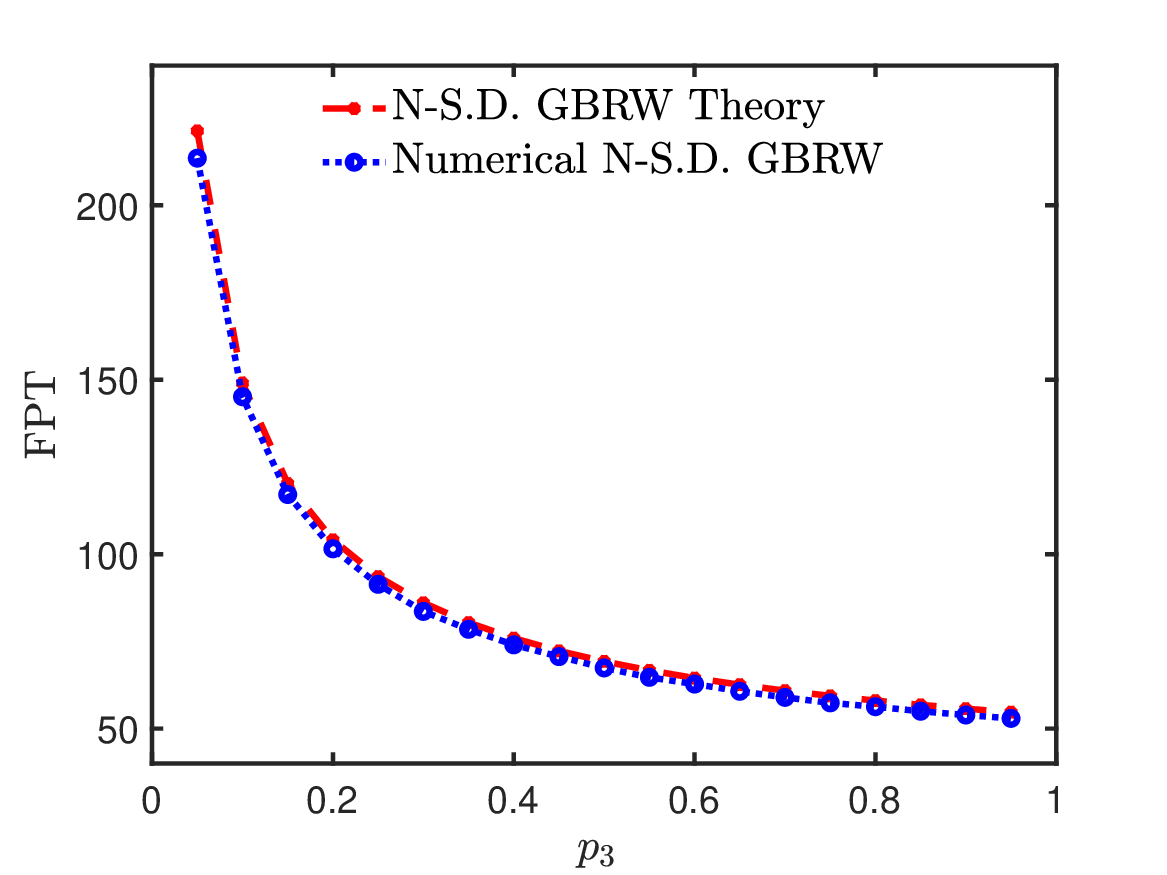}\label{fig:fpt_gbrw_models_b}}
    \caption{The numerically obtained expected FPT compared against the theoretical asymptotic \eqref{eq:tauxasymp long} for Gaussian BRW with (a) non-symmetric and independent (N-S.I.) jumps and (b) non-symmetric and dependent (N-S.D.) jumps. The mean of the FPT is calculated at $x=L_x$, where $L_x=65.5$, from $10000$ samples.
    }
    \label{fig:fpt_gbrw_models}
\end{figure}
Figure~\ref{fig:fpt_gbrw_models} confirms that the statistics from 500 independent samples are sufficient to capture the mean of the distribution. This confirms that the approximation of $c_1$ can be improved by fitting the FPT values over a larger range of $x$. 

\section{Concluding remarks}\label{sec:conclusion}

In this paper, we study the first passage times of an $\R^d$-valued branching random walk (BRW) to a shifted unit ball. The first passage time asymptotics consist of a linear term and a logarithmic correction term, as a function of the Euclidean distance of the target ball from the origin. We discuss extensions of this result to the delayed branching BRW model. As an immediate application, we obtain theoretical predictions of the shortest-path statistics for polymer networks consisting of long chains and random cross-links. We conclude with a conjecture on finer asymptotics of the first passage times. 

The first passage times of the branching Brownian motion, the continuous-time sibling of the BRW, possess better tractability due to its connection to the Fisher-KPP equation. It is proven in Section 4.1 of \citep{zhang2024modeling} the finer asymptotic for the first passage times of a standard branching Brownian motion that
\begin{align}
    \tau_x=\frac{x}{\sqrt{2}}+\frac{d+2}{4}\,\log x+O_\p(1),\label{eq:bbm}
\end{align}
where the $O_\p(1)$ is tight. We refer the readers to \citep{zhang2024modeling} for the proof and a detailed discussion.
In view also of the tightness result of Theorem \ref{theorem:concentration}, it is natural to postulate the following conjecture for BRW. Recall the constant $\widehat{c}_1$ from Section \ref{sec:main results}.

\begin{conjecture}\label{conjecture}
    Assume (A1)--(A4), or (A1), (A2),  (A5), and (A6). Conditioned upon survival, the first passage time for BRW in dimension $d\geq 2$ to $B_x$ is given by
$$\tau_x=A(x)+O_\p(1)=\frac{x}{\widehat{c}_1}+\frac{d+2}{2\,\widehat{c}_1\partial_{x_1}I(\widehat{c}_1,\z)}\,\log x+O_\p(1),$$where the $O_\p(1)$ is tight. 
\end{conjecture}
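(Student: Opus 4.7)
The plan is to push the asymptotic from $O_\p(\log\log x)$ down to $O_\p(1)$ by sharpening the single ingredient that limits precision, namely the lower bound in Proposition \ref{prop:number of particles}. As noted in Remark \ref{remark:number of particles}, the current proof via Paley--Zygmund only yields a uniformly positive probability in \eqref{eq:lb number}, and this $1/L$ is precisely what forces the $\log\log x$ correction through the event \eqref{eq:Wnlb}: to compensate for the constant-probability loss, one has to shift the frontier window backwards by $(\log\log x)/\bl$ in the definition of $\alpha(x)$. Correspondingly, the first step is to upgrade \eqref{eq:lb number} to a high-probability statement of the form
$$\p\left(\#\{v \in V_n : \eta_{v,n}(n) \geq m_n - x\} \geq \varepsilon\, x\, e^{\bl x}\right) \geq 1 - \delta(\varepsilon),$$
where $\delta(\varepsilon) \to 0$ as $\varepsilon \to 0$, uniformly for $n$ large and $x \in [2,\sqrt{n}]$. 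The natural tool here is the convergence of the shifted extremal point process $\sum_{v \in V_n} \delta_{\eta_{v,n}(n) - m_n}$ to a decorated Poisson point process with intensity proportional to $e^{\bl y}\,dy$ on $(-\infty,0)$, established for one-dimensional non-lattice BRW by \citep{aidekon2013convergence} and \citep{madaule2017convergence}. The near-frontier particle count is a continuous functional of this limit, so its tightness yields the required high-probability lower bound on the count.

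Next, for the upper bound on $\tau_x$, I would rerun the argument of Section \ref{sec:proof} with $\alpha(x) := \frac{d-1}{2\bl}\log x + K$, where $K$ is a large constant (in place of the current $\alpha$ that carries a $-(1/\bl)\log\log x$ term). With the strengthened count above, \eqref{eq:Wnlb} holds with probability $> 1 - \delta$ for a suitably large $L_1$. The remainder of the argument—evolving $L_1\, x^{(d-1)/2}$ independent subtrees rooted in $W_{(\log x)^2}$ for the remaining $O((\log x)^2)$ time and invoking Proposition \ref{prop:ulb gaussian approx}(ii) with the main result of \citep{madaule2017convergence}—then yields $\p(\tau_x > A(x) + K) < \varepsilon + \delta$, giving the tightness on the upper side. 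For the lower bound, the existing proof of \eqref{eq:toprove2} already gives exponential tails in $a(x)$ once $a(x)\to\infty$, so it suffices to replace $a(x)$ by a large constant $K$ and to verify that Lemma \ref{lemma:beyondcurve} together with the refined count at the generic earlier times $A(x)-j$ controls the contribution uniformly; combining with Theorem \ref{theorem:concentration} closes the two-sided $O_\p(1)$ bound.

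The main obstacle will be the non-spherically symmetric case, where the analogue of the one-dimensional extremal convergence has to be developed along the direction $\bc_2$, jointly with the transverse $(d-1)$-dimensional displacement. Concretely, one needs a joint limit theorem asserting that, conditioned on being near the frontier in the $\bc_2$-direction, the transverse components are approximately Gaussian at scale $\sqrt{x}$, together with Poissonian fluctuations of the longitudinal count. A further subtlety is that in the non-spherically symmetric setting, the tilted measure \eqref{eq:dqdp long} does not center the transverse coordinates, so an additional deterministic drift has to be absorbed into the shifted cone before applying Lemma \ref{lemma:rw in cones} at the sharp scale. I also expect that a delicate treatment of the endpoint regime $s \approx n$ in the second-moment bound for $\widehat{\Delta}_{n,x}$ (the analogue of \eqref{eq:second moment last step}) will be required to avoid losing an additional logarithmic factor; this is where the precise value $p_{\alpha,d} = 1$ of the cone exponent becomes essential, and fixing $\alpha = \pi/2$ rather than taking $\alpha$ slightly smaller (as in Section \ref{sec:ub of thm main long}) seems to be the right route, at the cost of a more intricate ballot-style estimate than Lemma \ref{lemma:newballot}.
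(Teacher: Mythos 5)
This statement is Conjecture~\ref{conjecture}, which the paper explicitly leaves open; there is no proof in the paper to compare against, so the question is whether your proposed argument would succeed. It would not, and the problem starts with your diagnosis of where the $\log\log x$ in Theorem~\ref{thm:main} comes from. You claim it arises from the constant-probability loss ($1/L$) in the Paley--Zygmund step \eqref{eq:lb number}, and that shifting the window is needed ``to compensate'' for that loss. In fact the $1/L$ is already handled separately by Theorem~\ref{theorem:concentration} (tightness of $\tau_x$ upgrades a uniformly positive probability to a tightness statement), so sharpening \eqref{eq:lb number} to a high-probability statement via the decorated-Poisson limit of Madaule buys you nothing. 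The real source of the $\log\log x$ is the two-scale decomposition: the seeding layer is placed at time $n_1=(\log x)^2$, and the frontier there is $m_{n_1}=c_1 n_1-\tfrac{3}{\bl}\log\log x$. This backward correction is intrinsic to the BRW at time $n_1$, and when you then evolve the subtrees for the remaining time $T\asymp x/c_1$ they contribute their own $-\tfrac{3}{2\bl}\log T$ correction. Adding these two corrections and optimizing over the admissible window width $\alpha$ subject to the count constraint $\alpha e^{\bl\alpha}\gg x^{(d-1)/2}$ and the proposition's range constraint $\alpha\le\sqrt{n_1}$ gives total time $A(x)+\tfrac{2}{\bl c_1}\log\log x+O(1)$, exactly $A_1(x)$. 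The $\log\log x$ is unavoidable by this route: $\alpha\gtrsim\log x$ forces $n_1\gtrsim(\log x)^2$, hence $\log n_1\gtrsim 2\log\log x$, and no relaxation of the exponent $\sqrt{n}$ in Proposition~\ref{prop:number of particles} to $n^{1-\delta}$ can push $\log n_1$ below order $\log\log x$.

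Your concrete fix also fails arithmetically. With $\alpha(x)=\tfrac{d-1}{2\bl}\log x+K$ and $A_1(x)$ replaced by $A(x)+K'$, the lowest particle in $W_{(\log x)^2}$ sits at height $m_{(\log x)^2}-\alpha(x)$, and the maximum of its descendant subtree after the remaining time $T=A(x)+K'-(\log x)^2$ reaches
\begin{equation*}
m_{(\log x)^2}-\alpha(x)+m_T = x - \frac{3}{\bl}\log\log x - K + c_1 K' + O(1),
\end{equation*}
which is short of $x$ by order $\log\log x$ for any fixed $K,K'$. Restricting to the higher particles in the window does not help either: to keep the count $\gg x^{(d-1)/2}$ you cannot cut the window narrower than $\tfrac{d-1}{2\bl}\log x-\tfrac{1}{\bl}\log\log x$, which is precisely the paper's choice and again forces $A_1(x)=A(x)+\tfrac{2}{\bl c_1}\log\log x$. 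For the non-spherically symmetric part, you have correctly identified that the cone exponent $p_{\alpha,d}$ must be driven exactly to $1$ (half-space), but ``a more intricate ballot-style estimate than Lemma~\ref{lemma:newballot}'' understates what is needed. The log-shifted cone in Lemma~\ref{lemma:newballot} already costs a $(\log n)^{p}$ factor, which fed through the first-moment bound on $\E[M_x]$ only gives another $\log\log x$ correction, not $O_\p(1)$; the paper's own concluding remarks indicate that genuinely new estimates for random walks in time-dependent cones $\K_n\to\K_{\pi/2}(-\bc_2)$ (with a controlled rate) would be required, and your proposal does not supply these.
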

Conjecture \ref{conjecture} is also supported by our numerical simulations; see Figure \ref{fig:fpt_gbrw_models}.

The main technical obstruction to improving upon $o_\p(\log x)$ lies in the application of ballot theorems---the cone $\K$ in Sections \ref{sec:ub of thm main long} and \ref{sec:lb of thm main long} cannot be simply replaced by a half-space as certain sums would not converge (if this was the case, Conjecture \ref{conjecture} would hold). One possible approach is to develop bounds for random walks in time-dependent (and possibly non-circular) cones, where $\K=\K_n$ may depend on $n$, the number of steps in the random walk. The $o_\p(\log x)$ may be improved if one applies such estimates in the case where $\K_n\to \K_{\pi/2}(-\bc_2)$ with a certain rate. 

 \begin{remark}[Status of Conjecture~\ref{conjecture}]\label{rem:conj}
After the first version of this paper was posted, Conjecture~\ref{conjecture} was proved in \citep{blanchet2024tightness}. 
We retain the conjecture here for historical context and refer the readers to Theorems 1 and 2 of \citep{blanchet2024tightness}. Nevertheless, a stronger form of Conjecture \ref{conjecture} --- that the $O_\p(1)$ term converges in law --- remains unsolved.
\end{remark}

\begin{acks}
    We thank Amir Dembo, Haotian Gu, and Alexandra Stavrianidi for helpful discussions.  We also thank two referees for their detailed comments that significantly improved the presentation of this paper.
\end{acks}

\begin{funding}
    The material in this paper is based upon work supported by the Air Force Office of Scientific Research under award number FA9550-20-1-0397. Additional support is gratefully acknowledged from NSF 1915967, 2118199, 2229012, 2312204.
\end{funding}

\section*{Data Availability}The open source code for the branching random walk calculations can be accessed from \href{https://gitlab.com/micronano_public/PolyBranchX}{PolyBranchX}.

\begin{appendix}
\section*{Large deviation estimates}
In this appendix, we collect the deferred proofs of the large deviation estimates (Proposition~\ref{prop:ulb gaussian approx} in Section~\ref{sec:rw}, Lemma~\ref{lemma:weak uniform LD} in Section~\ref{sec:concentration}, and Proposition~\ref{prop:ulb gaussian approx long} in Section~\ref{242}). We first quote a result on a quantitative version of the central limit theorem for non-lattice distributions.

\begin{lemma}\label{lemma:edgeworthok}
Consider $d\geq 2$ and a compact set $K\subseteq \R^d$.    Suppose that the radial component $R$ of the spherically symmetric random variable $\bxi$ has exponential moments and $\p(R=0)<1$. 
    Let $\bla\in\R^d$ be such that $\E_\p[e^{\bla\cdot\bX}]$ exists and define a probability measure $\q$ by $\d\q/\d\p(\bx):=e^{\bla\cdot\bx}/\E_{\p}[e^{\bla\cdot\bX}]$ where $\p$ is the law of $\bX$.  
    Then the law of $\bxi$ under $\q$ satisfies the strong non-lattice property
    \begin{align}
        \limsup_{|\bt|\to\infty}|\E_\q[e^{i\bt\cdot \bxi}]|<1.\label{eq:nonlattice}
    \end{align}
    Moreover, consider an i.i.d.~sequence $\{\bX_n\}_{n\geq 1}$ with the same distribution as $\bxi$ under $\q$ with a positive definite covariance matrix $V$ and suppose that $\E_\q[\n{\bxi}^{d+2}]<\infty$. Then
    \begin{align}
        \sup_{B}\bigg|\p\Big(\frac{1}{\sqrt{n}}\sum_{j=1}^n(\bX_j-\E_\q[\bX_j])\in B\Big)-h(B)\bigg|=o(n^{-d/2}),\label{eq:edgeworth}
    \end{align}
    where the supremum is taken over all boxes $B\subseteq K$, $h(B)=(1+o(1))\mu_{\z,V}(B)$ as $n\to\infty$, and $\mu_{\z,V}$ is the Gaussian measure on $\R^d$ with mean $\z$ and covariance $V$.
\end{lemma}

\begin{proof}
Let us first assume that $\p(R=0)=0$. 
Denote by $\bXi=(\Xi_1,\dots,\Xi_d)$ the uniform distribution on $\mathbb S^{d-1}$. Suppose that $\bxi_1=R_1\bXi^{(1)}$ and $\bxi_2=R_2\bXi^{(2)}$ are i.i.d.~copies of $\bxi$, and $\bzeta=\bxi_1+\bxi_2$; since each $\bxi_i$ is radially symmetric, $R_i$ and $\bXi^{(i)}$ are independent. We first find the conditional density $z_{R_1,R_2}(r,\bth)$ of $\bzeta$ under $\p$ given $R_1$ and $R_2$, where $r$ and $\bth$ are the radial and polar components. Observe that the conditional distribution of $\bzeta$ given $R_1$ and $R_2$ is spherically symmetric. We have conditionally,
\begin{align*}
    \p(\n{\bzeta}\geq r\mid R_1,R_2)&=\p(\n{(R_1,\z)+R_2\bXi}\geq r)\\
    &=\p\left(R_1^2+2R_1R_2\Xi_1+R_2^2\geq r^2\right)=\p\left(\Xi_1\geq \frac{r^2-R_1^2-R_2^2}{2R_1R_2}\right).
\end{align*}
By Theorem 2.10 of \citep{fang2018symmetric}, for $y\in[-1,1]$ and some $C(d)\geq 0$, 
$$\p\left(\Xi_1\geq y\right)=C(d)\int_{-1}^y(1-t^2)^{(d-3)/2}\d t.$$
Altogether it follows that for some $C(d)>0$,
$$z_{R_1,R_2}(r,\bth)=C(d)r\left(1-\left( \frac{r^2-R_1^2-R_2^2}{2R_1R_2}\right)^2\right)^{\frac{d-3}{2}},~r\in[|R_1-R_2|,R_1+R_2],~\bth\in\mathbb S^{d-1}.$$
Therefore, the conditional law of  $\bzeta$ under $\p$, and hence also the unconditional law under $\q$, has a density in $\R^d$.
If $\p(R=0)>0$, the laws have densities except for an atom at zero, and the densities are non-trivial since $\p(R=0)<1$.  
By Lemma 4 in Section XV.4 of \citep{feller1991introduction}, \eqref{eq:nonlattice} holds with $\bzeta$ in place of $\bxi$, and hence \eqref{eq:nonlattice} holds since $\bzeta$ is a sum of two i.i.d.~copies of $\bxi$. The final claim follows from Theorem 2(b) of \citep{bhattacharya1978validity}, by noting that the sum in (1.14) therein is $o(1)$ as $n\to\infty$ uniformly on compact sets.
\end{proof}

\begin{proof}[Proof of Proposition \ref{prop:ulb gaussian approx}]
Let $\Lambda(\lambda)=\log\phi_{\xi^{(1)}}(\lambda)=\log\E[e^{\lambda{\xi^{(1)}}}]$. It is clear from (A2) that ${\xi^{(1)}}$ is non-lattice. The Bahadur--Rao theorem (Theorem 3.7.4 of \citep{dembo2009large}) then implies that uniformly in $c\in[-C\log n,C\log n]$,
\begin{align}
    \p(F_n\geq m_n+c)\asymp \frac{e^{-nI((m_n+c)/n)}}{\sqrt{n}}\asymp\frac{e^{-nI(c_1)-(c-3\log n/2)I'(c_1)}}{\sqrt{n}}=n\rho^{-n}e^{-\bl c}.\label{eq:BR}
\end{align}
By definition $\bl=I'(c_1)$, the constant  $\bl>0$ is such that the measure $\q$ defined by 
\begin{align}
    \frac{\d\q}{\d\p}(x):=e^{\bl x-\Lambda(\bl)}\label{eq:dq/dp}
\end{align}satisfies that under $\q$, $\{X_i\}_{i\in\N}$ are i.i.d.~with mean $c_1$. The random variable $(\widetilde{X}_i,\bY_i)$ is then centered under $\q$ where $\widetilde{X}_i=X_i-c_1$. Define $\widetilde{F}_n:=\widetilde{X}_1+\dots+\widetilde{X}_n$. It follows that 
\begin{align*}
    &\hspace{0.5cm}\p(F_n\geq m_n+c,\,\bJ_n\in B_\z(u(n)))\\
    &=e^{n\Lambda(\bl)}\E_{\q}\left[e^{-\bl F_n}\bone_{\{F_n\geq m_n+c,\,\bJ_n\in B_\z(u(n))\}}\right]\\
    &\asymp e^{n\Lambda(\bl)-m_n\bl}\sum_{j=c+1}^\infty e^{-\bl j}\q\left(\widetilde{F}_n\in\Big[ j-1-\frac{3}{2\bl}\,\log n,j-\frac{3}{2\bl}\,\log n\Big),\,\bJ_n\in B_\z(u(n))\right)\\
    &=\rho^{-n}n^{3/2}\sum_{j=c+1}^\infty e^{-\bl j}\q\left(\frac{(\widetilde{F}_n,\bJ_n)}{\sqrt{n}}\in\Big[ \frac{j-1-\frac{3}{2\bl}\,\log n}{\sqrt{n}},\frac{j-\frac{3}{2\bl}\,\log n}{\sqrt{n}}\Big)\times  B_\z\Big(\frac{u(n)}{\sqrt{n}}\Big)\right).
\end{align*}
Note that for $j\geq \sqrt{n}$, we may bound the probability from above by $1$.  For $j\le \sqrt{n}$, the sets
$$E_{j,n}:=\Big[ \frac{j-1-\frac{3}{2\bl}\,\log n}{\sqrt{n}},\frac{j-\frac{3}{2\bl}\,\log n}{\sqrt{n}}\Big)\times  B_\z\Big(\frac{u(n)}{\sqrt{n}}\Big)$$
are bounded as $n\to\infty$. Therefore, by Lemma \ref{lemma:edgeworthok}, with $\mu_{\z,V}$ denoting the centered  Gaussian measure with covariance matrix $V$ on $\R^d$, 
\begin{align}
    \q\left(\frac{(\widetilde{F}_n,\bJ_n)}{\sqrt{n}}\in E_{j,n}\right)=(1+o(1))\mu_{\z,V}\left(E_{j,n}\right)\asymp u(n)^{d-1}n^{-d/2},\label{eq:ejn step}
\end{align}
where the last step follows from the multivariate Gaussian density formula and that the positive definite matrix $V$ depends only on the law of $\bxi$. Altogether this yields
\begin{align*}
    \p(\bJ_n\in B_\z(u(n))\mid F_n\geq m_n)&\ll (n\rho^{-n}e^{-\bl c})^{-1} \rho^{-n}n^{3/2}\sum_{j=c+1}^\infty e^{-\bl j}u(n)^{d-1}n^{-d/2}\\
    &\ll u(n)^{d-1}n^{-(d-1)/2}.
\end{align*}
    This establishes \eqref{eq:ub}.

Using a similar approach, we see that
$$\p(F_n\in[ m_n+c,m_n+c+a(n)])\asymp  n\rho^{-n}e^{-\bl c}$$
and that
\begin{align*}
    &\hspace{0.5cm}\p(\bJ_n\in B_\by(1),~ F_n\in[ m_n+c,m_n+c+a(n)])\\
    &\asymp \rho^{-n}n^{3/2}\sum_{j=c+1}^{c+a(n)} e^{-\bl j}\q\left(\frac{(\widetilde{F}_n,\bJ_n)}{\sqrt{n}}\in\Big[ \frac{j-1-\frac{3}{2\bl}\,\log n}{\sqrt{n}},\frac{j-\frac{3}{2\bl}\,\log n}{\sqrt{n}}\Big)\times  B_{\frac{\by}{\sqrt{n}}}\Big(\frac{1}{\sqrt{n}}\Big)\right)\\
    &\gg \rho^{-n}n^{-(d-3)/2}e^{-\bl c}.
\end{align*}
This proves \eqref{eq:lb}.\footnote{We have implicitly used that $a(n)\geq 1$, but the general case $a(n)\gg 1$ proceeds with the same proof by decomposing $[ m_n+c,m_n+c+a(n)]$ into sub-intervals of smaller length than one.} The same computation also yields
\begin{align*}
    \p(\bJ_n\in B_\z(K\sqrt{n}),~ F_n\geq m_n+c)&\ll \rho^{-n}n^{3/2}n^{-1/2}=n\rho^{-n},
\end{align*}where the asymptotic constant does not depend on $K$. Together with \eqref{eq:BR} completes the proof of \eqref{eq:ub2}. 
\end{proof}

 \begin{proof}[Proof of Lemma \ref{lemma:weak uniform LD}]
We use a similar change of measure argument as in the proof of Proposition \ref{prop:ulb gaussian approx}. Denote by $I$ the large deviation rate function for the $\R^d$-valued random variable $\bxi$ (defined in \eqref{eq:I long}). For $\widehat{\bc}_2:=\nabla I(\bz/(Ck))$, we define the tilted measure $\q$ by $\d \q/\d\p(\bx):=e^{\widehat{\bc}_2\cdot\bx-\Lambda(\widehat{\bc}_2)}$, where we recall $\Lambda=\log\phi_\bxi$ is the log-moment generating function. Under $\q$, each $\bxi_i$ is i.i.d.~with mean $\bz/(Ck)$. It follows that
\begin{align}
    \p((F_{Ck},\bJ_{Ck})\in B_{\bz})&=e^{Ck\Lambda(\widehat{\bc}_2)}\E_\q[e^{-\widehat{\bc}_2\cdot(F_{Ck},\bJ_{Ck})}\bone_{\{(F_{Ck},\bJ_{Ck})\in B_{\bz}\}}]\nonumber\\
    &\gg e^{Ck\Lambda(\widehat{\bc}_2)-\widehat{\bc}_2\cdot\bz}\q\left(\left(\frac{F_{Ck}}{\sqrt{Ck}},\frac{\bJ_{Ck}}{\sqrt{Ck}}\right)\in \frac{B_{\bz}}{\sqrt{Ck}}\right)\nonumber\\
    &= e^{Ck\Lambda(\widehat{\bc}_2)-\widehat{\bc}_2\cdot\bz}\q\left(\left(\frac{\widetilde{F}_{Ck}}{\sqrt{Ck}},\frac{\widetilde{\bJ}_{Ck}}{\sqrt{Ck}}\right)\in  \frac{B_{\z}}{\sqrt{Ck}}\right),\label{eq:tilt2}
\end{align}
where each $(\widetilde{X_i},\widetilde{\bY}_i)=(X_i,\bY_i)-\bz/(Ck)$ is a centered random variable and $(\widetilde{F}_{Ck},\widetilde{\bJ}_{Ck})=\sum_{j=1}^{Ck}(\widetilde{X_i},\widetilde{\bY}_i)$. Using definition of $\widehat{\bc}_2$, we have 
\begin{align}
\Lambda(\widehat{\bc}_2)=\nabla I\Big(\frac{\bz}{Ck}\Big)\cdot\frac{\bz}{Ck}-I\Big(\frac{\bz}{Ck}\Big).\label{eq:Lambda}
\end{align}
In addition, Lemma \ref{lemma:edgeworthok} yields\footnote{Lemma \ref{lemma:edgeworthok} is stated for radially symmetric distributions, but the part that derives \eqref{eq:edgeworth} from \eqref{eq:nonlattice} does not depend on the symmetry and applies to general non-lattice distributions. Note that the non-lattice condition (A5) is preserved under exponential tilts.}
\begin{align}
   \q\left(\left(\frac{\widetilde{F}_{Ck}}{\sqrt{Ck}},\frac{\widetilde{\bJ}_{Ck}}{\sqrt{Ck}}\right)\in  \frac{B_{\z}}{\sqrt{Ck}}\right)\gg (Ck)^{-d/2}.\label{eq:edge2}
\end{align}
Combining \eqref{eq:tilt2}, \eqref{eq:Lambda}, and \eqref{eq:edge2} yields that
$$\p((F_{Ck},\bJ_{Ck})\in B_{\bz})\gg e^{-CkI(\bz/(Ck))}(Ck)^{-d/2}.$$
Recalling that $I$ is convex, $C^1$ on its domain, and $I(\z)=\nabla I(\z)=0$ (by Jensen's inequality, $\E[e^{\bla\cdot\bxi}]$ is minimized at $\bla=\z$ when $\bxi$ is centered), we have uniformly for $\bz$ with $\n{\bz}\leq C_1k$,
$$CI\Big(\frac{\bz}{Ck}\Big)\leq C\sup_{\n{\bx}_2=C_1}I\Big(\frac{\bx}{C}\Big)\leq C_1\sup_{\n{\bx}_2=C_1}\n{\nabla I\Big(\frac{\bx}{C}\Big)}_2\leq \frac{c_3}{2}$$
for $C\geq C_2$ as we pick $C_2$ large enough. 
This concludes the proof. 
 \end{proof}

\begin{proof}[Proof of Proposition \ref{prop:ulb gaussian approx long}]
The Bahadur--Rao theorem (Theorem 3.7.4 of \citep{dembo2009large}) yields that uniformly in $c\in[-C\log n,C\log n]$,
\begin{align}
    \p(F_n\geq x+c)\asymp n^{-1/2}e^{-nI(x/n)-cI'(x/n)}.\label{eq:BR long}
\end{align}
Mimicking \eqref{eq:dq/dp}, define the probability measure $\q$ by
\begin{align}
    \frac{\d\q}{\d\p}(\bx):=e^{\bc_2\cdot\bx-\Lambda(\bc_2)}.\label{eq:dqdp long}
\end{align}
It follows that $\E_\q[\bxi]=(x/n,\z)$. That is, the random variable $(\widetilde{X}_i,\bY_i)$ is then centered under $\q$ where $\widetilde{X}_i=X_i-x/n$. Define $\widetilde{F}_n:=\widetilde{X}_1+\dots+\widetilde{X}_n$. It follows that
\begin{align*}
    &\hspace{0.5cm}\p(F_n\geq x+c,\,\bJ_n\in B_\z(1))\\
    &=e^{n\Lambda(\bc_2)}\E_{\q}\left[e^{-\bc_2 \cdot(F_n,\bJ_n)}\bone_{\{F_n\geq x+c,\,\bJ_n\in B_\z(1)\}}\right]\\
    &\asymp e^{n\Lambda(\bc_2)-\bc_2\cdot (x,\z)}\sum_{j=c+1}^\infty e^{-\bc_2\cdot (j,\z)}\q\left(\widetilde{F}_n\in[ j-1,j),\,\bJ_n\in B_\z(1)\right)\\
    &=e^{-nI(x/n,\z)}\sum_{j=c+1}^\infty e^{-\bc_2\cdot (j,\z)}\q\left(\frac{(\widetilde{F}_n,\bJ_n)}{\sqrt{n}}\in\Big[ \frac{j-1}{\sqrt{n}},\frac{j}{\sqrt{n}}\Big)\times  B_\z\Big(\frac{1}{\sqrt{n}}\Big)\right)\\
    &\asymp e^{-nI(x/n,\z)-\bc_2\cdot (c,\z)}n^{-d/2},
\end{align*}
where in the last step we apply the same argument that leads to \eqref{eq:ejn step} and use that the first coordinate of $\bc_2$ is positive by our assumption $x/n>\underline{c}>0$. Altogether this yields \eqref{eq:ub long}.

Using a similar approach, we see that
$$\p(F_n\in[ x+c,x+c+a(n)])\asymp  n^{-1/2}e^{-nI(x/n)-cI'(x/n)}$$
and that
\begin{align*}
    &\hspace{0.5cm}\p(\bJ_n\in B_\by(1),~ F_n\in[ x+c,x+c+a(n)])\\
    &\asymp e^{n\Lambda(\bc_2)-\bc_2\cdot(x,\by)}
    \sum_{j=c+1}^{c+a(n)} e^{-\bc_2 \cdot(j,\z)}\q\left(\frac{(\widetilde{F}_n,\bJ_n)}{\sqrt{n}}\in\Big[ \frac{j-1}{\sqrt{n}},\frac{j}{\sqrt{n}}\Big)\times  B_{\frac{\by}{\sqrt{n}}}\Big(\frac{1}{\sqrt{n}}\Big)\right)\\
    &\gg e^{-nI(x/n,\z)-\bc_2\cdot (c,\by)}n^{-d/2}.
\end{align*}
This proves \eqref{eq:lb long}.  
\end{proof}

\end{appendix}

\bibliographystyle{imsart-number} 
\bibliography{reference}

\end{document}